\title{Radial solutions of truncated Laplacian  equations  in punctured balls }
\author{Isabeau Birindelli \thanks{isabeau@mat.uniroma1.it}\\
Dipartimento di Matematica, Sapienza Universit\`a\  di Roma
\and
  Fran\c{c}oise Demengel\thanks{francoise.demengel@cyu.fr}\\
  D\'epartement de Math\'ematiques,
CY-Cergy Paris Universit\'e\  
  \and
   Fabiana  Leoni\thanks{leoni@mat.uniroma1.it}\\ 
   Dipartimento di Matematica, Sapienza Universit\`a\  di Roma}
\date{}
\newtheorem{theo}{Theorem}[section]
\newtheorem{prop}[theo]{Proposition}
\newtheorem{rema}[theo]{Remark}
\newtheorem{cor}[theo]{Corollary}
\newtheorem{lemme}[theo]{Lemma}
\def\Pkp{{ \cal P}_k^+} 
\def\Pkm{{ \cal P}_k^-}
\def\Pkpm{{ \cal P}_k^\pm}  
\def\R{\mathbb  R}
\def\ulambda{\underline{\lambda}_\gamma}
\begin{document}
\maketitle

\begin{abstract}
We consider equations involving the truncated laplacians $\Pkpm$ and having   lower order terms with singular potentials posed in punctured balls. We study both the principal eigenvalue problem  and the problem of classification of solutions, in dependence of their asymptotic behavior near the origin, for equations having also superlinear absorbing lower order terms. In case of  $\Pkp$, owing to  the mild degeneracy of the operator, we obtain results which are  analogous to the results for the Laplacian in dimension $k$. On the other hand, for operator $\Pkm$ we show that the strong degeneracy in ellipticity of the operator produces radically different results.
    \end{abstract} 
    
    \section{Introduction}

 This article is concerned with  equations related to the truncated Laplacian in punctured balls and having lower order terms with singular potentials. More precisely, we will consider both the existence of   radial  eigenfunctions for the eigenvalue problem 
\begin{equation}\label{eigen}
\left\{ 
\begin{array} {cl}
\displaystyle {\cal P}_k^\pm  ( D^2 u) + \lambda_\gamma \frac{u}{ r^\gamma} = 0 & \quad \hbox{ in } B(0,1)\setminus \{0\}\\[2ex]
u=0 & \quad \hbox{ on } \partial B(0,1)
\end{array}
\right.\end{equation}
where $\gamma >0$, and the existence and asymptotic behavior near the origin of  radial positive solutions of 
\begin{equation}\label{super}
 {\cal P}_k^\pm  ( D^2 u) + \mu \frac{u}{r^2} = u^p \qquad \hbox{ in } B(0,1)\setminus \{0\}\, ,
 \end{equation}
with $p>1$ and $0<\mu <\lambda_2$. $B(0,1)$ will always denote by the unit open ball centered at zero.

  We will observe that in the case of ${ \cal P}_k^+$ the obtained results are analogous to the results for the Laplacian in dimension $k$, and,  in particular,  for $\gamma >2$, the principal eigenvalue $\lambda_\gamma$ is $0$, while the obtained results for  ${ \cal P}_k^-$  deeply differ from the previous cases, and, in particular, the principal eigenvalue 
  $\lambda_\gamma$ is $+\infty $ for any $\gamma\geq0$. 
\medskip

   The operators $\Pkpm$, often referred to as  truncated Laplacians, are defined in the following way: for a symmetric matrix $X$, whose ordered eigenvalues are $\lambda_1\leq \lambda_2\cdots\leq \lambda_N$, and for an integer value $1\leq k\leq N$, one has
 $$\Pkm(X) :=\sum_{i=1}^k \lambda_i\ \ \mbox{and }\ \Pkp(X) :=\sum_{i=N-k+1}^N \lambda_i\, .$$
 When evaluated on the hessian matrix of the unknown function, the truncated laplacians are second order, degenerate elliptic operators which reduce to the standard laplacian for $k=N$. Because of their numerous applications, they have attracted some attention both in the PDE and the geometry world.  In particular,  in geometry they appear naturally when  considering manifolds of partially positive curvature, see \cite{Sha, Wu}, or  in problem concerning mean curvature flow in arbitrary codimension, see \cite{AS}. In their treatment of PDE from a  convex analysis point of view, Harvey and Lawson have studied the truncated Laplacian in different articles, see \cite{HL1,HL2}. For other results in PDE we refer e.g. to \cite{BG19, BGI18,  BGL,  CLN, G, GV, OS}.
Closer to the present work is \cite{BGI21}, where Birindelli, Galise and Ishii  introduced for $\Pkpm$ the notion of generalized eigenvalue  \`a la Berestycki, Nirenberg, Varadhan, see \cite{BNV}. In particular proving the existence of a positive eigenfunction when $k=1$ for strictly convex domains. For other $k>1$ the question of whether there exists or not an eigenfunction  is still open, even  for general strictly convex domains.  We will see that even here the case $k=1$ and the case $k>1$ differ in nature.
\medskip

 In the present paper we consider ${ \cal C}^2$ \emph{radial} solutions of problems \eqref{eigen} and \eqref{super}. As it is well known,  the eigenvalues of the hessian matrix of a smooth radial function $u(r)$ for $r>0$ are given by $\displaystyle \frac{u^\prime(r)}{r}$, with multiplicity $N-1$, and $u^{\prime\prime}(r)$. Therefore, for $1\leq k<N$, we get
 $$
 \Pkp(D^2u)= \left\{
 \begin{array}{ll}
 \displaystyle u^{\prime\prime} +(k-1)\frac{u^\prime}{r} &  \displaystyle \hbox{ if } u^{\prime\prime}\geq \frac{u^\prime}{r}\\[2ex]
\displaystyle k\frac{u^\prime}{r}&  \displaystyle \hbox{ if } u^{\prime\prime}\leq \frac{u^\prime}{r}
\end{array}\right.
$$
 and, analogously,
 $$
 \Pkm(D^2u)= \left\{
 \begin{array}{ll}
 \displaystyle u^{\prime\prime} +(k-1)\frac{u^\prime}{r} &  \displaystyle \hbox{ if } u^{\prime\prime}\leq \frac{u^\prime}{r}\\[2ex]
\displaystyle k\frac{u^\prime}{r}&  \displaystyle \hbox{ if } u^{\prime\prime}\geq \frac{u^\prime}{r}
\end{array}\right.
$$
So, when applied to smooth radial functions, the operators $\Pkpm$ switch from something like the laplacian in dimension $k$ to a first order operator, depending on the sign of $u^{\prime\prime}-\frac{u^\prime}{r}$. In order to focus on the strictly degenerate cases,  we will always suppose that $k<N$.
 
 When considering problems \eqref{eigen}, we are interested in proving the existence of positive solutions. In this case, the constants $\lambda_\gamma$ are referred to as the \emph{principal eigenvalues} associated with the operators $\Pkpm$ and the potential $r^{-\gamma}$ in the punctured ball. 
 
 The analogous problem for fully nonlinear uniformly elliptic operators has been recently studied in 
 \cite{BDL1}, where we considered in particular the eigenvalue problem 
$$\left\{ 
\begin{array} {cl}
\displaystyle {\cal M}^\pm  ( D^2 u) + \lambda_\gamma \frac{u}{ r^\gamma} = 0 & \quad \hbox{ in } B(0,1)\setminus \{0\}\\[2ex]
u=0 & \quad \hbox{ on } \partial B(0,1)
\end{array}
\right.
$$
Here, ${\cal M}^\pm$ are the Pucci's extremal operators (see \cite{CC}).   When $\gamma <2$, we proved that the principal eigenvalue is strictly positive and finite and  there  exist positive eigenfunctions which  can be  extended  by continuity to the whole ball. Moreover, the eigenfunctions  are  showed to be of class ${ \cal C}^1$ when $\gamma <1$, and Hoelder continuous  with exponent $2-\gamma$ if $\gamma \geq 1$. 
 In the case $\gamma =2$, we showed the non-existence of bounded super-solutions  and we  provided the explicit expression both of  eigenfunctions and the eigenvalues
        $\lambda_2$, thus extending to the fully nonlinear setting the explicit value of the best constant in Hardy's inequality. Finally, for $\gamma >2$, we proved that $\lambda_\gamma=0$.
  
 For problem \eqref{eigen} considered here, we obtain similar results in case of the operator $\Pkp$ and radically different results for the operator $\Pkm$. 
 
 For both operators, as in \cite{BGI18}, we consider the Berestycki,  Nirenberg and Varadhan constants $\lambda_\gamma$, see definitions \eqref{eigenbnv+} and \eqref{eigenbnv-}, as  good candidates to be the principal eigenvalues.
 
 In case of operator $\Pkp$, we attack the problem by first observing that if a positive radial eigenfunction $u$ related to a positive eigenvalue exists, then  it must satisfy $u''\geq \frac{u'}{r}$, so that $u$ actually is a radial eigenfunction for Laplace operator in dimension $k$. This property suggests to apply, in order to prove the existence of eigenfunctions, the same strategy used in the uniformly elliptic case. For $\gamma<2$ and $k\geq2$, after proving the validity of the maximum principle "below" $\lambda_\gamma$, the strict positivity of $\lambda_\gamma$ and the existence of positive eigenfunctions are obtained by a standard approximation procedure from below. When $k\geq 3$, we can let $\gamma\to 2$ and, by using the stability properties of the principal eigenvalues as well as their variational formulation in case of linear operators, we obtain the explicit expressions
 $$
 \lambda_2=\left(\frac{k-2}{2}\right)^2\, ,\qquad u(r)=\frac{-\ln r}{r^{\frac{k-2}{2}}}\, ,
 $$
 which are nothing but the well known expressions respectively of the first eigenvalue and of the eigenfunction  for Laplace operator in dimension $k$.
 
 We further show that $\lambda_\gamma=0$ for $\gamma>2$ and, in the separately considered case
  $k=1$, we prove that $\lambda_\gamma>0$ for $\gamma<1$ and $\lambda_\gamma=0$ for $\gamma\geq 1$.
  
 On the other hand,  when considering problem \eqref{eigen} in case of operator $\Pkm$, we see how crucial the degeneracy of ellipticity can be. Indeed, for any $\gamma\geq 0$, when $1\leq k<N$, we show \emph{for  all} $\mu>0$  the existence of explicit positive radial solutions of
 \begin{equation}\label{Pmmu}
  \Pkm(D^2v)+\mu \frac{v}{r^\gamma}=0\qquad \hbox{ in } B(0,1)\setminus \{0\}\, ,
 \end{equation}
proving that 
$$\lambda_\gamma=+\infty\qquad \hbox{ for all } \gamma\geq 0\, .$$
For equation \eqref{Pmmu} we also provide several sufficient conditions for the validity of the maximum principle, see Proposition \ref{maxpPk-}.
\medskip

Next, let us consider the existence of radial positive solutions of equation \eqref{super}  and their asymptotic behavior near the origin. The analogous problem for uniformly elliptic operators has been recently studied in \cite{BDL2}, where we considered solutions related to Pucci's operators, namely positive radial functions $u$ satisfying
$$
 {\cal M}_k^\pm  ( D^2 u) + \mu \frac{u}{r^2} = u^p \qquad \hbox{ in } B(0,1)\setminus \{0\}\, .
 $$
 The results obtained in \cite{BDL2} extend to the fully nonlinear framework the results obtained  by Cirstea \cite{Ci} and Cirstea and Du \cite{CDu2} in the case of the laplacian, that is for the equation
 \begin{equation}\label{semilinear}
 \Delta u + \mu \frac{u}{r^2} = u^p \qquad \hbox{ in } B(0,1)\setminus \{0\}\, .
 \end{equation}
 Let us recall that, when $0<\mu<\left(\frac{N-2}{2}\right)^2$, there are three natural growth exponents associated with solutions of the above equation. 
 The first one is the scaling invariance exponent $\frac{2}{p-1}$, and the other two are given by the roots of the equation
 $$
 x^2-(N-2)x+\mu=0\, ,
 $$
 namely 
$$
\tau^{\pm} \, := \frac{N-2}{2} \pm \sqrt{\left(\frac{N-2}{2}\right)^2-\mu}\, .
$$
 The functions $\frac{1}{r^{\tau^\pm}}$ are solutions of the homogeneous equation
 $$
 \Delta u + \mu \frac{u}{r^2} = 0 \qquad \hbox{ in } B(0,1)\setminus \{0\}\, ,
 $$
and they appear as asymptotic growth exponents of solutions $u$ of equation \eqref{semilinear} when
 the superlinear term $u^p$ becomes   negligible with respect to the singular term $\mu \frac{u}{r^2}$, or, in other words, when solutions are asymptotic to the solutions of the linearized equation at zero.

On the other hand, when either $\frac{2}{p-1}<\tau^-$ or $\frac{2}{p-1}>\tau^+$, equation \eqref{semilinear} has the exact solution
$$
u(r)=\frac{C_p}{r^{\frac{2}{p-1}}}\, ,\qquad C_p= \left[ \left(\frac{2}{p-1}-\tau^-\right) \left(\frac{2}{p-1}-\tau^+\right)\right]^{\frac{1}{p-1}}\, .
$$
 The asymptotic analysis of solutions of equation \eqref{semilinear} reveals the following classification, see \cite{BDL2,Ci, CDu1}: 
 \begin{itemize}
 \item[-] if $\frac{2}{p-1}> \tau^+$, then solutions $u$ satisfy either $u(r)r^{\frac{2}{p-1}}\to C_p$, or $u(r)r^{\tau^-}\to c_1$, or $u(r)r^{\tau^+}\to c_2$ as $r\to 0$, for some positive constants $c_1$ and $c_2$; 
 \item[-] if $\tau^-<\frac{2}{p-1}\leq \tau^+$, then any solution $u$ satisfies $u(r)r^{\tau^-}\to c_1$ as $r\to 0$, for some $c_1>0$ ; 
 \item[-] if $\frac{2}{p-1}=\tau^-$, then any solution $u$ satisfies $u(r)r^{\tau^-}(-\ln r)^{\frac{\tau^-}{2}}\to c_3$ as $r\to 0$, for some $c_3>0$ ; 
 \item[-] if $\frac{2}{p-1}<\tau^-$, then any solution $u$ satisfies $u(r)r^{\frac{2}{p-1}}\to C_p$ as $r\to 0$.
 \end{itemize}
 When considering solutions of equation \eqref{super} with the truncated laplacians, once again the pictures of the results for $\Pkp$ and $\Pkm$ are different. 
 
 For operator $\Pkp$, we assume   $k\geq 3$ and  we obtain exactly the same classification result as above with the exponents $\tau^\pm$ now defined with $N$ replaced by $k$. This is due to the fact that our analysis in the end shows that solutions of equation \eqref{super} with $\Pkp$ satisfy $u''\geq 0\geq u'$, so that they are indeed solutions of the semilnear equation \eqref{semilinear} in dimension $k$. Let us emphasize in particular that, for $\frac{2}{p-1}>\tau^+$, multiple asymptotic behaviors actually occur for solutions of equation \eqref{super} with $\Pkp$.
  
This non uniqueness phenomenon never occurs for operator $\Pkm$. Indeed, our analysis for equation \eqref{super} with $\Pkm$ shows that solutions always satisfy the associated ordinary first order equation, and then we deduce the  explicit expression of solutions, at least for $r>0$ sufficiently small. More precisely, assuming that $2\leq k\leq N-1$, we prove that for $r>0$ small enough, one has
\begin{itemize}
\item[-] if $\frac{2}{p-1}\neq \frac{\mu}{k}$, then  any solution $u$ is of the form
$$
u(r) =\frac{1}{\left( c\, r^{\frac{\mu (p-1)}{k}}- \frac{r^2}{\frac{2k}{p-1}-\mu}\right)^{\frac{1}{p-1}}}
$$
for some $c\in \R$;
\item[-] if $\frac{2}{p-1}= \frac{\mu}{k}$, then  any solution $u$ is of the form
$$
u(r)= \frac{1}{r^{\frac{2}{p-1}}\left(c+\frac{(p-1)}{k}(-\ln r)\right)^{\frac{1}{p-1}}}
$$
for some $c\in \R$.
\end{itemize}
Hence, in this case we obtain the following classification of the possible asymptotic behavior as $r\to 0$ for solutions:
\begin{itemize}
\item[-] if $\frac{2}{p-1}>\frac{\mu}{k}$, then $u(r)r^{\frac{\mu}{k}}\to c_1>0$;
\item[-] if $\frac{2}{p-1}=\frac{\mu}{k}$, then $u(r)r^{\frac{\mu}{k}}(-\ln r)^{\frac{\mu}{2k}}\to c_2>0$;
\item[-] if $\frac{2}{p-1}<\frac{\mu}{k}$, then $u(r)r^{\frac{2}{p-1}}\to c_3>0$.
\end{itemize}

 In conclusion, we can say that  the results of the present paper in a sense complement those contained in \cite{BDL1, BDL2} with respect to the uniform ellipticity assumption. Indeed, on the one hand the results we obtain here for operator $\Pkp$ show that the uniform ellipticity assumption is not necessary, and in some cases  partial analogous conclusions can still be obtained. On the other hand, the present results for operator $\Pkm$ show that a kind of  control on the ellipticity of the operator is in general necessary, since in case of  strong degeneracy   for the operator the results for both problems \eqref{eigen} and \eqref{super} can be radically different.

%%%%%%%%%%%%%%%%%%%%%%%%%%%%%%%%%%%%%%%%%%%%%%%%%%%%%%%%%%%%%%%%%%%%%%%%%%%%%%%%%%%%%%%%%%%%%%%%%%%%%%%%%%%%%%%%%%%%%%%%%%%%%%%%%%%%%%%%%%%%%%%%%%%%%%%%%%%%%%%%%%%%%%%%%%%%%%%%

 \section{Preliminaries} 
 We begin by stating some technical results that will be used many times in the proofs of this paper. Let us recall that  by $B(0,1)$ we denote the unit ball of $\R^N$ centered at zero.
  
      \begin{lemme}\label{lem1}
Let $u\in { \cal C}^2(B(0,1)\setminus \{0\})$ be a radial function. Then:
\begin{itemize}
\item[{\rm (i)}] if $\Pkp (D^2u)\leq 0$  in $B(0,1)\setminus \{0\}$ with $k\leq N-1$, then $u^\prime\leq 0$ in $B(0,1)\setminus \{0\}$;
\item[{\rm (ii)}] if $\Pkm (D^2u)\leq 0$  in $B(0,1)\setminus \{0\}$, then $u^\prime$ has constant sign in a neighborhood of zero. Moreover, if $k\geq2$ and $u$ is bounded from below, then $u^\prime\leq 0$  in a neighborhood of zero and,  if $\Pkm(D^2u)<0$ in $B(0,1)\setminus \{0\}$, then
 $u^\prime\leq 0$ in $B(0,1)\setminus \{0\}$.
\end{itemize}
 \end{lemme}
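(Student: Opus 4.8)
The plan is to work directly with the pointwise formulas for $\Pkpm(D^2u)$ on smooth radial functions recalled in the Introduction (available precisely because $k<N$), which exhibit these operators as either the $k$‑dimensional radial Laplacian $u''+(k-1)\frac{u'}{r}$ or the first order operator $k\frac{u'}{r}$, according to the sign of $u''-\frac{u'}{r}$. For \emph{part (i)} I would argue by contradiction: suppose $u'(r_0)>0$ at some $r_0\in(0,1)$. If $u''(r_0)\le \frac{u'(r_0)}{r_0}$, then $\Pkp(D^2u)(r_0)=k\frac{u'(r_0)}{r_0}>0$; if instead $u''(r_0)\ge\frac{u'(r_0)}{r_0}$, then $\Pkp(D^2u)(r_0)=u''(r_0)+(k-1)\frac{u'(r_0)}{r_0}\ge k\frac{u'(r_0)}{r_0}>0$. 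In both cases this contradicts $\Pkp(D^2u)\le0$, whence $u'\le0$ on $B(0,1)\setminus\{0\}$.

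For \emph{part (ii)} the key remark is that on the open set $\{r:\ u'(r)>0\}$ one necessarily has $u''(r)<\frac{u'(r)}{r}$ — otherwise $\Pkm(D^2u)(r)=k\frac{u'(r)}{r}>0$, which is impossible — so there $\Pkm(D^2u)=u''+(k-1)\frac{u'}{r}$, i.e. $\big(r^{k-1}u'(r)\big)'=r^{k-1}\,\Pkm(D^2u)(r)\le0$. Hence $r\mapsto r^{k-1}u'(r)$ is non‑increasing on each connected component of $\{u'>0\}$, and, $u$ being $C^2$, this monotonicity extends by continuity up to the endpoints of the component. From this I would deduce that $u'$ cannot vanish just to the left of a point where it is positive: if $u'(\bar r)>0$ and $r_1:=\sup\{r\in(0,\bar r):\ u'(r)\le0\}$ were positive, then $u'(r_1)=0$ and $u'>0$ on $(r_1,\bar r]$, so $r^{k-1}u'$ would be non‑increasing on $[r_1,\bar r]$ with value $0$ at $r_1$, contradicting $r^{k-1}u'(r)>0$ on $(r_1,\bar r]$. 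Therefore $u'(\bar r)>0$ forces $u'>0$ on all of $(0,\bar r)$; in particular, either $u'\le0$ on $B(0,1)\setminus\{0\}$ or $u'>0$ on some $(0,\bar r)$, so $u'$ has constant sign in a neighborhood of the origin.

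Next, assuming $k\ge2$ and $u$ bounded from below, I would rule out the second alternative: if $u'>0$ on $(0,\bar r)$, then the monotonicity of $r^{k-1}u'$ gives $u'(r)\ge \bar r^{\,k-1}u'(\bar r)\,r^{1-k}$ on $(0,\bar r)$ with positive constant $\bar r^{\,k-1}u'(\bar r)$, so $u(r)=u(\bar r)-\int_r^{\bar r}u'(s)\,ds\to-\infty$ as $r\to0^+$, because $\int_0 s^{1-k}\,ds=+\infty$ for $k\ge2$ — contradicting boundedness from below. Hence $u'(\bar r)\le0$ for every $\bar r$, which yields $u'\le0$ on $B(0,1)\setminus\{0\}$, in particular near the origin. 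Since this argument already gives the global conclusion, it applies a fortiori in the strict case $\Pkm(D^2u)<0$, so there too $u'\le0$ on $B(0,1)\setminus\{0\}$.

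The only delicate point is the branch bookkeeping in part (ii): one must check that on $\{u'>0\}$ the operator is unambiguously the second order expression, and that the monotonicity of $r^{k-1}u'$ propagates to the boundary points of the components of $\{u'>0\}$ by the $C^2$ regularity — this is precisely what powers the sign‑propagation argument. Everything else, namely the identity $\big(r^{k-1}u'\big)'=r^{k-1}\big(u''+(k-1)\tfrac{u'}{r}\big)$ and the divergence of $\int_0 s^{1-k}\,ds$ for $k\ge2$, is routine.
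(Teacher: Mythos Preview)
Your proof is correct and, for part (i), essentially identical to the paper's (the paper condenses your two cases into the single inequality $\Pkp(D^2u)\ge k\,u'/r$). For part (ii) you share the same core ingredients with the paper --- the branch observation that $u'>0$ forces the second--order expression, the identity $(r^{k-1}u')'=r^{k-1}\Pkm(D^2u)$, and the divergence of $\int_0 s^{1-k}\,ds$ for $k\ge2$ --- but you organize them differently: instead of the paper's ``no positive bump between two zeros of $u'$\,'' argument (which produces a point with $u''>0$ and $u'>0$), you run a monotonicity--based sign propagation showing that $u'(\bar r)>0$ forces $u'>0$ on all of $(0,\bar r)$. A pleasant byproduct is that your argument already yields the \emph{global} conclusion $u'\le0$ on $B(0,1)\setminus\{0\}$ under the hypotheses $k\ge2$ and $u$ bounded below, so the paper's separate treatment of the strict case $\Pkm(D^2u)<0$ becomes redundant in your approach.
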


 \begin{proof}
 For the proof of (i), it is enough to observe that, when  $k\leq N-1$, one has
 $$
 \Pkp(D^2u)\geq k\, \frac{u'}{r}\, .
 $$
 In order to prove (ii), we remark that, if $\Pkm(D^2u)\leq 0$, then  there cannot exist $0<s<t<1$ such that $u'(s)=u'(t)=0$ and $u'(r)>0$ for $s<r<t$. Indeed, in such a case, there would exists $\sigma\in (s,t)$ such that $u''(\sigma)>0$ and this would imply $\Pkm(D^2u(\sigma))>0$, against the assumption. This property implies that $u'(r)$ cannot change sign indefinitely  for $r$ in a neighborhood of zero.
 
Next, let us assume that $k\geq 2$, $u$ is bounded from below and, by contradiction, $u'(r)>0$ for $r$ small enough. Then, for $r$ in a right neighborhood of zero, it must be $u''(r)<0$ and
 $$
0\geq  \Pkm(D^2u)= u'' + (k-1)\, \frac{u'}{r}= \frac{(r^{k-1}u'(r))'}{r^{k-1}}\, .
 $$
Hence, the function $u^\prime(r) r^{k-1}$ is monotone non increasing and strictly positive for $r$ small enough.
This implies that, for some positive constant $c>0$ and for $r$ sufficiently small, one has
$$
u(r)\leq \left\{ \begin{array} {ll}
\displaystyle -\frac{c}{r^{k-2}} & \hbox{ if } k>2\\[1ex]
c \, \ln r & \hbox{ if } k=2
\end{array}\right.
$$
Letting $r\to 0$, we obtain a contradiction to the boundedness from below of $u$.

Hence, we proved that $u'(r)\leq 0$ for $r>0$ small enough. Assume now that, in addition,  $\Pkm(D^2u)<0$ in $B(0,1)\setminus \{0\}$. If, by contradiction, there exists $r>0$ such that $u'(r)>0$, then there exists also $s\in (0,1)$ such that $u'(s)=0$ and $u'(r)>0$ for $r>s$ close to $s$. But, then, $u''(s)\geq 0$, which implies $\Pkm(D^2u)(s)\geq 0$, in contrast with the strict inequality satisfied by $\Pkm(D^2u)$.

 \end{proof}

 \begin{rema} \label{remsign} {\rm 
 By using the relationship
 $$\Pkp(D^2(-u))=-\Pkm(D^2u)\, ,
 $$
 we immediately deduce from Lemma \ref{lem1} the symmetric statements for sub solutions, namely
 \begin{itemize}
\item[{\rm (i)}] if $\Pkm (D^2u)\geq 0$  in $B(0,1)\setminus \{0\}$ with $k\leq N-1$, then $u^\prime\geq 0$ in $B(0,1)\setminus \{0\}$;
\item[{\rm (ii)}] if $\Pkp (D^2u)\geq 0$  in $B(0,1)\setminus \{0\}$, then $u^\prime$ has constant sign in a neighborhood of zero. Moreover, if $k\geq2$ and $u$ is bounded from above, then $u^\prime\geq 0$  in a neighborhood of zero and,  if $\Pkp(D^2u)>0$ in $B(0,1)\setminus \{0\}$, then
 $u^\prime\geq 0$ in $B(0,1)\setminus \{0\}$.
\end{itemize}}
 \end{rema}

Our next result is  more specific: we focus on solutions with a monotone right hand side.
\begin{lemme}\label{lem1.1}
Let $g\in C^1(B(0,1)\setminus \{0\})$ be a radial function,  satisfying $g(r)< 0$ and $g^\prime(r)\geq0$ for $0<r<1$. Then:
\begin{itemize}
\item[{\rm (i)}] if  $u\in C^2(B(0,1)\setminus \{0\})$ is a radial bounded from below solution of
$$
\Pkp(D^2u)=g(r) \qquad \hbox{ in } B(0,1)\setminus \{0\}\, ,
$$
then $u^\prime \leq 0$ and   $u^{ \prime \prime}- { u^\prime \over r}\geq 0$ in $B(0,1)$;
\item[{\rm (ii)}] if  $u\in C^2(B(0,1)\setminus \{0\})$ is a radial bounded from below solution of
$$
\Pkm(D^2u)=g(r) \qquad \hbox{ in } B(0,1)\setminus \{0\}\, ,
$$
with $k\geq 2$, then $u^\prime \leq 0$ and   $u^{ \prime \prime}- { u^\prime \over r}$ has constant sign in a neighborhood of zero.
\end{itemize}
\end{lemme}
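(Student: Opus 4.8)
The plan is to exploit the fact, recalled in the Introduction, that on radial functions $\Pkp(D^2u)=\max\{u''+(k-1)\tfrac{u'}{r},\,k\tfrac{u'}{r}\}$ and $\Pkm(D^2u)=\min\{u''+(k-1)\tfrac{u'}{r},\,k\tfrac{u'}{r}\}$, so that the sign of $u''-\tfrac{u'}{r}$ decides which of the two branches realizes the operator. On the branch where the first order term $k\tfrac{u'}{r}$ is selected, the equation $\Pkpm(D^2u)=g$ reduces to the pointwise identity $u'(r)=\tfrac{r\,g(r)}{k}$, which can be differentiated; the monotonicity assumption $g'\ge 0$ will then make the two cases incompatible and force the desired sign on $u''-\tfrac{u'}{r}$. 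Throughout, $u'\le 0$ will come for free from Lemma~\ref{lem1}.

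For part (i): since $g<0$ we have $\Pkp(D^2u)\le 0$, hence $u'\le 0$ by Lemma~\ref{lem1}(i) (recall $k\le N-1$). I would then argue by contradiction on the open set $A:=\{r\in(0,1):u''(r)<\tfrac{u'(r)}{r}\}$: there the maximum defining $\Pkp$ equals $k\tfrac{u'}{r}$, so $u'(r)=\tfrac{r\,g(r)}{k}$ on $A$; differentiating this identity on any connected component of $A$ (allowed since $g\in C^1$) gives $u''-\tfrac{u'}{r}=\tfrac{r\,g'(r)}{k}\ge 0$, contradicting the definition of $A$. Hence $A=\emptyset$, i.e.\ $u''\ge\tfrac{u'}{r}$ in $B(0,1)\setminus\{0\}$.

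For part (ii): now $\Pkm(D^2u)=g<0$, so Lemma~\ref{lem1}(ii) (using $k\ge 2$ and the lower bound on $u$) gives $u'\le 0$ near $0$. For the sign of $h:=u''-\tfrac{u'}{r}$, the idea is to track the auxiliary function $\varphi(r):=u'(r)-\tfrac{r\,g(r)}{k}$, which is continuous on $(0,1)$. On $\{h\ge 0\}$ one has $\Pkm(D^2u)=k\tfrac{u'}{r}=g$, so $\varphi\equiv 0$ there; on the open set $N:=\{h<0\}$ the equation reads $u''+(k-1)\tfrac{u'}{r}=g$, and a short computation gives, on $N$,
\[
h=-\frac{k}{r}\,\varphi,\qquad\qquad \varphi'=-\frac{k-1}{r}\,\varphi-\frac{r\,g'(r)}{k}.
\]
Thus $\varphi>0$ on $N$ and, since $k\ge 2$, $r>0$ and $g'\ge 0$, also $\varphi'<0$ on $N$. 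If $h$ changed sign in every neighbourhood of $0$, then $N$ would contain a connected component $(a,b)$ with $0<a<b\le 1$ (a component $(0,b)$ would force $h<0$ on $(0,b)$, hence constant sign); but $\varphi(a)=0$ because $h(a)\ge 0$, while $\varphi$ is continuous at $a$ and strictly decreasing on $(a,b)$, forcing $\varphi<0$ on $(a,b)$ against $\varphi>0$ on $(a,b)\subseteq N$. This contradiction shows that $h=u''-\tfrac{u'}{r}$ has constant sign in a neighbourhood of $0$.

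I expect the main obstacle to be exactly the sign analysis in (ii): in contrast with (i), where the equation directly pins down $u'$ on the ``bad'' set, in (ii) the bad set $\{u''<u'/r\}$ carries a genuine second order ODE, and one has to extract from it the strict monotonicity of $\varphi$ and reconcile it with the boundary value $\varphi=0$ at the endpoints of its components. This rigidity — precisely where the hypotheses $k\ge 2$ and $g'\ge 0$ are essentially used — is what prevents the sign of $u''-\tfrac{u'}{r}$ from oscillating near the origin.
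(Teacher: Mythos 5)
Your proof is correct and follows essentially the same route as the paper's: both parts argue by contradiction on the open set where $u''-\tfrac{u'}{r}$ has the unwanted sign, reduce the equation there to an explicit differential relation, and differentiate using $g'\geq 0$ to derive a sign contradiction. In part (ii) the paper works directly with $h=u''-\tfrac{u'}{r}$, showing $h'=g'-\tfrac{k}{r}h>0$ on a bad component $(s,t)$ where $h<0$ and $h(s)=h(t)=0$, which is a marginally shorter formulation than your auxiliary $\varphi=u'-\tfrac{rg}{k}=-\tfrac{r}{k}h$, but the content is identical.
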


\begin{proof} (i) The inequality $u'\leq 0$ follows from Lemma \ref{lem1} (i) in case $k\leq N-1$, and from Lemma \ref{lem1} (ii) in case $k=N$.
Next, let us assume by contradiction that $u^{ \prime \prime}- { u^\prime \over r}<0$ in some interval contained in $(0,1)$. Then, from the equation   we get that   in such an  interval $u$ satisfies
 $k\frac{u^\prime(r)}{r}=g(r)$ and, by differentiating,  we obtain the contradiction
 $$0>\frac{k}{r} \left(u^{ \prime \prime}- { u^\prime \over r}\right)=g'(r)\geq 0\, .$$
 (ii) The inequality $u'\leq 0$ follows from Lemma \ref{lem1} (ii). Next, assuming by contradiction that $u''-\frac{u'}{r}$ changes sign infinitely many times as $r\to 0$, there exists an interval $(s,t)\subset (0,1)$ such that $u''(r)-\frac{u'(r)}{r}<0$ for $r\in (s,t)$ and $u''(s)-\frac{u'(s)}{s}=u''(t)-\frac{u'(t)}{t}=0$. Then, in the interval $(s,t)$, $u$ satisfies
 $$
 u''-\frac{u'}{r}=g-k \frac{u'}{r} \, ,
 $$
 and then, by differentiation,
 $$
 \left(u''-\frac{u'}{r}\right)' = g'(r)- \frac{k}{r} \left( u''-\frac{u'}{r}\right)>0\, ,
 $$
 in contradiction with the boundary conditions $u''(s)-\frac{u'(s)}{s}=u''(t)-\frac{u'(t)}{t}=0$.

\end{proof}

As an immediate consequence of Lemma \ref{lem1} and of Lemma \ref{lem1.1}, we deduce the following result for solutions of the eigenvalue problem for operator $\Pkp$.

\begin{cor}\label{corollary}
For $\gamma\geq 0$ and $\mu > 0$, let  $u\in C^2(B(0,1)\setminus \{0\})$ be a radial positive solution of
$$
\Pkp(D^2u)+\mu \frac{u}{r^\gamma}=0 \qquad \hbox{ in } B(0,1)\setminus \{0\}\, .
$$
Then, $u^\prime \leq 0$ and   $u^{ \prime \prime}- { u^\prime \over r}\geq 0$ in $B(0,1)\setminus \{0\}$.
\end{cor}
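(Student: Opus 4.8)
The plan is to reduce everything to Lemma~\ref{lem1.1}(i) by a short two‑step bootstrap. First I would set
$$g(r):=-\mu\,\frac{u(r)}{r^\gamma}\, ,$$
so that $u$ is a radial $C^2$ solution of $\Pkp(D^2u)=g(r)$ in $B(0,1)\setminus\{0\}$. Since $u>0$, $\mu>0$ and $\gamma\ge 0$, we have $g(r)<0$ for $0<r<1$; in particular $\Pkp(D^2u)\le 0$, and because $k\le N-1$ Lemma~\ref{lem1}(i) immediately gives $u'\le 0$ in $B(0,1)\setminus\{0\}$. This already establishes the first assertion of the corollary.

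The second step is to upgrade this sign information into monotonicity of the right‑hand side $g$. Since $u\in C^2$, the function $g$ belongs to $C^1(B(0,1)\setminus\{0\})$, and
$$g'(r)=\mu\,\frac{\gamma\,u(r)-r\,u'(r)}{r^{\gamma+1}}\, .$$
Using $u>0$, the inequality $u'\le 0$ just obtained, and $\gamma\ge 0$, we conclude $g'(r)\ge 0$ for $0<r<1$. Hence $g$ satisfies exactly the hypotheses of Lemma~\ref{lem1.1}: it is of class $C^1$, strictly negative, and nondecreasing on $(0,1)$.

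Finally, $u$ is a radial $C^2$ solution of $\Pkp(D^2u)=g(r)$ which is bounded from below (being positive), so Lemma~\ref{lem1.1}(i) applies and yields $u''-\frac{u'}{r}\ge 0$ in $B(0,1)\setminus\{0\}$, which completes the proof. There is no real obstacle here; the only point requiring care is the \emph{order} of the argument. The inequality $g'\ge 0$ is not available a priori — it becomes true precisely once $u'\le 0$ has been derived from Lemma~\ref{lem1}(i). So the corollary is genuinely a two‑step consequence: first monotonicity of $u$, then monotonicity of the potential term $g$, and only then the conclusion of Lemma~\ref{lem1.1}.
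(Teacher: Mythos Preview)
Your proof is correct and follows exactly the route the paper intends: the corollary is stated as ``an immediate consequence of Lemma~\ref{lem1} and of Lemma~\ref{lem1.1}'', and your two-step bootstrap (first $u'\le 0$ from Lemma~\ref{lem1}(i), then $g'\ge 0$, then Lemma~\ref{lem1.1}(i)) is precisely how that immediacy is to be read. The standing assumption $k<N$ in the paper justifies your appeal to Lemma~\ref{lem1}(i).
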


Our last preliminary result is a comparison principle for radial solutions in the punctured ball satisfying a suitable growth condition around the singularity. The proof, based merely on the ellipticity property of the principal part and on the superlinear character of the absorbing zero order term, is given in \cite{BDL2} for Pucci's extremal operator $\mathcal{M}^+$, but it applies exactly in the same way to both operators ${\cal P}_k^\pm$.

\begin{theo}\label{comparison}
Let  $\mu>0$, $p>1$, $r_0>0$ and let $u$ and $v$ be two  positive radial  functions in $C^2\left(B_{r_0}\setminus \{0\}\right)$ satisfying
$$
{\cal P}_k^\pm(D^2u)+\mu \frac{u}{r^2} -u^p\geq 0 \geq {\cal P}_k^\pm(D^2v) +\mu \frac{v}{r^2} -v^p\quad \hbox{ in } B_{r_0}\setminus \{0\}\, .
$$
Assume that $u\, ,\ v \in C\left( \overline{B_{r_0}}\setminus \{0\}\right)$ and that
 there exist positive constants $c_1, c_2$ such that 
 $$ c_1 r^{-2\over p-1}\leq  v(r)\quad \hbox{ and }\quad   u(r)\, , v(r) \leq c_2 r^{-2\over p-1}\quad \hbox{  for }\  0<r\leq r_0\, .$$
 If $u(r_0)\leq v(r_0)$, then $u\leq v$ in $\overline{B_{r_0}}\setminus \{0\}$.
             \end{theo}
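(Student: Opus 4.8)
The plan is to use the standard device of comparing $u$ not directly with $v$ but with $\theta v$ for $\theta$ slightly above $1$, so that the superlinearity of the absorbing term makes $\theta v$ a \emph{strict} supersolution, and then to show that the smallest admissible $\theta$ is $1$. Set
$$
\theta^\star:=\inf\left\{\theta\ge 1\ :\ u\le\theta\, v\ \hbox{ in }\ \overline{B_{r_0}}\setminus\{0\}\right\}\, ,
$$
which is finite, $\theta^\star\le\max\{1,c_2/c_1\}$, by the two sided bounds; by continuity $u\le\theta^\star v$, and it suffices to show $\theta^\star=1$. Observe that for every $\theta>1$, since $v$ is a supersolution and $p>1$,
$$
\Pkpm(D^2(\theta v))+\mu\,\frac{\theta v}{r^2}-(\theta v)^p\le\theta\, v^p-\theta^p v^p=-\big(1-\theta^{1-p}\big)\,(\theta v)^p<0\, ,
$$
so $\theta v$ is a strict supersolution, with a gap bounded below by $\delta_\theta\,(\theta v)^p$ where $\delta_\theta:=1-\theta^{1-p}>0$; by the lower bound on $v$, this gap even blows up like $r^{-2p/(p-1)}$ as $r\to 0$.

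Assume, by contradiction, $\theta^\star>1$. Minimality of $\theta^\star$ forces $\inf_{\overline{B_{r_0}}\setminus\{0\}}(\theta^\star v-u)/v=0$. This infimum is not attained at $r_0$, where $\theta^\star v-u\ge(\theta^\star-1)v(r_0)>0$, and it is not attained at any interior point $\bar r\in(0,r_0)$ either: there the nonnegative radial function $\theta^\star v-u$ would vanish, hence have an interior minimum, so $\theta^\star v'(\bar r)=u'(\bar r)$ and $\theta^\star v''(\bar r)\ge u''(\bar r)$; since the Hessian of a radial function $w$ is $w''\,\hat x\otimes\hat x+\frac{w'}{r}(I-\hat x\otimes\hat x)$, with $\hat x=x/|x|$, this yields $D^2u(\bar r)\le D^2(\theta^\star v)(\bar r)$ as symmetric matrices, whence $\Pkpm(D^2u(\bar r))\le\Pkpm(D^2(\theta^\star v)(\bar r))$ by degenerate ellipticity. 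Subtracting the strict supersolution inequality for $\theta^\star v$ from the subsolution inequality for $u$ at $\bar r$, and using $u(\bar r)=\theta^\star v(\bar r)$ so that the zero order and the $p$-power terms cancel exactly, we get $0\ge\delta_{\theta^\star}(\theta^\star v(\bar r))^p>0$, a contradiction. This cancellation at touching points is exactly what makes the scheme work for \emph{every} $\mu>0$. Hence the infimum is approached only along some sequence $r_n\to 0$, i.e. $u(r_n)/v(r_n)\to\theta^\star$.

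Excluding this last scenario is the only genuinely delicate point, and it is where the sharp growth assumption enters. Using the scaling invariance $w\mapsto w_\lambda(s):=\lambda^{2/(p-1)}w(\lambda s)$ of equation \eqref{super}, set $u_n(s):=r_n^{2/(p-1)}u(r_ns)$ and $v_n(s):=r_n^{2/(p-1)}v(r_ns)$ on the fixed annulus $\frac12\le s\le 2$. Each $u_n$ is again a subsolution and each $v_n$ a supersolution of the same equation there, $u_n\le\theta^\star v_n$, and the bounds rescale to $c_1 s^{-2/(p-1)}\le v_n(s)$ and $u_n(s),v_n(s)\le c_2 s^{-2/(p-1)}$, hence to uniform upper and lower bounds on $[\frac12,2]$. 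By the compactness and stability properties of radial sub/supersolutions enjoying such uniform bounds --- as in \cite{BDL2} --- along a subsequence $u_n\to u_\infty$ and $v_n\to v_\infty$ locally uniformly near $s=1$, with $u_\infty$ a subsolution, $v_\infty$ a supersolution, $u_\infty\le\theta^\star v_\infty$, $v_\infty(1)\ge c_1 2^{-2/(p-1)}>0$ and $u_\infty(1)/v_\infty(1)=\lim_n u(r_n)/v(r_n)=\theta^\star$. Thus $\theta^\star v_\infty-u_\infty\ge 0$ vanishes at the \emph{interior} point $s=1$ while $\theta^\star v_\infty$ is still a strict supersolution with strictly positive gap, so the interior touching argument above applies and gives a contradiction. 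Therefore $\theta^\star=1$, that is $u\le v$ in $\overline{B_{r_0}}\setminus\{0\}$. In summary, the main obstacle is this final step: one has to prevent the optimal ratio between $u$ and $v$ from being realized only asymptotically at the singularity, and the sharp bound $c_1 r^{-2/(p-1)}\le v\le c_2 r^{-2/(p-1)}$ is precisely what reduces it, after rescaling, to the elementary interior comparison already settled.
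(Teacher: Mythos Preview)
The paper does not actually prove this theorem: it states that the proof ``is given in \cite{BDL2} for Pucci's extremal operator $\mathcal{M}^+$, but it applies exactly in the same way to both operators ${\cal P}_k^\pm$''. So there is no argument in the paper to compare with beyond this reference.

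Your scheme up to and including the interior touching case is correct and is exactly the mechanism the paper alludes to (``based merely on the ellipticity property of the principal part and on the superlinear character of the absorbing zero order term''): the choice of $\theta^\star$, the strict supersolution property of $\theta v$ for $\theta>1$ via $p>1$, and the classical Hessian comparison at an interior contact point are all fine.

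The gap is the final paragraph, where you handle the possibility that $\sup u/v$ is realised only as $r\to0$ by rescaling and passing to limits $u_\infty,v_\infty$. Two points are not justified. First, compactness: you only have uniform $L^\infty$ bounds on $u_n,v_n$ on $[\frac12,2]$, and for the degenerate operators $\Pkpm$ there is no Krylov--Safonov type H\"older estimate to supply equicontinuity. The radial differential inequalities $\Pkpm(D^2u_n)\ge -C$ and $\Pkpm(D^2v_n)\le C$ give at best one-sided bounds on $u_n',v_n'$ (and only for one of the two operators in each case), which is not enough for Arzel\`a--Ascoli. Appealing to \cite{BDL2} here is delicate, since that reference treats the \emph{uniformly elliptic} Pucci operators, where such estimates are available; it is precisely this transfer to $\Pkpm$ that needs an argument. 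Second, even granting locally uniform convergence, the limits $u_\infty,v_\infty$ are a priori only continuous viscosity sub/supersolutions, whereas your ``interior touching argument above'' is written classically and compares $D^2u$ with $D^2(\theta^\star v)$ at the contact point; it does not apply as stated to non-$C^2$ limits.

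A cleaner way to close the argument, avoiding any blow-up or limiting procedure, is to perturb so that the extremum is forced into the interior for every $\epsilon>0$ and then let $\epsilon\to0$. For instance, work with $u-\theta^\star v-\epsilon\,\psi$ for a barrier $\psi$ that blows up like $r^{-2/(p-1)}$ (the growth assumptions on $u,v$ then guarantee an interior maximum), and run the Hessian comparison at that interior point, where all functions are genuinely $C^2$. This uses the hypotheses in exactly the way the paper's remark suggests and sidesteps both issues above.
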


%%%%%%%%%%%%%%%%%%%%%%%%%%%%%%%%%%%%%%%%%%%%%%%%%%%%%%%%%%%%%%
%%%%%%%%%%%%%%%%%%%%%%%%%%%%%%%%%%%%%%%%%%%%%%%%%%%%%%%%%%%%%%

 \section{Principal eigenvalues and  eigenfunctions}\label{eigenvalues}
 \subsection{The ${\cal P}_k^+ $ case, with $k\geq 2$}
 Let $\gamma>0$. Following the approach of \cite{BNV},  we introduce
  \begin{equation}\label{eigenbnv+}
  \begin{array}{c}
 \displaystyle   \bar \lambda_\gamma \, : = \sup \left\{ \mu\in \R\, : \,  \exists \, v \in { \cal C}^2 (B(0,1)\setminus\{0\}) \hbox{ s.t. } v>0\, , v \hbox{ is radial, }\right.\\[1ex]
 \qquad\qquad\qquad\qquad  \left. {\cal P}_k^+ ( D^2 v) + \mu v r^{-\gamma} \leq 0 \hbox{ in } B(0,1)\setminus\{0\}\right\}\, ,
 \end{array}
 \end{equation}
 which will be referred to  as the radial principal eigenvalue for ${\cal P}_k^+$ and the potential $r^{-\gamma}$ in $B(0,1)\setminus \{0\}$. 

We immediately observe that $\bar \lambda_\gamma\geq 0$, as it follows  by using constant functions as super-solutions $v$. More than that, in the case $\gamma< 2$, an easy computation shows that the function
 $v(r)=1-r^\beta$
 satisfies, for $0 < \beta \leq 2-\gamma$,
 $$
 {\cal P}_k^+(D^2v)\leq -c \frac{v}{r^\gamma}
 $$
 for some $c>0$ depending on $\beta$ and $k$. Hence, we deduce that $\bar \lambda_\gamma >0$ at least for $\gamma < 2$.
  
 The aim of the following results is to provide sufficient conditions ensuring that $\bar \lambda_\gamma$ actually is a positive  eigenvalue associated with ${\cal P}_k^+$ and the potential $r^{-\gamma}$, meaning that there exist positive  solutions of the Dirichlet problem
 \begin{equation}\label{exieigen}
 \left\{
 \begin{array}{cl}
 \displaystyle {\cal P}_k^+(D^2u) + \bar \lambda_\gamma \frac{u}{r^\gamma}=0  & \hbox{ in } B(0,1)\setminus \{0\}\\[2ex]
 \displaystyle u=0  &  \hbox{ on } \partial B(0,1)
 \end{array} \right. 
 \end{equation}
Let us start by proving a maximum principle \lq\lq below" $\bar\lambda_\gamma$.
    \begin{theo}\label{maxp1}
 Let  $u\in { \cal C}^2 ( B(0,1)\setminus\{0\})\cap {\cal C}(\overline{B(0,1)})$ be a radial function satisfying
     $$ {\cal P}_k^+ ( D^2 u) + \mu u r^{-\gamma} \geq 0 \qquad \hbox{ in } B(0,1)\setminus \{0\}$$
     for some $\mu < \bar \lambda_\gamma$. If $u(1)\leq 0$, then
 $$u(r)\leq 0 \qquad \hbox{ for } 0\leq r\leq 1\, .$$ 
\end{theo}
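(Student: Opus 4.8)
The natural line of attack is the Berestycki--Nirenberg--Varadhan one: exploit the definition of $\bar\lambda_\gamma$ to produce a positive radial \emph{strict} supersolution $v$ and compare $u$ with it through the quotient $u/v$. Since $\mu<\bar\lambda_\gamma$, I first fix $\mu_1\in(\mu,\bar\lambda_\gamma)$ and a positive radial $v_1\in\mathcal{C}^2(B(0,1)\setminus\{0\})$ with $\Pkp(D^2v_1)+\mu_1 v_1 r^{-\gamma}\le 0$; being positive, $v_1$ is then a supersolution for every smaller parameter, and since $\Pkp(D^2v_1)\le -\mu_1 v_1 r^{-\gamma}\le 0$ and $k\le N-1$, Lemma~\ref{lem1}(i) gives $v_1'\le 0$, i.e. $v_1$ is non-increasing on $(0,1)$. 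To control the behaviour also at $r=1$, I would pass to the rescaled function $v(r):=v_1(\rho r)$, $\rho\in(0,1)$, for which a direct computation gives $\Pkp(D^2v)(r)=\rho^2\,\Pkp(D^2v_1)(\rho r)$, hence $\Pkp(D^2v)+\rho^{2-\gamma}\mu_1 v r^{-\gamma}\le 0$ on $(0,1)$. Choosing $\rho$ so close to $1$ that $\mu_2:=\rho^{2-\gamma}\mu_1>\mu$, one obtains a positive, non-increasing $v\in\mathcal{C}^2(B(0,1)\setminus\{0\})$, continuous and strictly positive up to $r=1$ (so that $v\ge v_1(\rho)>0$ on all of $(0,1)$), satisfying the strict inequality $\Pkp(D^2v)+\mu\, v\, r^{-\gamma}\le(\mu-\mu_2)\,v\, r^{-\gamma}<0$ in $(0,1)$.

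Assume, by contradiction, that $u(r^\ast)>0$ for some $r^\ast\in(0,1)$, and set $w:=u/v$ on $(0,1)$. Since $u$ is bounded and $v$ is bounded below by a positive constant, $w$ is bounded above; moreover $w(r)\to u(1)/v_1(\rho)\le 0$ as $r\to 1$, while $\sup w\ge w(r^\ast)>0$. The core of the argument is the case in which $M:=\sup w>0$ is attained at an interior point $r_0\in(0,1)$. There $w'(r_0)=0$ and $w''(r_0)\le 0$, which, writing $u=wv$, give $u'(r_0)=Mv'(r_0)$ and $u''(r_0)\le Mv''(r_0)$; since $M>0$, this yields $u''(r_0)+(k-1)\frac{u'(r_0)}{r_0}\le M\big(v''(r_0)+(k-1)\frac{v'(r_0)}{r_0}\big)$ and $k\frac{u'(r_0)}{r_0}=Mk\frac{v'(r_0)}{r_0}$, and taking the maximum of the two expressions (which is how $\Pkp$ acts on radial functions) we get $\Pkp(D^2u)(r_0)\le M\,\Pkp(D^2v)(r_0)$. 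Using the differential inequality for $u$ and $u(r_0)=Mv(r_0)$,
$$
0\le \Pkp(D^2u)(r_0)+\mu\,\frac{u(r_0)}{r_0^\gamma}\le M\Big(\Pkp(D^2v)(r_0)+\mu\,\frac{v(r_0)}{r_0^\gamma}\Big)<0,
$$
which is the desired contradiction.

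It remains to rule out that the positive supremum of $w$ is only \emph{approached} at the origin, and I expect this to be the main obstacle. This can happen only when $\ell:=v_1(0^+)<\infty$, in which case $M=u(0)/\ell$ and, since $v\le\ell$, the bound $w\le M$ forces $u\le Mv\le u(0)$ on $(0,1)$: the subsolution $u$ would attain a strictly positive maximum at the origin. As at $r=0$ there is only continuity and no equation — and the degeneracy of $\Pkp$ together with the singular potential excludes a Hopf-type argument — I would settle this via the radial ODE, in the spirit of Lemmas~\ref{lem1} and \ref{lem1.1}: setting $m:=u(0)$ and analysing separately the subintervals where $\Pkp(D^2u)=u''+(k-1)\frac{u'}{r}$ and where $\Pkp(D^2u)=k\frac{u'}{r}$, one integrates the resulting second/first order inequalities towards $r=0$ and uses the boundedness of $u$ to contradict $u(r)\to m$ from below (e.g. on the second order part, $(r^{k-1}u')'\ge -\mu m\, r^{k-1-\gamma}$ together with $u'\le 0$ makes $u$ blow up like $r^{2-k}$, or $-\ln r$ when $k=2$, exactly as in the proof of Lemma~\ref{lem1}(ii)); alternatively, when $\gamma<2$ and $k\ge 3$ this case can be bypassed altogether by taking $v$ to blow up at the origin (a suitable power $r^{-a}$ with $0<a<k-2$), which forces $w(r)\to 0$ there. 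Once both the interior and the origin cases are excluded, $\sup_{(0,1)}w\le 0$, so $u\le 0$ in $(0,1)$, and by continuity and $u(1)\le 0$ we conclude $u\le 0$ on $[0,1]$.
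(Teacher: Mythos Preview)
Your overall strategy is exactly the paper's: pick $\mu'<\bar\lambda_\gamma$ with $\mu<\mu'$, take a positive radial supersolution $v$ for the parameter $\mu'$, and study $\sup u/v$. The interior-maximum step is fine and matches the paper's (your computation that $\Pkp(D^2u)(r_0)\le M\,\Pkp(D^2v)(r_0)$ at a maximum of $w=u/v$ is just the ellipticity inequality the paper invokes). The rescaling $v(r)=v_1(\rho r)$ to make $v$ positive up to $r=1$ is a nice explicit version of the paper's ``without loss of generality''.

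The gap is the origin case, which you correctly flag as the obstacle but do not actually close. Your ODE sketch does not work as written: from $(r^{k-1}u')'\ge -\mu m\, r^{k-1-\gamma}$ you cannot extract a blow-up of $u$ (this is a \emph{lower} bound on $(r^{k-1}u')'$, the opposite sign from the argument in Lemma~\ref{lem1}(ii)), and you have not established $u'\le 0$ near $0$ anyway. The alternative of choosing $v=r^{-a}$ only covers $k\ge 3$, $\gamma<2$, and you do not check the supersolution inequality on all of $(0,1)$. The paper handles this case in one stroke via the \emph{sub-additivity} of $\Pkp$: when $v(0)<\infty$ and $u(0)=v(0)$, one has near $0$
\[
\Pkp(D^2(u-v))\ \ge\ \Pkp(D^2u)-\Pkp(D^2v)\ \ge\ \bigl(\mu'v(r)-\mu u(r)\bigr)r^{-\gamma}\ >\ 0,
\]
and then Remark~\ref{remsign}(ii) (applied to the bounded-from-above function $u-v$, using $k\ge 2$) gives $(u-v)'\ge 0$ near $0$. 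Since $u-v\le 0$ everywhere with $(u-v)(0^+)=0$, this forces $u\equiv v$ on a right neighbourhood of $0$, which immediately contradicts $\mu<\mu'$. Replace your ODE sketch with this two-line argument and the proof is complete.
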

 \begin{proof} The proof proceeds as the proof of Theorem 2.7 in \cite{BDL1}. We sketch the steps for the reader's convenience.

Since $\mu < \bar \lambda_\gamma$, by definition of $\bar \lambda_\gamma$ we can select $\mu'$, with $\mu<\mu'\leq \bar \lambda_\gamma$, for which there exists a positive radial function $v \in { \cal C}^2 (B(0,1)\setminus\{0\})$ satisfying \
 $$ {\cal P}_k^+ ( D^2 v) + \mu' v r^{-\gamma} \leq 0 \hbox{ in } B(0,1)\setminus\{0\}\, .$$
 Without loss of generality, we can assume that $\mu'\geq 0$, $v(r)$ is continuous up to $r=1$ and  $v(r)>0$ for $0<r\leq 1$. Moreover, by Lemma \ref{lem1}, $v$ is monotone non increasing for $r$ close to zero, so that there exists the limit $\lim_{r\to 0} v(r) = : v(0)\in (0,+\infty]$.

Let us consider the quotient function $\frac{u}{v}$ and its supremum value on $B(0,1)\setminus \{0\}$. Arguing by contradiction, let us assume that the supremum is positive. Up  to a normalization, we can assume that 
$$
\sup_{B(0,1)\setminus \{0\}}\frac{u}{v}=1
$$
Let $\bar r\in [0,1)$ be any limit point of a maximizing sequence. If $\bar r>0$, then $u(\bar r)=v(\bar r)$ and
  testing the differential inequalities satisfied by $u$ and $v$ at $\bar r$ yields, by ellipticity,
 $$
 \mu'  v(\bar r) {\bar r}^{-\gamma} \leq -\Pkp (D^2v(\bar r))\leq - \Pkp (D^2u(\bar r))\leq \mu u(\bar r) {\bar r}^{-\gamma}\, ,
 $$
 which is a contradiction to the inequality $\mu <\mu'$.  On the other hand, if $\bar r=0$,  then $v(0)<+\infty$, $u(0)= v(0)$ and $\mu^\prime v(0)-\mu u(0) >0$. Hence, by the sub-additivity of operator $\Pkp$ and by continuity, in a neighborhood of zero one has 
 $${ \cal P}_k^+ ( D^2 ( u-v)(r))  \geq \Pkp(D^2u) -\Pkp(D^2v)\geq \left(\mu' v(r)-\mu u(r)\right)r^{-\gamma}\geq {1\over 2} \left( \mu^\prime v(0)- \mu u(0)\right) r^{-\gamma}>0\, .$$
 By Remark \ref{remsign}, this implies that $(u-v)^\prime \geq 0$ in a neighborhood of zero, so that  $u-v$ cannot achieve its  maximum at zero. 
          
 \end{proof}     
 
  \begin{rema} \label{remcompbelow} {\rm 
  Let us explicitly observe that the assumption $k\geq 2$ is crucial for the validity of the maximum principle stated by Theorem \ref{maxp1}. Indeed, it fails for $k=1$, as shown by the function $u(r)=1-r\in C(\overline{B(0,1)})\cap C^2 (B(0,1)\setminus \{0\})$,which satisfies
  $$
  {\cal P}^+_1 (D^2u)=0 \qquad \hbox{ in } B(0,1)\setminus \{0\}
  $$ 
  and $u(1)=0$, $u>0$ in $B(0,1)$.
    }
  \end{rema}

 Thanks to the above maximum principle, we can prove the following existence result.

 \begin{prop}\label{exifmu}
Let  $b\geq 0$, $\gamma< 2$ and $0<\mu < \bar \lambda_\gamma$\, . Then, for any radial nondecreasing function $f\in  { \cal C}^1( B(0,1))$,   such that  $f<0$ and $f'\geq 0$ in $B(0,1)$,  there exists $u\in {\cal C}(\overline{B(0,1)})\cap {\cal C}^2(B(0,1)\setminus \{0\})$ solution of
$$
\left\{
\begin{array}{cl}
  \displaystyle {\cal P}_k^+(D^2u) + \mu \frac{u}{r^\gamma}=\frac{f}{r^\gamma}  & \hbox{ in } B(0,1)\setminus \{0\}\\[2ex]
 \displaystyle u=b  &  \hbox{ on } \partial B(0,1)
 \end{array} \right. 
 $$
Furthermore,   $u$ satisfies $u^\prime \leq0$ and $u^{ \prime \prime } \geq \frac{u^\prime}{r}$ in  $B(0,1)\setminus \{0\}$. 
           \end{prop}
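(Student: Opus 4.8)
The plan is to solve the Dirichlet problem by an approximation-from-below scheme on annuli, combined with the maximum principle of Theorem~\ref{maxp1} to obtain uniform bounds, and then to pass to the limit. First I would observe that, since $\Pkp$ applied to a radial function equals either $u'' + (k-1)u'/r$ or $ku'/r$ depending on the sign of $u'' - u'/r$, a natural candidate for an explicit radial supersolution is available: as noted just before Theorem~\ref{maxp1}, for $\gamma<2$ and suitable $\beta\in(0,2-\gamma]$ the function $w(r)=A(1-r^\beta)+b$ satisfies $\Pkp(D^2w)+\mu w r^{-\gamma}\le f r^{-\gamma}$ once $A$ is large enough (using $f<0$ bounded away from $0$ near $r=1$ is not needed; $f<0$ on all of $B(0,1)$ suffices, together with $\mu<\bar\lambda_\gamma$ so the zeroth order term has the right sign after absorbing). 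This $w$ will serve both as a barrier at the boundary and as a uniform upper bound via Theorem~\ref{maxp1} applied to $u-w$.

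Next I would set up the approximating problems. For $\ep>0$ small, consider the annulus $A_\ep = B(0,1)\setminus \overline{B(0,\ep)}$ and solve
$$
\left\{
\begin{array}{cl}
\displaystyle \Pkp(D^2 u_\ep) + \mu\frac{u_\ep}{r^\gamma} = \frac{f}{r^\gamma} & \hbox{ in } A_\ep\\[2ex]
u_\ep = b & \hbox{ on } \partial B(0,1)\\[1ex]
u_\ep = w(\ep) & \hbox{ on } \partial B(0,\ep)
\end{array}\right.
$$
where $w$ is the explicit supersolution above. Existence and uniqueness of a classical radial solution on the annulus follow from standard ODE theory applied to the two-regime equation, or alternatively from Perron's method together with the comparison principle (the zeroth-order coefficient $\mu r^{-\gamma}$ is bounded on $\overline{A_\ep}$, so uniform ellipticity is not even required for well-posedness on the annulus; one uses $\mu<\bar\lambda_\gamma$ for the sign condition making comparison work). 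By construction $w$ is a supersolution and, since $f<0$, a suitable large negative constant is a subsolution; hence $u_\ep\le w$ and $u_\ep$ is bounded below uniformly in $\ep$. Monotonicity in $\ep$ (the solutions increase as $\ep\downarrow 0$, using comparison on $A_{\ep'}\subset A_\ep$) gives a pointwise limit $u(r) = \lim_{\ep\to0} u_\ep(r)$ for $0<r\le 1$, squeezed between the uniform bounds.

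Then I would upgrade this pointwise convergence to $C^2_{loc}(B(0,1)\setminus\{0\})$ convergence. On any compact annulus $\{0<\delta\le r\le 1-\delta\}$ the functions $u_\ep$ are uniformly bounded, and from the ODE — written in either regime — one extracts uniform bounds on $u_\ep'$ and $u_\ep''$, then on $u_\ep'''$ away from the switching set, giving equicontinuity of $u_\ep''$; Ascoli then yields $C^2$ convergence and $u$ solves the equation classically in $B(0,1)\setminus\{0\}$ with $u(1)=b$. The sign information is obtained by applying Lemma~\ref{lem1.1}(i) with $g = (f-\mu u)r^{-\gamma}$: this $g$ is negative (since $\mu<\bar\lambda_\gamma$ forces, via Theorem~\ref{maxp1} applied to a shifted function, that $u$ stays controlled; more directly, $f<0$ and if $u$ were somewhere nonpositive we still need $f-\mu u<0$, so actually one should first establish $u>0$ or at least that $f-\mu u<0$ — see below), and, once one checks $g'\ge 0$, Lemma~\ref{lem1.1}(i) delivers $u'\le 0$ and $u''-u'/r\ge 0$ on $(0,1)$.

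The main obstacle, and the point requiring the most care, is the behavior at the origin: showing that $u$ extends continuously to $r=0$, i.e. that $u\in C(\overline{B(0,1)})$. The upper bound $u\le w$ gives $\limsup_{r\to0}u(r)<+\infty$, and the lower bound gives a finite $\liminf$; the monotonicity $u'\le 0$ from Lemma~\ref{lem1.1}(i) then forces $u(r)$ to be nonincreasing near $0$, hence $\lim_{r\to0}u(r)$ exists and is finite, which is exactly continuity at $0$. So in fact the sign step and the continuity step are intertwined: I would first prove $u'\le 0$ near $0$ (which needs $g=(f-\mu u)r^{-\gamma}<0$, and for this I would argue that $u\le w$ combined with $\mu>0$ and $f<0$ is not quite enough, so one instead notes $u_\ep$ is itself bounded above by $w$ and below by a constant, applies the bounded-below conclusion of Lemma~\ref{lem1}(i)/(ii) to each $u_\ep$ to get $u_\ep'\le0$, passes this to the limit, and then revisits $g$), and then conclude the limit at $0$ exists. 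A secondary subtlety is justifying the switch-regime ODE analysis uniformly in $\ep$ near the switching points $\{u_\ep''=u_\ep'/r\}$, where $C^3$ bounds degenerate; here the structural fact from Lemma~\ref{lem1.1}(i) that $u''-u'/r\ge 0$ (so the solution is eventually in the single regime $u''+(k-1)u'/r = g$, i.e. the $k$-dimensional Laplacian regime) removes the difficulty a posteriori and can be bootstrapped from the approximants.
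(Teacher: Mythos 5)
Your annulus-exhaustion scheme takes a genuinely different route from the paper, and in its present form it has a real gap that you yourself sense but do not close. The paper does not solve a sequence of degenerate Dirichlet problems on annuli; instead it sets $u_0\equiv b$ and iterates the \emph{explicit, linear} formula
$$
u_n(r)=b-\int_r^1\frac{1}{s^{k-1}}\int_0^s\bigl(f(t)-\mu u_{n-1}(t)\bigr)t^{k-1-\gamma}\,dt\,ds\, ,
$$
which is nothing but solving $u_n''+(k-1)u_n'/r=(f-\mu u_{n-1})r^{-\gamma}$. The whole point of this choice is that the sign information you want to deduce a posteriori is read off the formula a priori: since $u_0=b\geq0$ and $f<0$, induction gives $f-\mu u_{n-1}<0$, hence $u_n'(r)=r^{1-k}\int_0^r(f-\mu u_{n-1})t^{k-1-\gamma}dt\leq0$ and $u_n\geq u_{n-1}\geq b\geq 0$, and the monotonicity of $(f-\mu u_{n-1})t^{-\gamma}$ (here $f'\geq0$, $u_{n-1}'\leq0$ are both used) gives $u_n''-u_n'/r\geq0$ directly. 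So every iterate already sits in the ``$k$-dimensional Laplacian regime'' of $\Pkp$, and continuity at the origin is built in for $\gamma<2$. Uniform boundedness then follows by normalizing and invoking Theorem~\ref{maxp1}.

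Your scheme, by contrast, has to establish all of this after the fact, and the bootstrap you sketch does not close. To apply Lemma~\ref{lem1.1}(i) you need $g=(f-\mu u)r^{-\gamma}<0$ \emph{and} $g'\geq0$; the second condition already presupposes $u'\leq0$, which is what you are trying to prove. Your proposed fix is to apply Lemma~\ref{lem1} to the approximants $u_\ep$, but Lemma~\ref{lem1}(i) requires $\Pkp(D^2u_\ep)\leq0$, i.e.\ $f-\mu u_\ep\leq0$, which is not automatic unless you first know $u_\ep\geq0$ (or at least $u_\ep\geq f/\mu$); proving that requires a comparison or minimum principle for $\Pkp+\mu r^{-\gamma}$ on the annulus, and with a \emph{positive} zero-order coefficient this is not the standard comparison principle — it is itself an eigenvalue statement that neither $\bar\lambda_\gamma$ nor Theorem~\ref{maxp1} (formulated on the punctured ball, not the annulus) immediately supplies. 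The same issue underlies your claimed existence and uniqueness on $A_\ep$: for the degenerate operator $\Pkp$ with $\mu>0$ in front of $u$, well-posedness of the annular Dirichlet problem is not ``standard ODE theory'' because of the two-regime switching and because Perron's method needs precisely the comparison you have not established. In short, the decomposition into ``solve on annuli, then bootstrap the signs'' transfers the difficulty into a comparison/sign circularity, while the paper's nonlinear-to-linear Picard iteration dissolves it by making the sign conditions explicit from the first step.
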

 \begin{proof}  Let us inductively define the following sequence of functions: we set $u_0(r)\equiv b$ and, for $n\geq 1$,
 $$
 u_n(r) \, : = b -\int_r^1 \frac{1}{s^{k-1}}\int_0^s \left(f(t)-\mu\, u_{n-1}(t)\right)t^{k-1-\gamma} dt\, ds\, .
 $$
 The assumptions on $\gamma$,  $b$ and $f$ make $\{ u_n\}$ a well defined sequence of non negative continuous functions in $[0,1]$, of class ${\cal C}^2$ in $(0,1)$ and  satisfying
 $$
 u'_n(r)\leq 0\, ,\quad u''_{n}(r)-\frac{u'_n(r)}{r} =- \frac{k}{r^k} \int_0^r \left(f(t)-\mu\, u_{n-1}(t)\right)t^{k-1-\gamma} dt +\left(f(r)-\mu\, u_{n-1}(r)\right)r^{-\gamma} \geq 0\, .
 $$
 Hence, for every $n\geq 1$, $u_n\in {\cal C}(\overline{B(0,1)})\cap {\cal C}^2(B(0,1)\setminus \{0\})$ is a radial positive solution of
$$
\left\{
\begin{array}{cl}
  \displaystyle {\cal P}_k^+(D^2u_n) =\left( f-\mu\, u_{n-1}\right)\, r^{-\gamma}  & \hbox{ in } B(0,1)\setminus \{0\}\\[2ex]
 \displaystyle u_n=b  &  \hbox{ on } \partial B(0,1)
 \end{array} \right. 
 $$
Furthermore, a simple induction argument shows that $u_n\geq u_{n-1}$ in $[0,1]$ for all $n\geq 1$. Since $f<0$, this implies also that 
$u_n$ is not identically zero. 

We claim that $\{u_n$\} is  bounded in ${\cal C}(\overline{B(0,1)})$. Indeed,
if not, dividing by $\|u_{n}\|_\infty=u_n(0)\rightarrow +\infty$ and defining  $v_n = \frac{u_n}{\|u_n\|_\infty}$,  one gets that    $v_n$ is a ${ \cal C}^2( B(0,1)\setminus \{0\})$ radial function satisfying $v_n^\prime \leq 0$, $v_n^{ \prime \prime } \geq { v_n^\prime \over r}$ and
  $$
  \left\{
\begin{array}{cl}
  \displaystyle { \cal P}_k^+ ( D^2 v_{n} )  = \left(\frac{f}{\|u_n\|_\infty}-\mu \, v_{n-1}\frac{ \|u_{n-1}\|_\infty}{ \|u_n\|_\infty}\right) r^{-\gamma}& \hbox{ in } B(0,1)\setminus \{0\}\\[2ex]
\displaystyle v_n=\frac{b}{\|u_n\|_\infty}  &  \hbox{ on } \partial B(0,1)
 \end{array} \right. 
$$
Furthermore 
               $$ v^\prime_n \geq - \frac{1}{k-\gamma} \frac{\|f-\mu u_{n-1}\|_{\infty}}{\|u_n \|_\infty } r^{1-\gamma}\geq -C r^{1-\gamma}\, , $$ 
which implies that $\{v_n^\prime\}$ is locally uniformly bounded in $(0,1)$, and by the equation, so is $\{v_n^{ \prime \prime}\}$. We can let $n\to \infty$  in $B(0,1) \setminus \{0\}$
and we get that $v_n \rightarrow v$, $v_n^\prime \rightarrow v^\prime $  locally uniformly and, by the equation,  $v_n^{ \prime \prime} \rightarrow v^{ \prime \prime}$ as well. Furthermore $v$ is ${ \cal C}^2( B(0, 1)\setminus \{0\})$, $v^\prime \leq 0$, $v^{ \prime \prime } \geq { v^\prime \over r}$, and, possibly considering a subsequence such that there exists the limit $\lim_{n\rightarrow\infty}\frac{ \|u_{n-1}\|_\infty}{ \|u_n\|_\infty}=c\leq 1$, $v$ satisfies
              $$ { \cal P}_k^+ ( D^2 v) + c\mu v r^{-\gamma} = 0\, ,\ \ \  v(1) = 0\, .$$ 
By Theorem \ref{maxp1} we obtain $v\equiv 0$, which contradicts $\|v\|_\infty =1$.

Now, with $\{u_n\}$  bounded, we can repeat what was done with $\{v_n\}$ and we obtain that  
  $u_n\rightarrow u$, with $u$ as in  the statement by Lemma \ref{lem1.1}.
                
           \end{proof} 
           
 We are now ready to prove our first existence result for  eigenfunctions in the case $\gamma<2$.
         
 \begin{theo}\label{exief} Suppose that $\gamma<2$. Then, there exists $u>0$,   $u\in C(\overline{B(0, 1)})\cap C^2(B(0,1) \setminus \{0\})$ solution of \eqref{exieigen}. 
   Furthermore, $u$ satisfies $u^\prime \leq0$ and $u^{ \prime \prime } \geq \frac{u^\prime}{r}$ in  $B(0,1)\setminus \{0\}$. 
 \end{theo}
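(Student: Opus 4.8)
The plan is to realize $u$ as a locally uniform limit of solutions of the nonhomogeneous problems provided by Proposition \ref{exifmu}, i.e.\ by the classical approximation-from-below scheme for the principal eigenvalue. Fix a sequence $\mu_n \uparrow \bar\lambda_\gamma$ with $0 < \mu_n < \bar\lambda_\gamma$ (possible since $\bar\lambda_\gamma > 0$ when $\gamma < 2$), and let $u_n$ be the solution given by Proposition \ref{exifmu} with $b = 0$ and $f \equiv -1$:
\[
\Pkp(D^2 u_n) + \mu_n \frac{u_n}{r^\gamma} = -\frac{1}{r^\gamma} \quad \text{in } B(0,1)\setminus\{0\}, \qquad u_n = 0 \quad \text{on } \partial B(0,1),
\]
with $u_n' \le 0$ and $u_n'' \ge u_n'/r$. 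Then $u_n$ is nonincreasing, so $\|u_n\|_\infty = u_n(0)$, and since $u_n$ is bounded the identity $\big(r^{k-1}u_n'\big)' = (-1-\mu_n u_n)\,r^{k-1-\gamma}$ integrates (the right-hand side being integrable near $0$ as $\gamma < 2 \le k$) to $r^{k-1}u_n'(r) = \int_0^r(-1-\mu_n u_n)\,t^{k-1-\gamma}\,dt$.

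\emph{Step 1: $\|u_n\|_\infty \to +\infty$.} I would argue by contradiction. If $\|u_n\|_\infty$ stays bounded, the integral identity above gives $|u_n'(r)| \le C\,r^{1-\gamma}$ on $(0,1)$, so $\{u_n\}$, $\{u_n'\}$, $\{u_n''\}$ are locally bounded on $B(0,1)\setminus\{0\}$ and, along a subsequence, $u_n \to \bar u$ in $\mathcal{C}^2_{loc}$, with $\bar u \ge 0$, $\bar u' \le 0$, $\bar u'' \ge \bar u'/r$ and $\Pkp(D^2\bar u) + \bar\lambda_\gamma\,\bar u/r^\gamma = -r^{-\gamma}$. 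The negativity of the right-hand side forces $\bar u \not\equiv 0$, and since $\Pkp(D^2\bar u) \le -r^{-\gamma} < 0$ the same sign analysis for $r^{k-1}\bar u'$ yields $\bar u > 0$ on $[0,1)$. Setting $M := \|\bar u\|_\infty < \infty$, one then has $\Pkp(D^2\bar u) + \big(\bar\lambda_\gamma + \tfrac{1}{M}\big)\bar u\,r^{-\gamma} = r^{-\gamma}\big(\tfrac{\bar u}{M} - 1\big) \le 0$, so $\bar u$ is an admissible positive radial supersolution for the exponent $\bar\lambda_\gamma + \tfrac{1}{M} > \bar\lambda_\gamma$ — contradicting the definition of $\bar\lambda_\gamma$ as a supremum in \eqref{eigenbnv+}.

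\emph{Step 2: normalization and passage to the limit.} I would set $w_n := u_n/\|u_n\|_\infty$, so that $\|w_n\|_\infty = w_n(0) = 1$, $w_n' \le 0$, $w_n'' \ge w_n'/r$, $w_n = 0$ on $\partial B(0,1)$ and $\Pkp(D^2 w_n) + \mu_n w_n/r^\gamma = -\big(\|u_n\|_\infty r^\gamma\big)^{-1} \to 0$ locally uniformly. Inserting $0 \le w_n \le 1$ into the corresponding identity for $r^{k-1}w_n'$ gives the key bound $|w_n'(r)| \le C\,r^{1-\gamma}$ with $C$ \emph{independent of $n$}, from which I would extract: (a) local boundedness of $\{w_n\},\{w_n'\},\{w_n''\}$, hence (up to a subsequence) $\mathcal{C}^2_{loc}$ convergence $w_n \to w$, with $w \ge 0$, $w' \le 0$, $w'' \ge w'/r$ and $\Pkp(D^2 w) + \bar\lambda_\gamma w/r^\gamma = 0$; (b) integrating over $[r,1]$ (where $r^{1-\gamma}$ is integrable as $\gamma < 2$), the uniform decay $w_n(r) \le \tfrac{C}{2-\gamma}(1 - r^{2-\gamma})$, so that $w$ extends continuously to $\overline{B(0,1)}$ with $w = 0$ on $\partial B(0,1)$; (c) integrating over $[0,\rho]$, the uniform lower bound $w_n(\rho) \ge 1 - \tfrac{C}{2-\gamma}\rho^{2-\gamma}$, so that $w(\rho_0) \ge \tfrac12$ for $\rho_0$ small, whence $w \not\equiv 0$. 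To finally get $w > 0$ on $B(0,1)$, I would note that $h(r) := r^{k-1}w'(r)$ solves $h'(r) = -\bar\lambda_\gamma r^{k-1-\gamma}w(r)$ with $h(0^+) = 0$ and $h' < 0$ wherever $w > 0$ (here $\bar\lambda_\gamma > 0$ is essential): a zero $w(r_1) = 0$ with $r_1 \in (0,1)$ would give $h(r_1) < 0$, contradicting $w'(r_1) = 0$ at the interior minimum $r_1$ of $w \ge 0$. The asserted monotonicity of $w$ is inherited from the $w_n$ (or follows from Corollary \ref{corollary}).

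The main obstacle is Step 1: this is exactly where the \emph{maximality} in the definition of $\bar\lambda_\gamma$ must be exploited, the bounded-case limit $\bar u$ producing an illegitimate positive supersolution for a strictly larger exponent. The second delicate point is the nontriviality of the limit $w$ in Step 2, which relies on the gradient estimate $|w_n'(r)| \le C\,r^{1-\gamma}$ holding with a constant $C$ \emph{uniform in $n$} — this is what keeps the mass of $w_n$ from escaping into the origin — and both facts, together with the positivity $\bar\lambda_\gamma > 0$, rest on the hypothesis $\gamma < 2$.
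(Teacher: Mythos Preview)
Your proof is correct and follows essentially the same approach as the paper: construct $u_n$ via Proposition~\ref{exifmu} with $\mu_n \uparrow \bar\lambda_\gamma$, $f\equiv-1$, $b=0$; show $\|u_n\|_\infty \to \infty$ by producing, in the bounded case, a positive supersolution for an exponent strictly above $\bar\lambda_\gamma$; then normalize and pass to the limit. Your Step~2 actually supplies more detail than the paper (which simply writes ``by arguing on the normalized sequence \dots\ we can let $n\to\infty$'') on the uniform gradient bound, the continuity up to the boundary, the nontriviality, and the strict positivity of the limit eigenfunction, but the overall scheme is identical.
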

 \begin{proof}    
 Let us select a positive increasing sequence $\lambda_n\rightarrow \bar\lambda_\gamma$. By Proposition \ref{exifmu}, for every $n\geq 1$, we can further consider a positive radial solution $u_n\in {\cal C} ( \overline{B(0,1)}) \cap { \cal C}^2(B(0,1)\setminus\{0\})$ of 
 $$
\left\{
\begin{array}{cl}
  \displaystyle {\cal P}_k^+(D^2u_n) +\lambda_n \frac{u_n}{r^\gamma}= -\frac{1}{r^\gamma} & \hbox{ in } B(0,1)\setminus \{0\}\\[2ex]
 \displaystyle u_n=0  &  \hbox{ on } \partial B(0,1)
 \end{array} \right. 
 $$
More explicitly, the functions $u_n$ satisfy
$$
\left\{
\begin{array}{cl}
  \displaystyle
u_n''+(k-1)\frac{u_n'}{r}+\lambda_n \frac{u_n}{r^\gamma}= -\frac{1}{r^\gamma}  &  \hbox{ for } 0<r<1\, ,\\[2ex]
 \displaystyle u_n(1)=0 & 
 \end{array}\right.
 $$
We claim that the sequence  $\{ \|u_n\|_\infty\}$ is unbounded. Indeed, 
assuming, on the contrary, that $\{u_n\} $ is uniformly  bounded, we could extract  from $\{u_n\}_n$ a subsequence uniformly converging to a function $u$ satisfying
           $$ u^{ \prime \prime} + ( k-1) { u^\prime \over r} +\bar \lambda_\gamma {u \over r^\gamma}= {-1\over r^\gamma} \, .$$
 Moreover, $u$ would belong ${\cal C}(\overline{B(0,1)})\cap { \cal C}^2(B(0,1)\setminus\{0\})$ and satisfy $ u^{ \prime \prime}\geq  u^{ \prime }
r^{-1}$. Hence, $u$  would be a positive bounded solution of
$${ \cal P}_k^+ ( D^2 u) + \bar \lambda_\gamma \frac{u}{ r^{\gamma}} = -r^{-\gamma}\qquad \hbox{ in } B(0,1)\setminus \{0\}\, .$$
As a consequence, for $\epsilon>0$ sufficiently small, $u\in { \cal C}^2(B(0,1)\setminus\{0\})$ would be a positive radial function satisfying
 $${ \cal P}_k^+ ( D^2u) + (\bar\lambda_\gamma +\epsilon) \frac{u}{r^\gamma} \leq 0$$
in  contradiction with  the definition of $\bar\lambda_\gamma$.

 This proves  that the sequence $\{u_n\}_n$ is  not uniformly bounded. Then, by arguing on the normalized sequence $\{ v_n:= 
 \frac{u_n }{\|u_n\|_\infty}\}$, we can let $n\to \infty$ in order to obtain a limit function $u$ as in the statement.  

\end{proof}

Let us now make an observation relying on Corollary \ref{corollary}. Since any eigenfunction $u$ solving \eqref{exieigen} actually satisfies $u''\geq \frac{u'}{r}$, then $u$ is a solution of the ordinary differential equation
$$
u''+(k-1) \frac{u'}{r} +\bar \lambda_\gamma \frac{u}{r^\gamma}=0\qquad \hbox{ in } (0,1)\, .
$$
Therefore, the existence of $u$ can be proved also by following a variational approach for the above linear eigenvalue problem. The two approaches lead to the same result, as stated in the following proposition, where 
 we use the notation
 $$
{\cal  H}_0^{1,k}= \{ u\in W^{1,1}((0,1))\, :\ u(1)=0\, ,\  \int_0^1 (u^\prime)^2 r^{k-1} dr<+\infty \} 
 $$    
    \begin{prop}\label{variational}
   For any $\gamma <2$, one has 
   $$\bar \lambda_\gamma=  \lambda_{\gamma ,{\rm var}} \, : = \inf_{\{ u\in {\cal H}_0^{1,k} \, :\ \int _0^1 u^2 r^{k-1-\gamma}dr = 1\} } \int_0^1
    (u^\prime )^2 r^{k-1} dr \, .$$
      \end{prop}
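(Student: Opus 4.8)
The plan is to prove equality $\bar\lambda_\gamma = \lambda_{\gamma,\mathrm{var}}$ by establishing the two inequalities separately, exploiting the fact — observed just before the statement, via Corollary \ref{corollary} — that for this range of parameters the relevant super-solutions and eigenfunctions for $\Pkp$ automatically satisfy $u''\geq u'/r$, so that $\Pkp(D^2u)$ coincides with the $k$-dimensional radial Laplacian $u''+(k-1)u'/r = r^{1-k}(r^{k-1}u')'$. This reduces everything to a classical weighted one-dimensional Rayleigh quotient analysis.

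\medskip

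\emph{Step 1: $\bar\lambda_\gamma \leq \lambda_{\gamma,\mathrm{var}}$.} By Theorem \ref{exief} there exists a positive eigenfunction $u\in C(\overline{B(0,1)})\cap C^2(B(0,1)\setminus\{0\})$ solving \eqref{exieigen}, and by Corollary \ref{corollary} it satisfies $u'\leq 0$, $u''\geq u'/r$; hence it is a classical solution of the ODE $(r^{k-1}u')' + \bar\lambda_\gamma\, r^{k-1-\gamma}u = 0$ on $(0,1)$ with $u(1)=0$. I would first check that $u\in {\cal H}_0^{1,k}$ and that $\int_0^1 u^2 r^{k-1-\gamma}\,dr<+\infty$: since $\gamma<2$ and $u$ is bounded, the weights $r^{k-1}$ and $r^{k-1-\gamma}$ are integrable near $0$ once one controls $u'$, which the representation formula from Proposition \ref{exifmu}/\ref{exief} gives as $|u'(r)|\leq C r^{1-\gamma}$, so $(u')^2 r^{k-1}\in L^1$ exactly when $k+1-2\gamma>-1$; for the borderline range a short separate argument or the log-type bound near the origin handles it. Then multiply the ODE by $u$, integrate over $(\epsilon,1)$, integrate by parts, and let $\epsilon\to 0$, using $u(1)=0$ and the decay of the boundary terms $r^{k-1}u'u$ at $0$, to obtain $\int_0^1 (u')^2 r^{k-1}\,dr = \bar\lambda_\gamma \int_0^1 u^2 r^{k-1-\gamma}\,dr$. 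Since $u$ is an admissible competitor (after normalization) this yields $\lambda_{\gamma,\mathrm{var}}\leq \bar\lambda_\gamma$.

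\medskip

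\emph{Step 2: $\lambda_{\gamma,\mathrm{var}} \leq \bar\lambda_\gamma$.} Here I would use a test-function argument against the definition \eqref{eigenbnv+}. Take any $\mu<\bar\lambda_\gamma$ and, as in the proof of Theorem \ref{maxp1}, select $\mu'\in(\mu,\bar\lambda_\gamma]$ and a positive radial super-solution $v\in C^2(B(0,1)\setminus\{0\})$ with $\Pkp(D^2v)+\mu' v r^{-\gamma}\leq 0$; by Lemma \ref{lem1} $v'\leq 0$ near $0$, hence $\Pkp(D^2v)\geq k v'/r$ — but to run the Rayleigh-quotient comparison I actually want $\Pkp(D^2v)=v''+(k-1)v'/r$, which by Lemma \ref{lem1.1}(i) (applied with the negative monotone right-hand side coming from the super-solution inequality, or directly by the argument in Corollary \ref{corollary}) holds as well; so $v$ is a radial super-solution of the ODE operator. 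Then, for any $\varphi\in {\cal H}_0^{1,k}$ with $\int_0^1\varphi^2 r^{k-1-\gamma}=1$, use the standard "ground-state substitution''/Picone-type inequality: writing $\varphi = v\psi$ and integrating by parts (on $(\epsilon,1)$, then $\epsilon\to 0$, controlling boundary terms using $\varphi(1)=0$ and the sign/monotonicity of $v$), one gets
$$
\int_0^1 (\varphi')^2 r^{k-1}\,dr \;\geq\; \mu' \int_0^1 \varphi^2 r^{k-1-\gamma}\,dr \;=\;\mu' \;>\;\mu .
$$
Taking the infimum over $\varphi$ gives $\lambda_{\gamma,\mathrm{var}}\geq \mu'$, and letting $\mu\uparrow\bar\lambda_\gamma$ gives $\lambda_{\gamma,\mathrm{var}}\geq\bar\lambda_\gamma$. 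Combining the two steps proves the proposition.

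\medskip

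\emph{Main obstacle.} The delicate point is the justification of all the integrations by parts near the singularity at $r=0$: one must know that $\varphi\in{\cal H}_0^{1,k}$ forces enough decay of the boundary terms $r^{k-1}\varphi'\varphi$ (and, in Step 2, of $r^{k-1} v' \psi^2 v$ or the analogous Picone remainder) as $r\to 0$, and that the super-solution $v$ does not decay too fast — this is exactly where $\gamma<2$ is used and where the finiteness assumptions built into ${\cal H}_0^{1,k}$ and into the constraint $\int u^2 r^{k-1-\gamma}<\infty$ must be checked carefully. A clean way around the worst technicalities is to prove the two inequalities using the same approximation-from-below scheme already in place: for Step 2, compare $v$ with the solutions $u_n$ of the penalized problems in Proposition \ref{exifmu} on $(\epsilon,1)$ and pass to the limit, so that one never integrates by parts directly against a singular competitor but only against the smooth, well-controlled $u_n$, whose gradient bound $|u_n'|\leq C r^{1-\gamma}$ is explicit.
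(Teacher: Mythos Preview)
Your argument is sound, but note two slips. First, the headings of your two steps are interchanged: Step~1 actually yields $\lambda_{\gamma,\mathrm{var}}\leq\bar\lambda_\gamma$ and Step~2 yields $\lambda_{\gamma,\mathrm{var}}\geq\bar\lambda_\gamma$. Second, in Step~2 you need not (and cannot, via the lemmas you cite, which apply only to \emph{solutions}) show $v''\geq v'/r$: since $\Pkp(D^2v)=\max\{v''+(k-1)v'/r,\,kv'/r\}\geq v''+(k-1)v'/r$ for every radial $C^2$ function, any radial super-solution for $\Pkp$ is automatically a super-solution for the $k$-dimensional radial Laplacian, and this is all the Picone inequality requires (together with $v'\leq 0$ from Lemma~\ref{lem1} to discard the boundary term at $r=\epsilon$ with the right sign).

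The paper's proof is shorter and reversed in spirit. Rather than using the $\Pkp$-eigenfunction from Theorem~\ref{exief} as a competitor, it starts from a minimizer $u$ for $\lambda_{\gamma,\mathrm{var}}$, checks by a direct monotonicity/integration argument that $u'\leq 0$ and $u''\geq u'/r$, so that $u$ is a positive radial solution of $\Pkp(D^2u)+\lambda_{\gamma,\mathrm{var}}u\,r^{-\gamma}=0$; this gives $\lambda_{\gamma,\mathrm{var}}\leq\bar\lambda_\gamma$ straight from the definition of $\bar\lambda_\gamma$. The reverse inequality is then obtained in one line from Theorem~\ref{maxp1}: were $\lambda_{\gamma,\mathrm{var}}<\bar\lambda_\gamma$, the maximum principle would force $u\leq 0$. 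This bypasses both the Picone identity and the verification that the $\Pkp$-eigenfunction lies in $\mathcal{H}_0^{1,k}$, and it does not rely on Theorem~\ref{exief}. Your route, in exchange, makes the link with the Rayleigh quotient more explicit, and your Step~2 establishes $\lambda_{\gamma,\mathrm{var}}\geq\bar\lambda_\gamma$ independently of the construction of any eigenfunction.
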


 \begin{proof} 
 The existence of a minimum realizing the infimum defining $\bar \lambda_{\gamma, {\rm var}}$ is classical.  
 
 Therefore, there exists $u\in {\cal H}_0^{1,k}$, with $u>0$, satisfying in the distributional sense
 $$
 (u'r^{k-1})'=- \lambda_{\gamma, {\rm var}} u \, r^{k-1-\gamma}\, .$$
 Hence, $(u'r^{k-1})'$ is continuous in $(0,1)$ and this implies that $u$ is in ${\cal C}^2((0,1))$. Thus, $u$ is a classical solution of
 $$
 u^{ \prime \prime } + ( k-1) \frac{u^\prime}{ r} = - \lambda_{\gamma, {\rm var}} \frac{u}{ r^\gamma} \ \mbox{ in } (0,1)\, ,\quad u(1)=0\, .$$
By monotonicity, there exists the limit $\lim_{r\to 0} u'(r) r^{k-1}$, which must be lesser than or equal  to zero, otherwise $u(r)$ would become infinitely negative for $r\to 0$.  Thus, one has $u'(r)\leq 0$ for $r\in (0,1)$ and
  then $-\lambda_{\gamma, {\rm var}} u(r)r^{-\gamma}$ is  increasing. Integrating the above equation yields
 $$ u^\prime (r) r^{k-1} \leq -\lambda_{\gamma, {\rm var}} u(r) r^{-\gamma} \int_0^r s^{k-1}ds= -\frac{\lambda_{\gamma, {\rm var}}}{k}u(r) r^{k-\gamma}\, ,$$
           and then 
 $$ \frac{u'}{r}\leq \frac{1}{k} \left( u''+(k-1) \frac{u'}{r}\right)\, ,$$
 that is
 $$\frac{u'}{r}\leq u''\, .$$
 Hence, $u\in {\cal C}^2(B(0,1)\setminus \{0\})$ is a positive radial solution of
 ${ \cal P}_k^+ ( D^2 u) + \lambda_{\gamma, {\rm var}} ur^{-\gamma} =0$ and this implies, 
  by  definition, that   $\lambda_{\gamma, {\rm var}}\leq \bar \lambda_\gamma$. On the other hand, if $\lambda_{\gamma, {\rm var}}< \bar \lambda_\gamma$, then Theorem \ref{maxp1} gives $u\leq 0$, which is impossible.
           
    \end{proof} 

Thanks to the characterization given by Proposition \ref{variational},  we can give the explicit expression of the eigenvalue and of an eigenfunction in the case $\gamma=2$.

\begin{theo} Let $\gamma=2$ and $k\geq 3$. Then 
$$
 \bar \lambda_2 = \frac{(k-2)^2}{4}$$
 and the function
$$
u(r)= \frac{- \ln r}{r^{\frac{k-2}{2}}}
$$
is an explicit solution of problem \eqref{exieigen} with $\gamma=2$.
\end{theo}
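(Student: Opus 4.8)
The plan is to exploit the variational characterization $\bar\lambda_\gamma=\lambda_{\gamma,\mathrm{var}}$ from Proposition \ref{variational} together with the stability of principal eigenvalues under $\gamma\to 2^-$, exactly as sketched in the introduction. First I would verify directly that the candidate pair is an honest eigenfunction/eigenvalue pair: with $u(r)=(-\ln r)\,r^{-(k-2)/2}$, a routine computation gives $u'(r)=-r^{-k/2}\big(1+\tfrac{k-2}{2}(-\ln r)\big)<0$ for $r$ small, and checking the ODE $u''+(k-1)\tfrac{u'}{r}+\tfrac{(k-2)^2}{4}\tfrac{u}{r^2}=0$ on $(0,1)$ reduces, after substituting $u=(-\ln r)\,r^{-\tau}$ with $\tau=\tfrac{k-2}{2}$, to the identity $\tau^2-(k-2)\tau+\tfrac{(k-2)^2}{4}=0$ for the $(-\ln r)$-term and the vanishing of the residual first-order term because $2\tau=k-2$; this is the standard degenerate (double-root) case of the Euler equation. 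One then checks $u''\geq u'/r$ near $0$ (equivalently $u'r^{k-1}$ nonincreasing, which follows since $(u'r^{k-1})'=-\tfrac{(k-2)^2}{4}u r^{k-3}<0$), so by the reduction in Corollary \ref{corollary}'s spirit $u$ solves $\mathcal P_k^+(D^2u)+\tfrac{(k-2)^2}{4}\,u\,r^{-2}=0$ in $B(0,1)\setminus\{0\}$, with $u=0$ on $\partial B(0,1)$ and $u>0$ inside. This already shows $\tfrac{(k-2)^2}{4}$ is \emph{an} eigenvalue; the content is that it equals $\bar\lambda_2$.

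For the identification $\bar\lambda_2=\tfrac{(k-2)^2}{4}$ I would argue by a two-sided squeeze. For the lower bound $\bar\lambda_2\geq\tfrac{(k-2)^2}{4}$, I would use the monotonicity and stability of $\bar\lambda_\gamma$ in $\gamma$: for $\gamma<2$ the explicit sub-solution $v=1-r^\beta$ with $\beta\le 2-\gamma$ gives a lower bound, and more to the point, using Proposition \ref{variational} with the Hardy-type trial functions $u_\varepsilon(r)=r^{-(k-2)/2+\varepsilon}-r^{\,\text{(boundary correction)}}$ one computes $\liminf_{\gamma\to 2^-}\bar\lambda_\gamma\geq\tfrac{(k-2)^2}{4}$, which is the classical best constant in the $k$-dimensional Hardy inequality $\int_0^1(u')^2r^{k-1}\,dr\geq\tfrac{(k-2)^2}{4}\int_0^1 u^2 r^{k-3}\,dr$ for $u\in\mathcal H_0^{1,k}$. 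Combined with lower semicontinuity of $\gamma\mapsto\bar\lambda_\gamma$ at $\gamma=2$ (which follows from the definition \eqref{eigenbnv+}: a super-solution at $\gamma=2$ is, after truncation, an approximate super-solution for $\gamma$ slightly below $2$), this gives $\bar\lambda_2\geq\tfrac{(k-2)^2}{4}$. For the upper bound, the existence of the positive eigenfunction $u$ constructed above forbids $\bar\lambda_2>\tfrac{(k-2)^2}{4}$: if it were strictly larger, then by the definition of $\bar\lambda_2$ one could find $\mu'$ with $\tfrac{(k-2)^2}{4}<\mu'\le\bar\lambda_2$ and a positive radial super-solution $w$ for $\mu'$, and comparing $w$ with our eigenfunction $u$ via the argument in the proof of Theorem \ref{maxp1} (examining $\sup u/w$, using sub-additivity of $\mathcal P_k^+$ and Remark \ref{remsign} at the origin, noting $u$ is bounded near $0$ only up to the $-\ln r$ factor so one works with $u/w$ directly) yields a contradiction at the point realizing the supremum.

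The main obstacle I expect is the behavior at the singularity $r=0$: unlike the case $\gamma<2$, the eigenfunction here is unbounded, $u(r)\sim (-\ln r)r^{-(k-2)/2}\to+\infty$, so the comparison/maximum-principle machinery of Theorem \ref{maxp1} — which was set up for functions continuous up to $0$ or at least bounded — does not apply verbatim. I would handle this either by (i) redoing the $\sup u/w$ analysis keeping careful track that the blow-up rates of $u$ and of any competing super-solution $w$ match (both governed by the double root $\tau=\tfrac{k-2}{2}$ of $x^2-(k-2)x+\mu'=0$ when $\mu'$ is near the critical value, so no super-solution can grow slower and the quotient stays bounded), or (ii) passing to the limit $\gamma\to 2^-$ in the eigenfunctions $u_\gamma$ of Theorem \ref{exief} after suitable normalization, using the locally uniform $\mathcal C^2$ estimates from the proof of Proposition \ref{exifmu} and the explicit ODE, and showing the limit is exactly $(-\ln r)r^{-(k-2)/2}$ up to a constant while $\bar\lambda_\gamma\to\tfrac{(k-2)^2}{4}$. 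Route (ii), leaning on the already-established stability and variational formulation, seems cleanest and is the one the introduction advertises; the delicate point there is ruling out loss of mass in the limit, i.e.\ that the normalized $u_\gamma$ do not degenerate, which is exactly where the precise Hardy constant $\tfrac{(k-2)^2}{4}$ enters.
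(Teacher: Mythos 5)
Your first paragraph is correct and is, in essence, all the work needed for the lower bound: once you have checked that $u(r)=(-\ln r)\,r^{-(k-2)/2}$ is a positive radial $C^2$ function on $B(0,1)\setminus\{0\}$ satisfying $u''\ge u'/r$ and $\mathcal P_k^+(D^2u)+\tfrac{(k-2)^2}{4}\,u\,r^{-2}=0$, the definition \eqref{eigenbnv+} with $\gamma=2$ immediately gives $\bar\lambda_2\ge\tfrac{(k-2)^2}{4}$. The paper obtains the lower bound exactly this way (identifying $\tfrac{(k-2)^2}{4}$ with the $k$-dimensional Hardy constant $\lambda_{2,\mathrm{var}}$ and noting $u$ is the corresponding infinite-energy eigenfunction). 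Your second paragraph therefore overcomplicates the lower bound and, worse, the argument you give there is directionally wrong: the observation that a $\gamma=2$ super-solution remains a super-solution for $\gamma<2$ (since $r^{-\gamma}\le r^{-2}$ on $(0,1)$) is precisely the monotonicity $\bar\lambda_\gamma\ge\bar\lambda_2$ for $\gamma<2$, i.e.\ $\bar\lambda_2\le\lim_{\gamma\to2^-}\bar\lambda_\gamma$. That is an \emph{upper} bound on $\bar\lambda_2$, not a lower one, so the step ``this gives $\bar\lambda_2\ge\tfrac{(k-2)^2}{4}$'' does not follow.

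For the upper bound, your route (i) has a genuine gap. If $\mu'>\tfrac{(k-2)^2}{4}$, the indicial equation $x^2-(k-2)x+\mu'=0$ has a pair of complex conjugate roots, so radial solutions of the linearized problem oscillate; there is no double root, and the claim that the blow-up rates of $u$ and of a competing super-solution $w$ are ``both governed by the double root'' is false for $\mu'$ strictly above the critical value. Hence the $\sup u/w$ comparison does not close as written, and one cannot control the behavior of the quotient at the singular point. Your route (ii) is closer to the paper, but the paper in fact avoids passing to the limit in the eigenfunctions $u_\gamma$ altogether: it only combines the monotonicity $\bar\lambda_2\le\lim_{\gamma\to2^-}\bar\lambda_\gamma$ with Proposition \ref{variational} ($\bar\lambda_\gamma=\lambda_{\gamma,\mathrm{var}}$ for $\gamma<2$) and the classical stability $\lambda_{\gamma,\mathrm{var}}\to\lambda_{2,\mathrm{var}}=\tfrac{(k-2)^2}{4}$ as $\gamma\to2^-$. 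This two-line squeeze on the eigenvalues alone, paired with the explicit eigenfunction for the lower bound, is the intended proof; no normalization or compactness argument for the eigenfunctions is needed.
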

\begin{proof}
As it is well known (see e.g. \cite{GAPA}), the value $\frac{(k-2)^2}{4}$ is nothing but the inverse of the best constant in Hardy's inequality in dimension $k$. In other words, one has
$$
\frac{(k-2)^2}{4}= \lambda_{2, {\rm var}}
$$
and the function $u(r)= \frac{- \ln r}{r^{\frac{k-2}{2}}}$ is the explicit infinite energy solution of the eigenvalue problem
$$
\left\{
\begin{array}{cl}
\displaystyle -\Delta u = \lambda_{2,{\rm var}} \frac{u}{r^2} & \hbox { in } B(0,1)\subset \R^k\\[2ex]
\displaystyle u=0 & \hbox { on } \partial B(0,1)
\end{array}
\right.
$$
As a function of one variable, $u$ belongs to ${\cal C}^2((0,1])$ and satisfies $u''\geq \frac{u'}{r}$, so that $u$ is a classical solution
in dimension $N$ of
$$
{ \cal P}_k^+ ( D^2 u) + \lambda_{2, {\rm var}} ur^{-2} =0\qquad \hbox{ in } B(0,1)\setminus \{0\}\, .
$$
Hence, by definition, we deduce 
$$\bar \lambda_2\geq \lambda_{2,{\rm var}}= \frac{(k-2)^2}{4}\, .$$
On the other hand, again by the definition of $\bar \lambda_\gamma$, one easily sees that  the value $\bar \lambda_\gamma$ is monotone non increasing with respect to $\gamma$, hence
$$
\bar \lambda_2 \leq \lim_{\gamma \to 2^-} \bar \lambda_\gamma\, .
$$
By applying Proposition \ref{variational} and by using the stability with respect to $\gamma$ of the variational eigenvalue, we then obtain
$$
\bar \lambda_2 \leq \lim_{\gamma \to 2^-} \bar \lambda_\gamma= \lim_{\gamma \to 2^-} \lambda_{\gamma, {\rm var}}=\lambda_{2, {\rm var}}=\frac{(k-2)^2}{4}\, ,
$$
which concludes the proof.
\end{proof}

We conclude this section by considering the case $\gamma >2$.

\begin{theo}\label{gamma>2}
If $\gamma>2$, then $\bar \lambda_\gamma=0$.
\end{theo}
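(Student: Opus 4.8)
The plan is to show $\bar\lambda_\gamma=0$ by exhibiting, for every $\mu>0$, a positive radial supersolution $v\in\mathcal{C}^2(B(0,1)\setminus\{0\})$ of ${\cal P}_k^+(D^2v)+\mu v r^{-\gamma}\le 0$. Since $\bar\lambda_\gamma\ge 0$ is already known, producing such a $v$ for arbitrarily large $\mu$ forces $\bar\lambda_\gamma=+\infty$ unless we are careful—but of course what we want is $\bar\lambda_\gamma=0$, so the correct strategy is the reverse: I would show that \emph{no} positive radial supersolution exists for any $\mu>0$, i.e. the defining set in \eqref{eigenbnv+} is empty for positive $\mu$, hence the supremum is $0$ (attained only by $\mu\le 0$, which are allowed via constants).

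First I would take any putative positive radial $v\in\mathcal{C}^2(B(0,1)\setminus\{0\})$ with ${\cal P}_k^+(D^2v)+\mu v r^{-\gamma}\le 0$ for some $\mu>0$. By Lemma~\ref{lem1}(i), $v'\le 0$ on $(0,1)$, so $v$ is non-increasing and the limit $v(0^+)=\lim_{r\to 0}v(r)\in(0,+\infty]$ exists and is positive. Consequently there is a constant $c_0>0$ with $v(r)\ge c_0$ for $r$ in a right neighbourhood of $0$, say $0<r\le r_0$. Then on $(0,r_0)$ one has
$$
{\cal P}_k^+(D^2v)\le -\mu\,\frac{v}{r^\gamma}\le -\mu c_0\, r^{-\gamma}\, .
$$
Now I use that ${\cal P}_k^+(D^2v)\ge u''+(k-1)\frac{u'}{r}$ is false in general; rather ${\cal P}_k^+(D^2v)$ equals either $v''+(k-1)\frac{v'}{r}$ or $k\frac{v'}{r}$, and in both cases ${\cal P}_k^+(D^2v)\le v''+(k-1)\frac{v'}{r}$ is wrong too. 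The clean bound is ${\cal P}_k^+(D^2v)\ge \min\{v''+(k-1)\frac{v'}{r},\,k\frac{v'}{r}\}$, but for a lower bound on $(r^{k-1}v')'$ I instead argue directly: since $v'\le 0$, we have $k\frac{v'}{r}\le v''+(k-1)\frac{v'}{r}$ is again not automatic. The right move is to note ${\cal P}_k^+(D^2v)= v''+(k-1)\frac{v'}{r}$ whenever $v''\ge \frac{v'}{r}$, and ${\cal P}_k^+(D^2v)=k\frac{v'}{r}\le 0$ otherwise; in the latter regime $\mu c_0 r^{-\gamma}\le -k\frac{v'}{r}$, i.e. $-v'\ge \frac{\mu c_0}{k}r^{1-\gamma}$. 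Integrating this lower bound for $-v'$ over $(r,r_0)$ and using $\gamma>2$ gives $v(r)\ge v(r_0)+\frac{\mu c_0}{k}\cdot\frac{r^{2-\gamma}-r_0^{2-\gamma}}{\gamma-2}\to+\infty$ as $r\to 0$ on any subinterval where this regime holds; in the other regime $(r^{k-1}v')'=r^{k-1}{\cal P}_k^+(D^2v)\le -\mu c_0 r^{k-1-\gamma}$, so integrating from $r$ to $r_0$ and using $k-1-\gamma<-1$ (since $\gamma>2\ge 2$, $k\ge 2$ gives $k-1-\gamma< k-3\le$ ... need $k-\gamma<0$) forces $r^{k-1}v'(r)\to -\infty$ and then $v(r)\to+\infty$ as well. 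In all cases $v(r)\to+\infty$, which by itself is consistent; the real contradiction must come from pushing further.

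The honest and simplest route—and the one I would actually write—is: integrate $(r^{k-1}v')'\le -\mu c_0 r^{k-1-\gamma}$ twice. From the first integration, $r^{k-1}v'(r)\le r_0^{k-1}v'(r_0)-\frac{\mu c_0}{\gamma-k}\bigl(r^{k-\gamma}-r_0^{k-\gamma}\bigr)$ when $k\ne\gamma$ (a logarithm when $k=\gamma$); since $\gamma>2$, the dominant term as $r\to 0$ is $+\frac{\mu c_0}{\gamma-k}r_0^{k-\gamma}$ if $k<\gamma$, giving $v'(r)\le C r^{1-k}$, and a second integration then shows $v(r)\le v(r_0)+\tilde C(r^{2-k}-r_0^{2-k})$ (or $\tilde C\ln(r_0/r)$ if $k=2$), which is compatible with $v\to\infty$; but crucially, if instead $k>\gamma$—impossible here since one can take $\gamma$ close to $2$ while $k$ is fixed $\ge 2$, so for $\gamma$ near $2$ we may have $k\ge 2>\gamma$ excluded because $\gamma>2$, hence $k\ge 3$ forces nothing... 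The cleanest contradiction avoiding these case splits is to test against the explicit exact supersolution barrier: I claim $w(r)=A-B r^{2-\gamma}$ with suitable $A,B>0$ satisfies ${\cal P}_k^+(D^2w)+\mu w r^{-\gamma}\ge$ (wrong sign). The cleanest of all: since $\gamma>2$, fix $\alpha\in(0,\gamma-2)$ and consider $\phi(r)=r^{-\alpha}$; compute ${\cal P}_k^+(D^2\phi)=\alpha(\alpha+1)r^{-\alpha-2}-(k-1)\alpha r^{-\alpha-2}$ in the convex regime ($\phi''\ge\phi'/r$, which holds), $=\alpha(\alpha+2-k)r^{-\alpha-2}$, and then ${\cal P}_k^+(D^2\phi)+\mu\phi r^{-\gamma}=\alpha(\alpha+2-k)r^{-\alpha-2}+\mu r^{-\alpha-\gamma}$; since $-\alpha-\gamma<-\alpha-2$, the singular term dominates near $0$ and is positive, so $\phi$ is a \emph{subsolution} near $0$, not a supersolution—this is consistent with $\bar\lambda_\gamma=0$ but does not by itself prove it. Thus I would instead conclude by combining the supersolution $v$ with such a subsolution via the maximum principle of Theorem~\ref{maxp1} applied on a small ball $B_{r_0}$: normalizing so that $v\ge \phi$ on $\{r=r_0\}$ and using that $v$ is a supersolution while $\lambda\phi$ (for large $\lambda$) would be forced below $v$, one derives that $v$ must blow up faster than any power, contradicting that $r^{k-1}v'$ is monotone with a finite-order singularity. \emph{The main obstacle} is exactly this last step: controlling the rate of blow-up of $v$ at $0$ sharply enough to contradict the ODE structure, and handling the two regimes of ${\cal P}_k^+$ uniformly. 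I expect the actual proof in the paper sidesteps the case analysis by working directly with the explicit ODE satisfied where $v''\ge v'/r$ and showing that the regime $v''<v'/r$ (where ${\cal P}_k^+(D^2v)=kv'/r$) cannot persist, so that $v$ solves $v''+(k-1)v'/r\le -\mu c_0 r^{-\gamma}$ near $0$, whose solutions with $\gamma>2$ are incompatible with boundedness of $v$ near any interior point—wait, $v$ need not be bounded—so the genuine contradiction is with $v$ being \emph{defined and finite} on all of $(0,1)$ together with $v(1)$ finite and the two integrations forcing $v'(1^-)$ or $v(r_0)$ to be infinite. I would present it as: integrate twice from $r$ to $r_0$, observe the resulting lower bound on $v(r_0)$ in terms of $\mu c_0$ and $r_0$ is violated once $r_0$ is chosen small, whence the contradiction, giving $\bar\lambda_\gamma=0$.
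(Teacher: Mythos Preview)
Your proposal contains a genuine gap stemming from a single misconception. You write that ``${\cal P}_k^+(D^2v)\ge v''+(k-1)\frac{v'}{r}$ is false in general'', and then spend the rest of the attempt wrestling with a case analysis on the two regimes of ${\cal P}_k^+$. But the inequality you dismiss is in fact \emph{always true}: for a radial $v$ one has
\[
{\cal P}_k^+(D^2v)=\max\Bigl\{\,v''+(k-1)\frac{v'}{r},\ k\,\frac{v'}{r}\,\Bigr\}\ \ge\ v''+(k-1)\,\frac{v'}{r}\,,
\]
since when $v''\ge v'/r$ the two expressions coincide, and when $v''<v'/r$ one has $k\,v'/r>v''+(k-1)v'/r$. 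This single observation is exactly what the paper uses: from ${\cal P}_k^+(D^2v)+\mu v r^{-\gamma}\le 0$ one immediately gets the \emph{linear} differential inequality
\[
v''+(k-1)\,\frac{v'}{r}+\mu\,\frac{v}{r^\gamma}\ \le\ 0\qquad\text{on }(0,1),
\]
with $v>0$ and (by Lemma~\ref{lem1}) $v'\le 0$. At that point the problem is reduced to the radial Laplacian in dimension $k$, and the paper simply invokes the corresponding nonexistence argument from \cite{BDL1} (Theorem~1.3 there) to reach the contradiction.

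Without this reduction, your case analysis never closes. In the regime $v''<v'/r$ you correctly get $-v'\ge \frac{\mu c_0}{k}r^{1-\gamma}$ and hence $v(r)\to+\infty$; in the other regime you get $(r^{k-1}v')'\le -\mu c_0 r^{k-1-\gamma}$. But you then observe---correctly---that $v(r)\to+\infty$ is \emph{not} a contradiction, and the subsequent attempts (power barriers, comparison with $\phi=r^{-\alpha}$, invoking Theorem~\ref{maxp1}) do not land. The two integrations you sketch at the end produce upper or lower bounds on $v$ that are mutually compatible with an unbounded supersolution, so no contradiction emerges from the argument as written. The fix is simply to use the inequality above and work with the clean second-order ODE inequality on all of $(0,1)$, from which a standard two-integration argument (or the cited result) yields the contradiction.
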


\begin{proof}
By contradiction, let us assume that for some $\mu>0$ there exists a positive radial function $v\in {\cal C}^2(B(0,1)\setminus \{0\})$ satisfying 
$$
{\cal P}^+_k(D^2v)+ \mu \frac{v}{r^\gamma}\leq 0\qquad \hbox{ in } B(0,1)\setminus \{0\}\, .
$$
Then, by Lemma \ref{lem1},  we have $v'\leq 0$. Moreover, by definition of ${\cal P}^+_k$, we have also that $v$ satisfies 
$$
v''+(k-1) \frac{v'}{r} +\mu \frac{v}{r^\gamma}\leq 0\qquad \hbox{ for } 0<r<1\, .
$$
Hence, we can apply e.g. the same proof of Theorem 1.3 in \cite{BDL1} in order to reach a contradiction.

\end{proof}
%%%%%%%%%%%%%%%%%%%%%%%%%%%%%%%%%%%%%%%%%%%%%%%%%%%%%%%%%%%%%%%%%%%%%%%%%%%%%%%%%%%%%%%%%%%%%%%%%%%%%%%%%%%%%%%%%%%%%%%%%%%%%%%%%%%%

 \subsection{The ${\cal P}_1^+ $ case}   
 
 We consider here the eigenvalue problem for operator ${\cal P}^+_1$, which has to be studied separately because of the lack of validity of the maximum principle given by Theorem \ref{maxp1}. 
 
 Nonetheless, for $\gamma<1$, we still have a variational characterization of $\bar \lambda_\gamma$ analogous to Proposition \ref{variational}. In this case, we set
 $$
 \lambda_{\gamma, {\rm var}} \, := \inf_{\{ v\in H^1((0,1))\, : v(1)=0\, ,\ \int_0^1 v^2(r) r^{-\gamma}dr=1\}} \int_0^1 (v'(r))^2dr\, .
 $$
 Let us observe that, for $\gamma<1$, one has $\lambda_{\gamma, {\rm var}}>0$, as it is easily follows by H\"older inequality. 
 
 Indeed, for any $v\in C^1([0,1])$ satisfying $v(1)=0$, one has
 $$
 |v(r)|=\left| \int_r^1 v'(s)\, ds\right| \leq \left( \int_r^1 (v'(s))^2\, ds \right)^{1/2} (1-r)^{1/2}\, ,
 $$
 which yields
 $$
 \int_0^1 v^2(r)r^{-\gamma}dr\leq \left(\int_0^1 (v'(r))^2dr\right)\, \int_0^1 (1-r)r^{-\gamma}dr= \frac{1}{(1-\gamma)(2-\gamma)} \int_0^1 (v'(r))^2dr\, .
 $$
 Hence, we have
 $$
 \lambda_{\gamma, {\rm var}} \geq (1-\gamma)(2-\gamma)\, .
 $$

   \begin{theo}\label{nonexik=1} 
   
   \begin{itemize}
\item[(i)]   If  $\gamma <1$, then $\bar \lambda_\gamma =\lambda_{\gamma, {\rm var}}>0$ and problem \eqref{exieigen}  with $k=1$ has a positive radial solution belonging to ${\cal C}^2(B(0,1)\setminus \{0\})\cap {\cal C}^1(\overline{B(0,1)})$.
\item[(ii)] If $\gamma \geq 1$, then $\bar \lambda_\gamma=0$.
\end{itemize}
    \end{theo}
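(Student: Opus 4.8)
For part (i), the plan is to mimic the variational argument of Proposition \ref{variational}, but now in the one-dimensional setting where ${\cal P}_1^+(D^2u)$ applied to a radial function $u$ with $u'\le 0$ is simply $\max\{u'',\,u'/r\}$. First I would note that a minimizer $v$ of $\lambda_{\gamma,{\rm var}}$ exists by standard compactness in $H^1((0,1))$ (the weight $r^{-\gamma}$ with $\gamma<1$ is integrable, and the Hölder computation above gives the compact embedding needed), and that $v$ can be taken positive. The Euler--Lagrange equation reads $-v'' = \lambda_{\gamma,{\rm var}}\,v\,r^{-\gamma}$ in the distributional sense, whence $v\in {\cal C}^2((0,1))$. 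As in Proposition \ref{variational}, monotonicity considerations force $v'\le 0$ on $(0,1)$: since $v''=-\lambda_{\gamma,{\rm var}}v r^{-\gamma}<0$, $v'$ is decreasing, so if $v'$ were ever positive it would stay positive as $r\to 0$, contradicting the boundary behaviour/positivity of $v$; hence $v'\le 0$. Then $v''<0\le$ anything $\le v'/r$?  — here one must check the sign of $v''-v'/r$. With $v'\le 0$ we have $v'/r\le 0$, and $v''<0$, so the comparison is not automatic; instead I would argue directly that, because $v''\le 0 $ and... — more carefully: from $v''<0$ we get that wherever $v'\le 0$, the larger of $v''$ and $v'/r$ could be either one. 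The cleaner route is to observe that $v$ solves $-v''=\lambda v r^{-\gamma}$ and to check that the ${\cal C}^1$ extension up to $r=0$ works (for $\gamma<1$, $v'(r)=-\lambda\int_0^r v s^{-\gamma}ds\to 0$ as $r\to0$, so $v\in {\cal C}^1(\overline{B(0,1)})$ with $v'(0)=0$), and then near $0$ we have $v'\approx -\lambda v(0) r^{1-\gamma}/(1-\gamma)$ so $v'/r\approx -\lambda v(0)r^{-\gamma}/(1-\gamma)\to-\infty$ while $v''=-\lambda v r^{-\gamma}\to-\infty$ with the ratio $v'/(rv'')\to 1/(1-\gamma)>1$, giving $v'/r< v''$ near $0$; and globally, integrating $-v''=\lambda v r^{-\gamma}$ and using that $v r^{-\gamma}$ need not be monotone one instead uses $v'(r)=-\lambda\int_0^r v(s)s^{-\gamma}ds\ge -\lambda v(r) r^{-\gamma}\int_0^r s^{\gamma}\cdot... $ — I will in fact reproduce the exact inequality chain of Proposition \ref{variational} with $k=1$: $v'(r)r^{0}\le -\lambda v(r)r^{-\gamma}\int_0^r ds$, i.e. $v'\le -\lambda v r^{1-\gamma}$, hence $v'/r\le v'' $ since $v'' = v''$ and $v'' + 0\cdot v'/r = v''$... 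Concretely, $v'' = -\lambda v r^{-\gamma}= (v')/r\cdot\big(\text{something}\big)$: from $v'\le -\lambda v r^{1-\gamma}$ we get $v'/r\le -\lambda v r^{-\gamma}=v''$. Thus ${\cal P}_1^+(D^2v)=v''= -\lambda_{\gamma,{\rm var}} v r^{-\gamma}$, so $v$ is an admissible test function in the definition \eqref{eigenbnv+} and $\bar\lambda_\gamma\ge\lambda_{\gamma,{\rm var}}$. For the reverse inequality I cannot invoke Theorem \ref{maxp1} (it fails for $k=1$), so instead I would argue directly: given any competitor $w>0$ with ${\cal P}_1^+(D^2w)+\mu w r^{-\gamma}\le0$, Lemma \ref{lem1}(i) gives $w'\le 0$, hence ${\cal P}_1^+(D^2w)=\max\{w'',w'/r\}\ge w''$, so $w''+\mu w r^{-\gamma}\le 0$; multiplying by a suitable test function and integrating, or testing the linear inequality against $w$ itself, yields $\mu\le \lambda_{\gamma,{\rm var}}$, whence $\bar\lambda_\gamma\le\lambda_{\gamma,{\rm var}}$.

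For part (ii), assume $\gamma\ge 1$ and suppose for contradiction that $\bar\lambda_\gamma>0$, so there is $\mu>0$ and a positive radial $v\in{\cal C}^2(B(0,1)\setminus\{0\})$ with ${\cal P}_1^+(D^2v)+\mu v r^{-\gamma}\le 0$. By Lemma \ref{lem1}(i), $v'\le0$, and as above ${\cal P}_1^+(D^2v)\ge v''$, so
$$
v'' + \mu\frac{v}{r^\gamma}\le 0\qquad\text{in }(0,1),\qquad v'\le 0.
$$
Now $v'(r)-v'(1)=-\int_r^1 v''(s)\,ds\ge \mu\int_r^1 v(s)s^{-\gamma}ds$. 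Since $v$ is positive and $v'\le0$, $v$ is non-increasing, so $v(s)\ge v(1/2)=:c_0>0$ for $s\le 1/2$, and for $\gamma\ge1$ the integral $\int_r^{1/2}s^{-\gamma}ds$ diverges as $r\to0$ (logarithmically if $\gamma=1$, like $r^{1-\gamma}$ if $\gamma>1$). Hence $v'(r)\to+\infty$ as $r\to0$, contradicting $v'\le0$. This is the same mechanism as in the proof of Theorem \ref{gamma>2} (and Theorem 1.3 of \cite{BDL1}), now in dimension $1$.

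The main obstacle I anticipate is part (i)'s identity $\bar\lambda_\gamma=\lambda_{\gamma,{\rm var}}$: the inequality $\bar\lambda_\gamma\ge\lambda_{\gamma,{\rm var}}$ requires carefully verifying that the variational minimizer $v$ genuinely satisfies $v''\ge v'/r$ (so that ${\cal P}_1^+(D^2v)=v''$ rather than $v'/r$) — this is exactly the computation performed in Proposition \ref{variational}, which carries over verbatim with $k=1$ since nothing there used $k\ge2$, and additionally establishing the ${\cal C}^1$ regularity up to the origin, which for $k=1$ and $\gamma<1$ follows from $v'(r)=-\lambda_{\gamma,{\rm var}}\int_0^r v(s)s^{-\gamma}\,ds=O(r^{1-\gamma})\to0$. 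The reverse inequality $\bar\lambda_\gamma\le\lambda_{\gamma,{\rm var}}$ needs the substitute-for-the-maximum-principle argument via direct testing, exploiting that any supersolution of the ${\cal P}_1^+$ inequality is automatically a supersolution of the one-dimensional equation $v''+\mu v r^{-\gamma}\le0$ with $v'\le0$, so that the standard variational lower bound for the linear operator applies.
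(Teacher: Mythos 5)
Your part (ii) is correct and uses the same mechanism as the paper: the supersolution inequality for ${\cal P}^+_1$ forces $v',v''\le 0$ and $v''\le -\mu v r^{-\gamma}$, and integrating shows that $v'$ diverges as $r\to 0$, contradicting the sign condition on $v'$ (or on $v''$). The inequality $\bar\lambda_\gamma\ge\lambda_{\gamma,{\rm var}}$ in part (i) is also handled correctly; after some hesitation you land on exactly the paper's chain $v'(r)=-\lambda_{\gamma,{\rm var}}\int_0^r v\,s^{-\gamma}\,ds\le -\lambda_{\gamma,{\rm var}}v(r)r^{1-\gamma}$, hence $v'/r\le v''$, together with the ${\cal C}^1$ extension via $v'(r)=O(r^{1-\gamma})\to 0$.

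The gap is in the reverse inequality $\bar\lambda_\gamma\le\lambda_{\gamma,{\rm var}}$ in part (i). You rightly observe that Theorem \ref{maxp1} is unavailable when $k=1$, but the replacement you sketch (``multiplying by a suitable test function and integrating, or testing the linear inequality against $w$ itself'') does not close as stated. Testing $w''+\mu w r^{-\gamma}\le 0$ against $w$ itself fails because the supersolution $w$ in \eqref{eigenbnv+} need not satisfy $w(1)=0$ (so it is not a competitor for $\lambda_{\gamma,{\rm var}}$, and the boundary term $-w'(1)w(1)\ge 0$ enters with the wrong sign), nor need $w$ have finite energy. Testing against a compactly supported test function loses control at $r=0$, and the density/cutoff argument needed to pass to the full competitor class diverges since the minimizer does not vanish at the origin. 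Testing against the variational minimizer $u$ can be made to work, but then one must control the term $-u'(\epsilon)v(\epsilon)$ as $\epsilon\to0$, which requires first proving that $v$ is bounded (this does follow from concavity, $v''\le 0$ and $v'\le0$, but you have not observed it). The paper sidesteps all of this with a pointwise quotient-supremum comparison, exactly as in the proof of Theorem \ref{maxp1} but now built from the explicit form ${\cal P}^+_1(D^2v)=\max\{v'',v'/r\}$: one deduces $v',v''\le 0$ and $v(0):=\lim_{r\to0}v(r)\in(0,+\infty]$, normalizes $\sup u/v=1$, and rules out a maximizing sequence accumulating either at some $\bar r>0$ (via $u''(\bar r)\le v''(\bar r)$) or at $0$ (via $(u-v)''>0$ near the origin together with $(u-v)'(0)\ge 0$). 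This pointwise comparison is the ingredient your proposal is missing.
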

 \begin{proof}
{\sl (i)}  By classical methods in the calculus of variations, the infimum defining $\lambda_{\gamma, {\rm var}}$ is a minimum. Thus, there exists $u\in H^1((0,1))$, with $u>0$ in $(0,1)$, satisfying in the distributional sense
 $$
 u''= -\lambda_{\gamma, {\rm var}} u\, r^{-\gamma}\qquad \hbox{ in } (0,1)\, ,
 $$
 and the boundary conditions
 $$
 u'(0)=0\, ,\quad u(1)=0\, .
 $$
 As a consequence,  $u\in {\cal C}^2([0,1])$  satisfies $u''\, ,\ u'\leq 0$ in $[0,1]$. Moreover, by monotonicity, one has
 $$
 u'(r)=\int_0^r u''(s)\, ds =- \lambda_{\gamma, {\rm var}} \int_0^r u(s) s^{-\gamma}ds\leq - \lambda_{\gamma, {\rm var}} u(r) r^{1-\gamma} = u''(r) r\, .
 $$
 Therefore, $u\in {\cal C}^2(B(0,1)\setminus \{0\})\cap {\cal C}^1(\overline{B(0,1)})$ is a positive radial solution of 
 $$
 \left\{ \begin{array}{cl}
 \displaystyle {\cal P}^+_1 (D^2u) + \lambda_{\gamma, {\rm var}} u r^{-\gamma}=0 & \hbox{ in } B(0,1)\setminus \{0\}\\[2ex]
 \displaystyle u=0 & \hbox{ on } \partial B(0,1)
 \end{array}\right.
 $$
 and this implies, by definition, that $\bar \lambda_\gamma\geq \lambda_{\gamma {\rm var}}$.
 
 Arguing by contradiction, let us suppose  that $\bar \lambda_\gamma> \lambda_{\gamma {\rm var}}$. Then, there exist $\mu >\lambda_{\gamma {\rm var}}$ and $v\in {\cal C}^2(B(0,1)\setminus \{0\})$ positive and radial such that
 $$
 {\cal P}^+_1 (D^2v) + \mu v r^{-\gamma}\leq 0  \qquad \hbox{ in } B(0,1)\setminus \{0\}\, .
 $$
 Without loss of generality we can assume that $v(r)>0$ for $r\in (0,1]$. Moreover, since $ {\cal P}^+_1 (D^2v)=\max \left\{ v''\, ,\ \frac{v'}{r}\right\}$, we have that $v$ satisfy both the inequalities
 $$
 v'' + \mu v r^{-\gamma}\leq 0\, ,\quad v'+ \mu v r^{1-\gamma}\leq 0\qquad \hbox{ for } 0<r<1\, .
 $$
 In particular, we have that $v'\, ,\ v''\leq 0$ and there exists the limit $\lim_{r\to 0} v(r) =:\, v(0)\in (0,+\infty]$. We can now argue as in the proof of Theorem \ref{maxp1} in order to reach a contradiction. For, up to a normalization, we can assume that
 $$
 \sup_{B(0,1)\setminus \{0\}} \frac{u}{v}=1\, .
 $$
Let $\bar r\in [0,1)$ be any limit point of a maximizing sequence.  If $\bar r>0$, then $u(\bar r)=v(\bar r)$ and $u''(\bar r)\leq v''(\bar r)$, so that
$$
 \mu  v(\bar r) {\bar r}^{-\gamma} \leq -v''(\bar r))\leq - u''(\bar r))= \lambda_{\gamma,{\rm var}} u(\bar r) {\bar r}^{-\gamma}\, ,
 $$
 which gives the  contradiction  $\mu \leq \lambda_{\gamma,{\rm var}}$.  If $\bar r=0$,  then $v(0)<+\infty$, $u(0)= v(0)$,  and $\mu  v(0)-\lambda_{\gamma{\rm var}} u(0) >0$. Hence, by continuity, in a neighborhood of zero one has 
 $$(u-v)''(r)\geq \left(\mu v(r)-\lambda_{\gamma{\rm var}} u(r)\right)r^{-\gamma}\geq {1\over 2} \left( \mu  v(0)- \lambda_{\gamma, {\rm var}} u(0)\right) r^{-\gamma}>0\, .$$
Since $(u-v)'(0)\geq 0$, this implies that $(u-v)^\prime(r) \geq 0$ in a neighborhood of zero, so that  $u-v$ cannot achieve its  maximum at zero. The   contradictions reached in both cases prove that $\bar \lambda_\gamma= \lambda_{\gamma {\rm var}}$ and the variational solution $u$ actually is a smooth solution of problem \eqref{exieigen} with $k=1$.
\medskip

{\sl (ii)} By contradiction, let us assume that there exist  $\mu >0$ and $v\in {\cal C}^2(B(0,1)\setminus \{0\})$ positive and radial such that
 $$
 {\cal P}^+_1 (D^2v) + \mu v r^{-\gamma}\leq 0  \qquad \hbox{ in } B(0,1)\setminus \{0\}\, .
 $$
Then, as before, we have that $v$ in particular satisfies $v', v''\leq 0$ and
$$
v''\leq - \mu v r^{-\gamma}\qquad \hbox{ for } 0<r<1\, .
$$
Hence, for $r>0$ small, one has 
$v^{ \prime \prime } \leq -c r^{-\gamma}$ for some $c>0$. This implies that, for $r>0$ sufficiently small and for some $c_1>0$, one has
$$
v'(r)\leq \left\{ \begin{array}{ll}
-c_1 r^{1-\gamma} & \hbox{ if } \gamma>1\\[2ex]
c_1 \ln r & \hbox{ if } \gamma=1
\end{array} \right.
$$
In both cases,  this yields  $\lim_{r\to 0} v'(r)=-\infty$, which is a contradiction to $v''\leq 0$.

 \end{proof}

%%%%%%%%%%%%%%%%%%%%%%%%%%%%%%%%%%%%%%%%%%%%%%%%%%%%%%%%%%%%%%%%%%%%%%%%%%%%%%%%%%%%%%%%%%%%%%%%%%%%%%%%%%%%%%%%%%%%%%%%%%%%%
 \subsection{The ${\cal P}_k^- $ case}
  As before, we introduce 
  \begin{equation}\label{eigenbnv-}\begin{array}{c}
 \displaystyle   \underline{\lambda}_\gamma \, : = \sup \left\{ \mu\in \R\, : \,  \exists \, v \in { \cal C}^2 (B(0,1)\setminus\{0\}) \hbox{ s.t. } v>0\, , v \hbox{ is radial, }\right.\\[1ex]
 \qquad\qquad\qquad\qquad  \left. {\cal P}_k^-( D^2 v) + \mu v r^{-\gamma} \leq 0 \hbox{ in } B(0,1)\setminus\{0\}\right\}
 \end{array}
 \end{equation}
 but now  the results deeply differ from   the 
${\cal P}_k^+ $ case.
 \begin{theo} \label{Pk-}
 For any $\gamma \geq 0$   and any  $k<N$, one has $\ulambda=+\infty$. 
  \end{theo}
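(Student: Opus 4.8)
The plan is to show that for \emph{every} $\mu>0$ there is a positive radial $v\in\mathcal C^2(B(0,1)\setminus\{0\})$ with $\Pkm(D^2v)+\mu v r^{-\gamma}\le 0$ in $B(0,1)\setminus\{0\}$; by the very definition \eqref{eigenbnv-} of $\ulambda$ this gives $\ulambda\ge\mu$ for all $\mu>0$, hence $\ulambda=+\infty$. In fact the function we exhibit is an exact solution of the homogeneous equation \eqref{Pmmu}.

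Since, on radial functions, $\Pkm(D^2v)=k\frac{v'}{r}$ as soon as $v''\ge\frac{v'}{r}$ (here we use $k<N$, so at least $k$ of the eigenvalues of $D^2v$ equal $\frac{v'}{r}$), it is natural to look for $v$ solving the first order equation $k\frac{v'}{r}=-\mu v r^{-\gamma}$, that is $v=e^{\ph}$ with $\ph'(r)=-\frac{\mu}{k}r^{1-\gamma}$. Explicitly one takes
$$
v(r)=\exp\!\Big(-\frac{\mu}{k(2-\gamma)}\,r^{2-\gamma}\Big)\quad\text{if }\gamma\neq2,\qquad v(r)=r^{-\mu/k}\quad\text{if }\gamma=2.
$$
In every case $v$ is positive and $\mathcal C^2$ on $B(0,1)\setminus\{0\}$ with $v'<0$ (it blows up at the origin when $\gamma\ge 2$ and extends continuously with $v(0)=1$ when $\gamma<2$, which is irrelevant for \eqref{eigenbnv-}). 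The one computation to perform is the check that we are in the right branch, namely $v''-\frac{v'}{r}\ge 0$: writing $v=e^{\ph}$ this amounts to $\ph''+(\ph')^2-\frac{\ph'}{r}\ge 0$, and substituting $\ph'=-\frac{\mu}{k}r^{1-\gamma}$ gives
$$
\ph''+(\ph')^2-\frac{\ph'}{r}=\frac{\mu\gamma}{k}\,r^{-\gamma}+\frac{\mu^2}{k^2}\,r^{2-2\gamma}\ \ge\ 0,
$$
precisely because $\gamma\ge 0$ and $\mu>0$. Hence $\Pkm(D^2v)=k\frac{v'}{r}$ and, by construction, $\Pkm(D^2v)+\mu v r^{-\gamma}=k\frac{v'}{r}+\mu v r^{-\gamma}=0$, which is even better than the required inequality.

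There is no serious obstacle here; the content of the statement is precisely the contrast with $\Pkp$. For $\Pkp$ a positive radial eigenfunction is forced into the second order branch $u''+(k-1)\frac{u'}{r}$, which behaves like the Laplacian in dimension $k$ and therefore bounds the eigenvalue; for $\Pkm$ the operator may instead collapse onto the first order term $k\frac{u'}{r}$, and the scalar first order equation $k\frac{v'}{r}=-\mu v r^{-\gamma}$ is solvable for \emph{all} $\mu>0$. The only point requiring a little care is the sign verification $v''\ge\frac{v'}{r}$, which guarantees that we genuinely stay in that first order regime, and it holds for every $\gamma\ge 0$.
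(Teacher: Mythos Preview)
Your proof is correct and is essentially the same as the paper's: both exhibit, for every $\mu>0$, the explicit positive radial function $v(r)=\exp\!\big(-\frac{\mu}{k(2-\gamma)}r^{2-\gamma}\big)$ for $\gamma\neq 2$ and $v(r)=r^{-\mu/k}$ for $\gamma=2$, verify that $v''-\frac{v'}{r}\ge 0$ so that $\Pkm(D^2v)=k\frac{v'}{r}$, and conclude $\Pkm(D^2v)+\mu v r^{-\gamma}=0$. The only cosmetic difference is that you unify the computation via $v=e^{\ph}$ with $\ph'=-\frac{\mu}{k}r^{1-\gamma}$, whereas the paper treats the cases $\gamma\neq 2$ and $\gamma=2$ separately.
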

  
  \begin{proof}
  Let us first consider the case $\gamma \neq 2$. 

For $\mu >0$, let $v(r) = e^{-\frac{\mu}{k(2-\gamma)} r^{2-\gamma}}$. Then 
$v$ satisfies $$ {v^\prime \over r} = -{ \mu \over k} vr^{-\gamma}$$ and 
 $$ v^{ \prime \prime} -{v^\prime \over r} = \frac{\mu}{k} \gamma  v r^{-\gamma} + \frac{\mu^2}{k^2} v r^{2-2\gamma} \geq 0\, .$$
Since $k<N$, then we have
$$ {\cal P}_k^-( D^2 v)= k \frac{v'}{r}=- \mu v r^{-\gamma}\qquad \hbox{ in } B(0,1)\setminus \{0\}\, ,$$
 which implies that $\ulambda \geq \mu$. By the arbitrariness of $\mu>0$, one gets the desired result.  
     
For $\gamma = 2$, let us consider the function  $v(r)=  r^{-\mu \over k}$, which satisfies $v^\prime \leq 0\leq v^{ \prime \prime }$  and 
 $$\frac{v^\prime}{ r} = -\frac{\mu}{ k} v r^{-2}\, .$$
Hence,  $v$ is a positive solution in $B(0,1)\setminus \{0\}$ and, as before, we obtain $\ulambda \geq \mu$ for any $\mu>0$.

 \end{proof} 
 
 The explicit solutions exhibited in the above proof can be used as barrier functions in order  to  prove the following  maximum principle.
  \begin{prop} \label{maxpPk-}
For  $\mu>0$, let $u\in C^2(B(0,1)\setminus \{0\})\cap C(\overline{B(0,1)} \setminus \{0\})$ be a radial solution of 
       $$ { \cal P}_k^- ( D^2 u(r)) + \mu u(r) r^{-\gamma} \geq 0\qquad \hbox{ in } B(0,1)\setminus \{0\}\, ,$$ 
satisfying  $u(1)\leq 0$.
In any of the following cases
\begin{enumerate}       
     \item $\gamma <2$, $k\geq 2$  and $u$ is bounded
\item  $\gamma = 2$ and there exists some $\tau >0$ such that $u(r) r^\tau$ is bounded
        \item $\gamma >2$ and there exists some constant $c>0$ such  that
         $u(r) e^{ -c r^{2-\gamma}} $ is bounded
\end{enumerate}         
one has   $u\leq 0$ in $\overline{B(0,1)} \setminus \{0\}$. 
\end{prop}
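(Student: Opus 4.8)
The plan is to run a comparison argument against suitable multiples of the explicit barriers produced in the proof of Theorem~\ref{Pk-}, exploiting the fact (recalled in the Introduction) that on radial functions $\Pkm$ is the minimum of its two branches, so in particular $\Pkm(D^2u)(r)\le k\frac{u'(r)}{r}$. For $\nu>0$ set $v_\nu(r):=\exp\!\big(-\tfrac{\nu}{k(2-\gamma)}\,r^{2-\gamma}\big)$ when $\gamma\neq 2$ and $v_\nu(r):=r^{-\nu/k}$ when $\gamma=2$; by the computations in the proof of Theorem~\ref{Pk-}, each $v_\nu$ is a positive radial $C^2$ function on $B(0,1)\setminus\{0\}$ with $v_\nu''\ge \frac{v_\nu'}{r}$, hence (using $k<N$) $\Pkm(D^2 v_\nu)=k\frac{v_\nu'}{r}=-\nu\,v_\nu\,r^{-\gamma}$. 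I would then fix $\nu=\mu'$ with $\mu'>\mu$, chosen large enough according to the three cases so that $u/v_{\mu'}$ is bounded above on $(0,1)$ and, in cases~2 and 3, tends to $0$ as $r\to 0^+$: in case~1 any $\mu'>\mu$ works, since $v_{\mu'}$ is then bounded between two positive constants on $(0,1]$ and $u$ is bounded; in case~2 take $\mu'>\max\{\mu,k\tau\}$, so that $u(r)/v_{\mu'}(r)=\big(u(r)r^\tau\big)\,r^{\mu'/k-\tau}\to 0$; in case~3 take $\mu'>\max\{\mu,ck(\gamma-2)\}$, so that $u(r)/v_{\mu'}(r)=\big(u(r)e^{-cr^{2-\gamma}}\big)\exp\!\big((c-\tfrac{\mu'}{k(\gamma-2)})r^{2-\gamma}\big)\to 0$. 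Put $v:=v_{\mu'}$ and $M:=\sup_{(0,1)}u/v\in\R$.

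Suppose, for contradiction, that $u\not\le 0$ on $\overline{B(0,1)}\setminus\{0\}$. Since $u(1)\le 0$, there is $r^*\in(0,1)$ with $u(r^*)>0$, whence $M\ge u(r^*)/v(r^*)>0$. Let $w:=u-Mv\in C^2((0,1))$, so that $w\le 0$ on $(0,1)$ and $w(1)=u(1)-Mv(1)\le -Mv(1)<0$. The heart of the argument is a first-order differential inequality for $w$: since $u$ is a radial subsolution, $k\frac{u'(r)}{r}\ge \Pkm(D^2u)(r)\ge -\mu\,u(r)r^{-\gamma}$, while $k\frac{v'(r)}{r}=-\mu'\,v(r)r^{-\gamma}$; subtracting $M$ times the latter identity from the former and using $u=w+Mv$ gives
$$
k\,\frac{w'(r)}{r}+\mu\,\frac{w(r)}{r^{\gamma}}\ \ge\ (\mu'-\mu)\,M\,\frac{v(r)}{r^{\gamma}}\ >\ 0\qquad\text{for }0<r<1.
$$
Because $w(r)\le 0$ and $\mu>0$, the left-hand side can be positive only if $k\,w'(r)/r>0$; hence $w'(r)>0$ throughout $(0,1)$, so $w$ is strictly increasing there, and in particular $w(r)<w(1)<0$ for every $r\in(0,1)$.

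To finish, take a maximizing sequence $r_n\in(0,1)$ with $u(r_n)/v(r_n)\to M$ and, after passing to a subsequence, $r_n\to\bar r\in[0,1]$, and rule out each value of $\bar r$. If $\bar r=1$, continuity of $u$ and $v$ at $1$ gives $u(r_n)/v(r_n)\to u(1)/v(1)\le 0<M$, a contradiction. If $\bar r\in(0,1)$, then $w(r_n)=v(r_n)\big(u(r_n)/v(r_n)-M\big)\to v(\bar r)(M-M)=0$ by continuity of $w$ on $(0,1)$, contradicting $w(\bar r)<w(1)<0$. Finally, if $\bar r=0$: in cases~2 and 3 one has $u(r_n)/v(r_n)\to 0\neq M$; in case~1, $v(r_n)\to v(0^+)=1$ forces $w(r_n)=v(r_n)\big(u(r_n)/v(r_n)-M\big)\to 0$, which is incompatible with $w(r_n)<w(1)<0$. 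No value of $\bar r$ is admissible, so $u\le 0$ on $(0,1)$, and by continuity $u\le 0$ on $\overline{B(0,1)}\setminus\{0\}$.

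The only genuinely delicate point is the behaviour at the puncture: one must guarantee that $M=\sup u/v$ is finite and that a maximizing sequence cannot concentrate at the origin, and this is precisely what the three growth hypotheses encode, each matched to the decay or blow-up of the corresponding barrier through the choice of $\mu'$. Apart from that, the strong degeneracy of $\Pkm$ is an asset rather than an obstacle here, since reading off its first-order branch $k\frac{w'}{r}$ turns the maximum principle into an elementary ODE monotonicity statement.
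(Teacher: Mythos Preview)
Your proof is correct, and it follows a genuinely different route from the paper's. Both arguments use the same barriers $v=v_{\mu'}$ from Theorem~\ref{Pk-}, but the paper compares $u$ with $\eta v$ for a carefully chosen $\eta<\sup(u/v)$, tests the second-order operator at an interior maximum of $u-\eta v$, and then, in Case~1 where the maximizing sequence may accumulate at the origin, invokes the inequality ${\cal P}_k^+(D^2(u-\eta v))\ge {\cal P}_k^-(D^2u)-{\cal P}_k^-(D^2(\eta v))>0$ together with Remark~\ref{remsign}(ii) (which is where the hypothesis $k\ge 2$ enters). You instead exploit directly the first-order branch ${\cal P}_k^-(D^2u)\le k\,u'/r$ (valid for every radial $u$ when $k<N$) to obtain the scalar inequality $k\,w'/r+\mu\,w\,r^{-\gamma}>0$ for $w=u-Mv$, and since $w\le 0$ this forces $w'>0$ on all of $(0,1)$, turning the problem into a one-line monotonicity argument. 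This is more elementary: it bypasses the second-order comparison at a maximum point, avoids Remark~\ref{remsign} entirely, and in fact shows that the assumption $k\ge 2$ in Case~1 is not needed. The paper's approach, on the other hand, is closer in spirit to the proof of Theorem~\ref{maxp1} and would transfer more readily to operators without such a clean first-order upper bound.
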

 \begin{proof}
 
\emph{Case 1.}   For $\mu^\prime > \mu$, let us consider the function  $v(r)= e^{-\frac{\mu'}{k (2-\gamma)} r^{2-\gamma}}$, which is a positive radial  solution  of 
        $$ { \cal P}_k^- ( D^2 v) + \mu^\prime  v r^{-\gamma} =  0\qquad \hbox{ in } B(0,1)\setminus \{0\}\, .$$ 
Arguing by contradiction, let us suppose that the quotient function $\frac{u}{v}$ has a positive supremum in $B(0,1)\setminus \{0\}$, that is 
$$\sup_{B(0,1)\setminus \{0\}} { u \over v} = :\, \eta'>0\, .$$
Let us select $0<\eta< \eta'$ such that 
$\eta > \eta^\prime { \mu \over \mu^\prime}$. 

 We observe that the function $u-\eta v$ has a positive supremum in $B(0,1)\setminus \{0\}$. Let $\bar r\in [0,1)$ be any limit point of a maximizing sequence.  If $\bar r >0$, then we have 
       $$-\mu u( \bar r) {\bar r}^{-\gamma} \leq { \cal P}^-_k ( D^2 u) \leq { \cal P}_k^- ( D^2 (\eta v)) \leq -\mu^\prime \eta v( \bar r)
       {\bar r}^{-\gamma} \, ,$$ 
 which gives 
 $$
\mu^\prime \eta v( \bar r)\leq \mu u( \bar r) \leq \mu \eta' v(\bar r)
$$
in contradiction with the choice of $\eta$.

On the other hand, if $\bar r=0$, a contradiction can be reached by observing that
$${ \cal P}^+_k ( D^2 (u-\eta v) )\geq 
        { \cal P}_k^- ( D^2 u)-{ \cal P}_k^-( D^2 (\eta v))   \geq -\mu u r^{-\gamma}+ \mu ^\prime \eta v r^{-\gamma} \geq ( \mu^\prime\eta  -\mu\eta^\prime ) v r^{-\gamma} >0\, .$$
 Indeed, in such a case, Remark \ref{remsign}  yields that $(u-\eta v)^\prime(r) \geq 0$ for $r$ small enough, and this  contradicts the fact that $u-\eta v$ has a maximizing sequence converging to zero. 
\medskip
       
\emph{Case 2.} For  $\mu^\prime > \max \{ \mu, \tau k\}$,    let us consider the function  $v(r)=  r^{-\frac{\mu^\prime }{k}}$, which satisfies 
         $$ { \cal P}_k^-( D^2 v) + \mu^\prime v r^{-2} = 0\qquad \hbox{ in } B(0,1)\setminus \{0\}\, .$$ 
 By the assumption  on $u$ and the choice of $\mu'$,  one has $\lim_{ r \rightarrow 0} { u(r) \over v (r)} = 0$. Reasoning as above, we suppose by contradiction that 
 $$\eta^\prime\, : = \sup_{B(0,1)\setminus \{0\}}  { u \over v}>0\, . $$ 
 Then  for any $0<\eta < \eta^\prime$, with  $\eta \mu^\prime > \mu \eta^\prime$,  the supremum of $u-\eta v $ is positive  and achieved at some point $\bar r>0$. This yields a contradiction as in the previous case.
 \medskip
         
\emph{Case 3.} For $\mu^\prime >  \max \{ \mu, ck ( \gamma-2)\}$, we consider the function $v(r)=e^{-\frac{\mu'}{k (2-\gamma)} r^{2-\gamma}}$ and we argue  as in the previous cases. 

  \end{proof}

 %%%%%%%%%%%%%%%%%%%%%%%%%%%%%%%%%%%%%%%%%%%%%%%%%%%%%%%%%%%%%%%%%%%%%%%%%%%%%%%%%%%%%%%%%%%%%%%%%%%%%%%%%%%%%%%%%%%%%%%%%%%%%%%%%%%

  \section{The superlinear  problem in the critical case $\gamma = 2$.}
 \subsection{The superlinear  problem for ${ \cal P}^+_k$, with $k\geq 3$}  
In this section we are concerned with the existence and the asymptotic behavior near zero of positive  radial solutions $u\in {\cal C}^2(B(0,1)\setminus \{0\})$ of the  equation
\begin{equation}\label{superPk+} 
          { \cal P}_k^+ ( D^2 u) + \mu { u\over r^2} = u^p\, ,
 \end{equation}         
with $\mu>0$ and $p>1$. We will always assume that $k\geq 3$ and that $\mu < \bar \lambda_2= \left(\frac{k-2}{2}\right)^2$.

For this problem we can use the known results obtained for the semilinear equation having Laplace operator as principal part in dimension $k$, see \cite{Ci} and \cite{BDL2}. In particular, looking at the results in \cite{BDL2} for $\Lambda=\lambda=1$ and in dimension $k\geq 3$, under the assumption $\mu < \left(\frac{k-2}{2}\right)^2$ we showed that radial solutions of the equation
\begin{equation}\label{linear}
\Delta u +\mu \frac{u}{r^2}=u^p\qquad \hbox{ in } B(0,1)\setminus \{0\} \subset \R^k
\end{equation}
exist and can  have only specific  asymptotic behaviors at zero depending on the mutual values of $p$, $\mu$ and $k$. More precisely, using the notation
\begin{equation}\label{taupm}
\tau^{\pm} \, := \frac{k-2}{2} \pm \sqrt{\left(\frac{k-2}{2}\right)^2-\mu}\, ,
\end{equation}
we proved that there exist solutions $u$ of the semilinear equation \eqref{linear} satisfying
\begin{itemize}
\item[-] either $\lim_{r\to 0} u(r)r^{\frac{2}{p-1}}=0$, and in this case we proved that $u'\leq 0\leq u''$, see Proposition 4.2 in \cite{BDL2},

\item[-] or $\lim_{r\to 0} u(r)r^{\frac{2}{p-1}}>0$, which can occur if and only if either $p<1+\frac{2}{\tau^+}$ or $p>1+\frac{2}{\tau^-}$, and in this case an explicit solution is given by
$$
u(r)=\frac{K}{r^{\frac{2}{p-1}}}\, ,\qquad K=\left[ \left( \frac{2}{p-1}-\tau^-\right) \left( \frac{2}{p-1}-\tau^+\right)\right]^{\frac{1}{p-1}}\, .
$$
\end{itemize}
In any case, the constructed solutions are convex and non increasing functions of $r$. Hence, they satisfy in particular $u''\geq \frac{u'}{r}$, so that they are solutions of \eqref{superPk+}. By applying Theorem 1.1 in \cite{BDL2} we then deduce the following existence result.

\begin{theo}\label{exisuperPk+}
Let  us assume $0<\mu <\left( \frac{k-2}{2}\right)^2$ and $p>1$, and let  $p^\star= 1+\frac{2}{\tau^+}$ and  $p^{ \star \star}=1+\frac{2}{\tau^-}$,  with $\tau^\pm$ defined as in \eqref{taupm}. Then:    
\begin{enumerate}
     \item 
           if $p < p^{ \star }$,  then equation \eqref{superPk+}  has at least a  solution $u$ satisfying 
  \begin{equation}\label{2surp-1} 
   r^{2\over p-1}   u(r) \to  K \qquad \hbox{ as } r\to0\, ,
        \end{equation}
 at least a  solution $v$ satisfying
 \begin{equation}\label{tau+}
    r^{\tau^+}  v(r) \to  c_1 \qquad \hbox{ as } r\to 0\, ,
\end{equation}
and at least a solution $w$ satisfying
  \begin{equation}\label{tau-}
 r^{\tau^-}  w(r) \to  c_2  \qquad \hbox{ as } r\to0\, ,
        \end{equation}        
where   $K \, := \left[\left( {2\over p-1}-\tau^+\right) \left( {2\over p-1}-\tau^-\right)\right]^{1\over p-1}$ and  $c_1,\ c_2>0$ are suitable  constants;
      
   \item if  $p^\star\leq p< p^{ \star \star}$, then equation \eqref{superPk+} has at least a   solution $u$ satisfying
 \eqref{tau-};
 
 \item if $p = p^{ \star \star}$, then  equation \eqref{superPk+}  has at least a   solution $u$ satisfying 
\begin{equation}\label{tau-log}
 r^{\tau^-} ( -\log r)^{\tau^-/2}u(r) \to \bar K \qquad \hbox{ as } r\to 0\, ,
 \end{equation}
        where 
$\bar K = \left[ \tau^- \sqrt{\left(\frac{k-2}{2}\right)^2-\mu}\right]^{\tau^-/2}$;
        
        \item if  $p> p^{\star\star} $,  then  equation \eqref{superPk+}  has at least a    solution $u$ satisfying (\ref{2surp-1}).
 \end{enumerate}
\end{theo}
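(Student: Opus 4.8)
The plan is to reduce equation \eqref{superPk+} to the semilinear equation \eqref{linear} in dimension $k$, for which the existence theory and the asymptotic classification are already available (see \cite{BDL2}, and for the Laplacian \cite{Ci, CDu1}). The key elementary observation is that, for a radial $u\in{\cal C}^2(B(0,1)\setminus\{0\})$, one has $\Pkp(D^2u)=u''+(k-1)\frac{u'}{r}$ exactly on the set where $u''\geq\frac{u'}{r}$. Hence any radial $u$ solving the ordinary differential equation
$$
u''+(k-1)\frac{u'}{r}+\mu\frac{u}{r^2}=u^p\qquad\hbox{ in }(0,1)
$$
and satisfying in addition $u''\geq\frac{u'}{r}$ throughout $(0,1)$ is automatically a solution of \eqref{superPk+}. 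So it suffices, in each of the four ranges of $p$, to exhibit such a function with the asserted asymptotic profile at the origin.

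First I would quote the relevant output of \cite{BDL2} with $\Lambda=\lambda=1$ and space dimension $k\geq 3$, so that the Pucci operators reduce to the Laplacian: under $0<\mu<\left(\frac{k-2}{2}\right)^2$ the numbers $\tau^\pm$ from \eqref{taupm} are real and positive, and Theorem 1.1 of \cite{BDL2} yields radial solutions of \eqref{linear} exhibiting precisely the behaviors \eqref{2surp-1}, \eqref{tau+}, \eqref{tau-}, \eqref{tau-log} (with the same constants $K$ and $\bar K$ and suitable $c_1,c_2>0$) according to whether $p<p^\star$, $p^\star\leq p<p^{\star\star}$, $p=p^{\star\star}$ or $p>p^{\star\star}$, equivalently $\frac{2}{p-1}>\tau^+$, $\tau^-<\frac{2}{p-1}\leq\tau^+$, $\frac{2}{p-1}=\tau^-$, or $\frac{2}{p-1}<\tau^-$. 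The \emph{crucial} point that turns these into solutions of \eqref{superPk+} is that, by Proposition 4.2 of \cite{BDL2} together with the direct check that $u(r)=Kr^{-2/(p-1)}$ solves the ODE whenever $\frac{2}{p-1}\notin[\tau^-,\tau^+]$, all these solutions satisfy $u'\leq 0\leq u''$ in $(0,1)$; in particular $u''\geq 0\geq\frac{u'}{r}$, and the reduction of the first paragraph applies.

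The four cases of the statement then follow by bookkeeping. For $p<p^\star$ one takes the explicit convex decreasing solution $u(r)=Kr^{-2/(p-1)}$, with $K=\left[\left(\frac{2}{p-1}-\tau^+\right)\left(\frac{2}{p-1}-\tau^-\right)\right]^{1/(p-1)}>0$, satisfying \eqref{2surp-1}, together with the two solutions $v,w$ of \eqref{linear} furnished by \cite{BDL2} with $r^{\tau^+}v\to c_1$ and $r^{\tau^-}w\to c_2$. For $p^\star\leq p<p^{\star\star}$ one takes the solution with $r^{\tau^-}u\to c_2$; for $p=p^{\star\star}$ the resonant solution with $r^{\tau^-}(-\log r)^{\tau^-/2}u\to\bar K$, $\bar K=\left[\tau^-\sqrt{\left(\frac{k-2}{2}\right)^2-\mu}\right]^{\tau^-/2}$; and for $p>p^{\star\star}$ again the explicit solution $u(r)=Kr^{-2/(p-1)}$ satisfying \eqref{2surp-1}. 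In each case the function is convex and non-increasing, hence a solution of \eqref{superPk+}.

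I do not expect a serious obstacle: the substantive analysis (the shooting/sub-supersolution constructions, the asymptotic classification, and the convexity and monotonicity of the constructed solutions) was carried out in \cite{BDL2}, and the present argument only transfers it through the identity $\Pkp(D^2u)=u''+(k-1)\frac{u'}{r}$ valid on the convex regime. The single point requiring attention is that the sign conditions $u'\leq 0\leq u''$ must be known to hold on all of $(0,1)$, not merely near the origin, since this is exactly what guarantees that the semilinear solutions remain solutions of the truncated-Laplacian equation up to $\partial B(0,1)$; this global information is part of what \cite{BDL2} provides for the solutions it constructs, so no new estimate is needed here.
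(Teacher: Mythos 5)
Your proposal is correct and follows essentially the same route as the paper: reduce to the semilinear equation $\Delta u+\mu u/r^2=u^p$ in dimension $k$, invoke Theorem 1.1 and Proposition 4.2 of \cite{BDL2} (applied with $\Lambda=\lambda=1$) together with the explicit scaling solution $Kr^{-2/(p-1)}$ to obtain radial solutions that are convex and non-increasing, and observe that $u''\geq 0\geq u'$ implies $u''\geq u'/r$, hence $\Pkp(D^2u)=u''+(k-1)u'/r$ and the semilinear solutions already solve \eqref{superPk+}. Your closing remark about needing the sign conditions globally on $(0,1)$, not only near the origin, is exactly the point the paper also relies on as part of what \cite{BDL2} provides.
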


Conversely, let us show that the asymptotic behaviors identified  in Theorem \ref{exisuperPk+} are the only possible ones for \emph{any} solution $u$ of equation \eqref{superPk+}.  We are going to apply some of the arguments used  in the proof of Theorem 1.2 in \cite{BDL2}, which will be recalled for the reader's convenience in the  following preliminary result.

\begin{lemme}\label{Pk+linear}
Let $u\in {\cal C}^2(B(0,1)\setminus \{0\})$ be a positive radial solution of \eqref{superPk+}. Then
\begin{itemize}
\item[{\rm (i)}] $\limsup_{r \to 0} u(r)r^{\frac{2}{p-1}}<+\infty$;
\item[{\rm (ii)}] if  $\limsup_{r \to 0} u(r)r^{\frac{2}{p-1}}>0$, then  either $p<p^*$ or $p>p^{**}$ and $u$ satisfies \eqref{2surp-1}; 
\item[{\rm (iii)}] if  $\lim_{r \to 0} u(r)r^{\frac{2}{p-1}}=0$, then $u$ is a radial solution of the semilinear equation \eqref{linear} in $B(0,r_0)\setminus \{0\}\subset \R^k$ for some $r_0>0$ sufficiently small.
\end{itemize}
\end{lemme}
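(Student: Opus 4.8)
The plan is to pass to the rescaled function $v(r):=r^{2/(p-1)}u(r)$ and study its behaviour as $r\to0$. Set $\alpha:=\tfrac{2}{p-1}$, let $\tau^\pm$ be as in \eqref{taupm}, and put $\kappa:=(\alpha-\tau^+)(\alpha-\tau^-)=\alpha^2-(k-2)\alpha+\mu$, so that \eqref{2surp-1} is exactly the statement $v(r)\to K=\kappa^{1/(p-1)}$, and $\kappa>0$ is equivalent to $\alpha\notin[\tau^-,\tau^+]$, i.e. to $p<p^*$ or $p>p^{**}$. It is convenient to set $t=-\ln r$ and to view $v$ as a function of $t\to+\infty$; denoting $t$-derivatives by a dot, the radial equation \eqref{superPk+} becomes autonomous: it reads $\ddot v+(2\alpha-(k-2))\dot v+\kappa v=v^p$ on the set $R_+=\{u''\ge u'/r\}$, where $\Pkp(D^2u)=u''+(k-1)u'/r$ is the radial $k$-dimensional Laplacian, and $k\dot v=(\mu-k\alpha)v-v^p$ on $R_-=\{u''\le u'/r\}$, where $\Pkp(D^2u)=ku'/r$; moreover the sign of $u''-u'/r$ coincides (up to the positive factor $r^{-\alpha-2}$) with that of $\ddot v+2(\alpha+1)\dot v+\alpha(\alpha+2)v$.

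For (i) — a Keller--Osserman type a priori bound — I would combine two facts. First, at an interior local maximum $\bar r$ of $r\mapsto v(r)$ one has $v'(\bar r)=0$, $v''(\bar r)\le0$; inserting this into \eqref{superPk+} (in whichever of $R_\pm$ contains $\bar r$) forces $v(\bar r)^{p-1}\le\max\{\kappa,\mu-k\alpha\}$, so $v(\bar r)\le M$ for an explicit $M=M(k,\mu,p)$, and no such maximum exists when $\kappa\le0$ and $\mu\le k\alpha$. Second, if $v\ge(2\mu)^{1/(p-1)}$ on some interval $(0,t_1)$, then $u^p\ge 2\mu\,u/r^2$, hence $\Pkp(D^2u)\ge\tfrac12u^p>0$; arguing as in Remark \ref{remsign}~(ii), $u'$ has constant sign near $0$, and it must be $\le0$ (otherwise $u$ would be bounded near $0$, contradicting $u\ge(2\mu)^{1/(p-1)}r^{-\alpha}$), so $u$ lies in $R_+$ and $u''=\Pkp(D^2u)-(k-1)u'/r\ge\tfrac12u^p$; the one-dimensional Keller--Osserman estimate on the intervals $(a/2,3a/2)$ then gives $v(a)=a^\alpha u(a)\le C$ for a universal $C$ and all small $a$. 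Together these force $v\le\max\{M,C,(2\mu)^{1/(p-1)},v(r_0)\}$ on a punctured neighbourhood of the origin.

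For (iii), suppose $v(r)\to0$. Then $u^p=o(\mu\,u/r^2)$, so $\Pkp(D^2u)=u^p-\mu u/r^2<0$ near $0$ and Lemma \ref{lem1}~(i) gives $u'\le0$. On the open set $E=\{r:\,u''(r)<u'(r)/r\}$ the equation reads $ku'/r+\mu u/r^2=u^p$; differentiating this identity and using it again to eliminate $u'$ (a routine computation) gives
$$u''-\frac{u'}{r}=\frac{1}{r^{\alpha+2}}\left[\frac{p}{k^2}\,v^{2p-1}-\frac{(p+1)\mu}{k^2}\,v^p+\Big(\frac{\mu^2}{k^2}+\frac{2\mu}{k}\Big)v\right],$$
and since the bracket behaves like $\big(\tfrac{\mu^2}{k^2}+\tfrac{2\mu}{k}\big)v>0$ as $v\to0$, the right-hand side is positive for $r$ small, contradicting $r\in E$. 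Hence $E$ does not accumulate at $0$: $u''\ge u'/r$ on some $B(0,r_0)\setminus\{0\}$, there $\Pkp(D^2u)=u''+(k-1)u'/r$, and \eqref{superPk+} is precisely the radial form of \eqref{linear} in dimension $k$.

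For (ii), assume $m:=\limsup_{r\to0}v(r)>0$, so $0<m<\infty$ by (i). The crucial observation is that, for $t$ large, $v$ has no critical point in the interior of $R_-$: where $\dot v=0$ the first order equation forces $\ddot v=0$, whence $\ddot v+2(\alpha+1)\dot v+\alpha(\alpha+2)v=\alpha(\alpha+2)v>0$, placing the point in $R_+$. Thus every critical point of $v$ lies in $R_+$, where $\dot v=0$ gives $\ddot v=v^p-\kappa v$; so at a local maximum $v^{p-1}\le\kappa$ (hence $\kappa>0$ and $v\le K$) and at a local minimum $v^{p-1}\ge\kappa$ (hence $v\ge K$). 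These bounds preclude oscillation, so $v$ is eventually monotone and converges to $\ell=m>0$; it cannot eventually stay in $R_-$ (there $\ddot v+2(\alpha+1)\dot v+\alpha(\alpha+2)v\to\alpha(\alpha+2)\ell>0$), and a standard asymptotic analysis of the autonomous second order equation yields $\kappa\ell=\ell^p$, so $\ell=K$ and $\kappa>0$. Therefore $v\to K$ with $\kappa>0$, i.e. \eqref{2surp-1} holds and $p<p^*$ or $p>p^{**}$.

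The main obstacle is (ii): one must exclude oscillatory behaviour of $v$ at the origin and control the switching between the regimes $R_+$ and $R_-$ of the operator. This is made tractable precisely by the explicit structure of $\Pkp$ on radial functions, which reduces the problem to two \emph{autonomous} ODEs and, crucially, traps all critical points of $v$ in $R_+$, where the equation $\ddot v+(2\alpha-(k-2))\dot v+\kappa v=v^p$ governs the asymptotics; by contrast, parts (i) and (iii) are comparatively routine one-dimensional arguments.
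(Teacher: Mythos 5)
Parts (i) and (iii) of your proposal are in the same spirit as the paper's proof. For (i), both arguments come down to a one–dimensional Keller--Osserman bound once one has $u''\ge \tfrac12 u^p$; the paper carries out the integration $u''u'\le \tfrac12 u^pu'$ explicitly rather than quoting Keller--Osserman, and establishes the dichotomy (either $v$ is bounded or $v\to\infty$) directly by comparison with $Cr^{-2/(p-1)}$ on annuli, which is cleaner than your local-max/Keller--Osserman combination but reaches the same conclusion. For (iii), your computation is exactly the content of the paper's appeal to Lemma~\ref{lem1.1}: differentiate the first-order identity on $\{u''<u'/r\}$ and obtain $u''-u'/r>0$ there for small $r$, a contradiction; you re-derive the lemma rather than cite it, which is fine.

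Part (ii) is where your route genuinely diverges, and where there is a real gap. The paper proves (ii) by comparing $u$ with the explicit super- and sub-solutions constructed in \cite{BDL2}, via Theorem~\ref{comparison}. You instead attempt a self-contained phase-plane analysis in $t=-\ln r$. Your placement of all critical points of $v$ in $R_+$ and the resulting exclusion of oscillation are correct, and they do give that $v$ is eventually monotone with $v\to\ell>0$. But you then discard only the scenario "$v$ eventually in $R_-$" and conclude by ``a standard asymptotic analysis of the autonomous second order equation,'' which tacitly assumes that $v$ is eventually in $R_+$; the possibility that the trajectory crosses between $R_+$ and $R_-$ infinitely often as $t\to\infty$ is not ruled out, and the asymptotic analysis also needs $\dot v\to 0$, which requires uniform control of $\ddot v$ across regime changes. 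This omission is not cosmetic: along $R_-$ the equation $k\dot v=(\mu-k\alpha)v-v^p$ together with $\dot v\to0$ would force $\ell^{p-1}=\mu-k\alpha$, and $\mu-k\alpha=\kappa-\alpha(\alpha+2)\neq\kappa$, so in the infinite-switching regime your intended conclusion $\ell=K=\kappa^{1/(p-1)}$ would actually be false. You must therefore prove that the trajectory eventually stays in $R_+$ (or otherwise exclude persistent switching); the paper's barrier-comparison route avoids this issue entirely.
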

 \begin{proof} (i) A direct computation shows that, for $C>0$ sufficiently large (actually, for any $C>0$ if $p^*\leq p\leq p^{**}$), the function 
 $C r^{-2\over p-1}$ is a super-solution of  equation \eqref{superPk+} .  Hence, 
if $u(r)r^{\frac{2}{p-1}}$ is bounded along a decreasing sequence $\{r_n\}$ converging to zero, then, by applying  the standard comparison principle in every  annulus  $B_{r_{n}}\setminus\overline{B}_{r_{n+1}}$, we deduce 
$$u(r)\leq C r^{-2\over p-1} \quad \hbox{ for } 0<r\leq r_1\, .
$$ 
Thus, arguing by contradiction, if  statement (i) is false,  then we have
\begin{equation}\label{unbounded}
\lim_{r\to 0} u(r) r^{\frac{2}{p-1}}= +\infty\, .
\end{equation}   
This implies that, for $r$ small enough, one has
$$
\mathcal{P}^+_k(D^2u)= u^p -\mu \frac{u}{r^2} \geq { u^p \over 2}>0\, .$$ 
By Remark \ref{remsign}, it then follows that $u'$ has constant sign in a right neighborhood of zero and, therefore, by \eqref{unbounded}, $u'(r)\leq 0$ for $r$ small enough. We then have 
$$ u^{ \prime \prime} \geq { u^p\over 2}\, ,$$
 and also
          $$u^{ \prime \prime} u^\prime \leq {u^p u^\prime \over 2}\, .$$
           Integrating, one gets 
           $$ (u^\prime )^2(r) - c  u^{p+1} (r) \geq  (u^\prime )^2(r_0) - c  u^{p+1} (r_0)$$
            for $r < r_0$ and for some $c>0$. Using $u^{p+1}(r)\rightarrow +\infty$ as $r\to 0$, for $r$ small enough we get
            $$(u^\prime)^2 \geq \frac{c}{2} u^{p+1}$$
            or, equivalently,
            $$ {-u^\prime \over u^{ p+1\over 2}} \geq c_1>0\, .$$
This yields
             $$ u(r)^{ 1-p \over 2} \geq c_2 r\, ,$$
 for some $c_2>0$: a contradiction to \eqref{unbounded}. 
\medskip

(ii) By applying the comparison principle Theorem \ref{comparison} and by using exactly the same supersolutions constructed in Proposition 4.3 and Proposition 4.4 of \cite{BDL2}, we can prove that if $p^*\leq p\leq p^{**}$ then $\lim_{r\to 0} u(r)r^{\frac{2}{p-1}}=0$. Hence, under the assumption of statement (ii), we deduce that either $p<p^*$ or $p>p^{**}$. Moreover, by using again Theorem \ref{comparison} and the barrier functions exhibited in the proof of statement 2. of Proposition 4.2 of \cite{BDL2}, we obtain the asymptotic formula \eqref{2surp-1}.
\medskip

(iii) We need to prove that $u''\geq \frac{u'}{r}$ for $r$ sufficiently small, under the assumption $\lim_{r \to 0} u(r)r^{\frac{2}{p-1}}=0$.
We observe that, in this case, by equation \eqref{superPk+},   for $r$ sufficiently small one has
 $$
 {\cal P}^+_k (D^2u)= \frac{u}{r^2} ( r^2 u^{p-1}-\mu ) <0\, .
 $$
Hence, by Lemma \ref{lem1}, we have that $u'\leq 0$ for $r$ small enough. This in turn implies that the negative function $g(r)=u(r)^p-\mu \frac{u}{r^2}$ satisfies
$$
g'(r)=  u'\left( p u^{p-1}-\frac{\mu}{r^2}\right) +2 \mu \frac{u}{r^3} >0\, ,
$$
and the conclusion follows by Lemma \ref{lem1.1}. 
\end{proof}

By using the above lemma and by applying Theorem 1.2 in \cite{BDL2}, we immediately deduce the following classification result.

\begin{theo}\label{asymPk+}
Let  $u\in {\cal C}^2(B(0,1)\setminus \{0\})$ be a positive radial solution of \eqref{superPk+} with $p>1$ and $0<\mu <\left(\frac{k-2}{2}\right)^2$. Then, using the same notations as in Theorem \ref{exisuperPk+}, one has
\begin{enumerate}

\item if $1<p < p^{ \star }$, then  $u$ satisfies either \eqref{2surp-1} or  \eqref{tau+} or 
 \eqref{tau-};

\item if  $p^\star \leq p < p^{ \star \star}$,  then    $u$ satisfies    \eqref{tau-};  

\item if $p = p^{ \star\star}$, then $u$ satisfies \eqref{tau-log};

\item  if $p>p^{\star \star}$, then $u$ satisfies \eqref{2surp-1}.
 \end{enumerate}            
             \end{theo}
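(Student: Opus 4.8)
The plan is to reduce the classification to the one already known for the semilinear equation \eqref{linear} in dimension $k$, using Lemma \ref{Pk+linear} as the bridge.

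First I would set $L:=\limsup_{r\to 0}u(r)\,r^{\frac{2}{p-1}}$, which by Lemma \ref{Pk+linear}(i) is a finite number in $[0,+\infty)$, and distinguish two cases. If $L>0$, then Lemma \ref{Pk+linear}(ii) tells us at once that either $p<p^\star$ or $p>p^{\star\star}$, and that $u$ obeys the asymptotic law \eqref{2surp-1}; this accounts for the occurrence of \eqref{2surp-1} in the range $1<p<p^\star$ and for the whole range $p>p^{\star\star}$. In particular it also shows that whenever $p^\star\le p\le p^{\star\star}$ one must have $L=0$.

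If instead $L=0$, then Lemma \ref{Pk+linear}(iii) furnishes $r_0>0$ such that $u$, regarded as a function of the radial variable, is a positive radial solution of \eqref{linear} in the punctured ball $B(0,r_0)\setminus\{0\}\subset\R^k$. I would then invoke Theorem 1.2 of \cite{BDL2}: its hypotheses hold with $\Lambda=\lambda=1$ and space dimension $k\ge 3$, and, being a statement about the behavior of $u$ as $r\to 0$, it applies even though the equation is only known on $B(0,r_0)\setminus\{0\}$ rather than on the whole punctured unit ball. That theorem lists the admissible behaviors according to the position of $\frac{2}{p-1}$ relative to $\tau^\pm$: if $\frac{2}{p-1}>\tau^+$, that is $p<p^\star$, a solution satisfies \eqref{2surp-1}, \eqref{tau+} or \eqref{tau-}; if $\tau^-<\frac{2}{p-1}\le\tau^+$, that is $p^\star\le p<p^{\star\star}$, it satisfies \eqref{tau-}; if $\frac{2}{p-1}=\tau^-$, that is $p=p^{\star\star}$, it satisfies \eqref{tau-log}; and if $\frac{2}{p-1}<\tau^-$, that is $p>p^{\star\star}$, it satisfies \eqref{2surp-1}. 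Since here $L=0$, the alternative \eqref{2surp-1}, for which $r^{\frac{2}{p-1}}u(r)\to K>0$, is impossible; hence the last range does not arise in this case (consistently with $p>p^{\star\star}$ having been handled above), whereas in the first three ranges the surviving possibilities are exactly \eqref{tau+}/\eqref{tau-}, \eqref{tau-}, and \eqref{tau-log}, respectively.

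Collecting the two cases yields precisely the statement: for $1<p<p^\star$ a solution satisfies \eqref{2surp-1}, \eqref{tau+} or \eqref{tau-}; for $p^\star\le p<p^{\star\star}$ it satisfies \eqref{tau-}; for $p=p^{\star\star}$ it satisfies \eqref{tau-log}; for $p>p^{\star\star}$ it satisfies \eqref{2surp-1}. No serious difficulty remains; the main (and rather minor) point requiring care is the bookkeeping one, namely checking that the exponents $\tau^\pm$, $p^\star$, $p^{\star\star}$ and the limiting constants appearing in \cite{BDL2} for the Laplacian in dimension $k$ coincide with those in \eqref{taupm} and Theorem \ref{exisuperPk+}, and confirming that Theorem 1.2 of \cite{BDL2}, although phrased for solutions on an entire punctured ball, transfers verbatim to a solution known only near the singularity.
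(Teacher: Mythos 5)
Your proposal is correct and takes essentially the same route as the paper: the paper's proof consists precisely of combining Lemma \ref{Pk+linear} (to reduce to the semilinear equation in dimension $k$ or directly obtain \eqref{2surp-1}) with the classification in Theorem 1.2 of \cite{BDL2}, which is exactly the two-case argument you spell out.
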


%%%%%%%%%%%%%%%%%%%%%%%%%%%%%%%%%%%%%%%%%%%%%%%%%%%%%%%%%%%%%%%%%%%%%%%%%%%%%%%%%%%%%%%%%%%%%%%%%%%%%%%%%%%%%%%%%%%%%%%%%%%%%%%%%%%%%%%%%%%%%%%%%%%%%%%%%%%%%%%%%%%%%%%%%%%%%%%%%%%%%%%%%%%%%%%%%%%%%%%%%%%%%

 \subsection{The superlinear problem for ${\cal P}_k^-$, with $2\leq k\leq N-1$}
In this section we focus on the equation
\begin{equation}\label{Pkmsuper}
 { \cal P}^-_k ( D^2 u) + \mu \frac{u}{r^{2}} = u^p \quad \hbox{ in } B(0,1)\setminus \{0\}\, ,
  \end{equation}
with $N-1\geq k\geq 2$, $\mu>0$ and $p>1$. Our goal is to prove the existence of radial positive solutions $u\in C^2(B(0,1)\setminus\{0\})$ and to classify their asymptotic behavior around zero in dependence of the mutual values of the parameters $k\, ,\ \mu$ and $p$.

Let us start with a preliminary result which collects some properties of solutions.

\begin{lemme}\label{prelprop}
Let $u\in C^2(B(0,1)\setminus\{0\})$  be a solution of \eqref{Pkmsuper} with $N-1\geq k\geq 2$, $\mu>0$ and $p>1$. Then:
\begin{itemize}
\item[{\rm (i)}] $\limsup_{r\to 0} u(r)=+\infty$;

\item[{\rm (ii)}] $\limsup_{r\to 0} u(r)r^{\frac{2}{p-1}}<+\infty$;

\item[{\rm (iii)}] if $\frac{\mu}{k}<\frac{2}{p-1}$, then the function $u(r)r^{\frac{\mu}{k}}$ is monotone non decreasing for $0<r<1$;

\item[{\rm (iv)}] if $\frac{\mu}{k}=\frac{2}{p-1}$, then the function $u(r)r^{\frac{2}{p-1}}(-\ln r)^{\frac{1}{p-1}}$ is bounded for $0<r<1$;

\item[{\rm (v)}] if $\limsup_{r\to 0} u(r)r^{\frac{2}{p-1}}>0$, then $\frac{\mu}{k} > \frac{2}{p-1}$ and $\liminf_{r\to 0} u(r)r^{\frac{2}{p-1}}>0$;

\item[{\rm (vi)}] if $\lim_{r\to 0} u(r)r^{\frac{2}{p-1}}=0$, then $u$ satisfies $u'\leq 0$ and $u''-\frac{u'}{r}\geq 0$ for $r>0$ sufficiently small.
\end{itemize}
\end{lemme}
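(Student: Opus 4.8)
The plan is to exploit the key structural feature of $\Pkm$ on radial functions: since $k<N$, whenever $u''\geq \frac{u'}{r}$ the operator reduces to the \emph{first order} operator $k\frac{u'}{r}$, and whenever $u''\leq \frac{u'}{r}$ it reduces to $u''+(k-1)\frac{u'}{r}$, i.e.\ the Laplacian in dimension $k$. So each solution satisfies, pointwise, one of the two scalar identities
$$
k\,\frac{u'}{r}+\mu\frac{u}{r^2}=u^p \qquad\text{or}\qquad u''+(k-1)\frac{u'}{r}+\mu\frac{u}{r^2}=u^p .
$$
I would first establish (i): if $u$ stayed bounded near $0$, then the right-hand side $u^p$ is bounded, and comparing with the explicit supersolutions from Theorem \ref{Pk-}/Proposition \ref{maxpPk-} (or directly integrating) would force $u$ to be bounded in $\CT$ up to $0$, hence $u$ extends to a $\CT$ solution in the full ball; but then $\Pkm(D^2u)(0)+\mu\cdot 0 = u(0)^p>0$ while $\Pkm(D^2u)$ is finite at $0$ — actually the cleaner route is: a bounded radial solution would be a subsolution of $\Pkm(D^2u)+\mu u/r^2\ge 0$ that is bounded, contradicting the nonexistence of bounded positive supersolutions implicit in $\ulambda=+\infty$ together with the superlinear term forcing growth. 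I would phrase this via the argument already used in Lemma \ref{lem1} combined with the equation. Statement (ii) follows exactly as in Lemma \ref{Pk+linear}(i): for $C$ large, $Cr^{-2/(p-1)}$ is a supersolution of \eqref{Pkmsuper} (this uses only the bound $\Pkm(D^2(Cr^{-2/(p-1)}))\le$ the $k$-dimensional Laplacian value, which is the relevant one since that profile is convex decreasing), and the standard comparison principle on annuli gives $u\le Cr^{-2/(p-1)}$ once $u(r)r^{2/(p-1)}$ is bounded along a sequence $r_n\to0$; if no such sequence exists one derives $\lim u(r)r^{2/(p-1)}=+\infty$, then $\Pkm(D^2u)=u^p-\mu u/r^2\ge u^p/2>0$, so by Remark \ref{remsign} $u'$ has constant sign near $0$ and $u'\le 0$, forcing $u''\le u'/r$ impossible simultaneously with... — more simply, once $\Pkm(D^2u)>0$ and $u'\le0$, if $u''\ge u'/r$ then $k u'/r = u^p-\mu u/r^2>0$ contradicts $u'\le0$, so $u''<u'/r$ and $u''+(k-1)u'/r=u^p-\mu u/r^2\ge u^p/2$; then the ODE energy argument of Lemma \ref{Pk+linear}(i) (multiply by $u'$, integrate, get $(-u')u^{-(p+1)/2}\ge c_1$, hence $u^{(1-p)/2}\ge c_2 r$) contradicts the blow-up rate.

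For (iii) and (iv) I would argue directly with the first-order alternative. Suppose $\frac\mu k<\frac2{p-1}$ and, for contradiction, that $\varphi(r):=u(r)r^{\mu/k}$ is decreasing on some interval. On any maximal such interval we cannot be in the Laplacian regime throughout (that would need a separate sign analysis), so consider where $u''\ge u'/r$: there $k u'/r+\mu u/r^2=u^p$, i.e. $(r^{\mu/k}u)'=\varphi' =\frac{r^{\mu/k}}{k}\big(r u^p - \text{(sign-definite)}\big)>0$ for small $r$ since $r u^p = r\cdot(u r^{2/(p-1)})^{p}\,r^{-2p/(p-1)}$ — using (ii), $u\le Cr^{-2/(p-1)}$ so $ru^p\le C^p r^{1-2p/(p-1)}=C^p r^{-(p+1)/(p-1)}$, which is positive, so $\varphi'>0$ there, a contradiction; in the Laplacian regime $u''\le u'/r$ one uses the substitution $w=u r^{\mu/k}$ transforming the $k$-dimensional equation and checks the sign of $w'$ at a would-be interior maximum of $\varphi$, getting a contradiction from $u^p>0$. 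The borderline case (iv) is the same computation with $r^{\mu/k}=r^{2/(p-1)}$ and the extra logarithmic factor, yielding a differential inequality for $u r^{2/(p-1)}(-\ln r)^{1/(p-1)}$ that prevents it from being unbounded; here the exact exponent $\frac1{p-1}$ on the log is pinned down by balancing the $u^p$ term against the homogeneous part, exactly as the explicit solution in the introduction's classification shows.

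Finally, (v) is the contrapositive packaging of (iii)--(iv): if $\limsup_{r\to0}u(r)r^{2/(p-1)}>0$ then by (iii) the case $\frac\mu k<\frac2{p-1}$ is excluded because $u(r)r^{\mu/k}=u(r)r^{2/(p-1)}\cdot r^{\mu/k-2/(p-1)}$ would be non-decreasing while $r^{\mu/k-2/(p-1)}\to\infty$, forcing $\limsup u r^{2/(p-1)}$ along the approach to be $0$; the case $\frac\mu k=\frac2{p-1}$ is excluded by (iv) since boundedness of $u r^{2/(p-1)}(-\ln r)^{1/(p-1)}$ forces $u r^{2/(p-1)}\to 0$; hence $\frac\mu k>\frac2{p-1}$, and then monotonicity-type control (now $r^{\mu/k}u$ with the reversed inequality, or a Harnack-type use of the comparison Theorem \ref{comparison} with the lower barrier $c_1 r^{-2/(p-1)}$) upgrades $\limsup>0$ to $\liminf>0$. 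Statement (vi): assuming $\lim_{r\to0}u(r)r^{2/(p-1)}=0$, the right-hand side satisfies $u^p-\mu u/r^2 = \frac{u}{r^2}(r^2u^{p-1}-\mu)<0$ for small $r$, so $\Pkm(D^2u)<0$ near $0$ and hence by Lemma \ref{lem1}(ii) (with $u$ bounded below by positivity) $u'\le 0$; then $g(r):=u^p-\mu u/r^2$ is negative and its derivative $g'=u'(pu^{p-1}-\mu/r^2)+2\mu u/r^3$; for small $r$, $pu^{p-1}-\mu/r^2<0$ (again by $u r^{2/(p-1)}\to0$), so $u'(pu^{p-1}-\mu/r^2)\ge0$ and $g'>0$, whence Lemma \ref{lem1.1}(ii) applies and gives $u''-u'/r$ of constant sign near $0$ — and the sign must be $\ge0$, since $u''-u'/r\le 0$ would put us in the $k$-dimensional Laplacian regime where $u$ solves \eqref{linear}, and there (as in Lemma \ref{Pk+linear}(iii) / Proposition 4.2 of \cite{BDL2}) one in fact has $u''\ge0\ge u'$, contradicting $u''\le u'/r\le0<u''$ unless $u''-u'/r\ge0$.

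The main obstacle is item (iii) (and correspondingly the exclusion in (v)): the operator jumps between the first-order and the second-order regime, and one must rule out that a solution oscillates between the two regimes in a way that lets $u(r)r^{\mu/k}$ decrease; handling the Laplacian regime requires the change of variables $w=ur^{\mu/k}$ and a careful sign analysis at interior extrema of the monotonicity quantity, very much in the spirit of Lemma \ref{lem1.1}, while the first-order regime is essentially an explicit Bernoulli ODE. Pinning the precise logarithmic exponent in (iv) is the same delicate balancing and is where an off-by-a-constant error would be easy to make.
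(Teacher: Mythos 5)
Your overall plan is in the right spirit, and (v)--(vi) do line up with the paper's argument, but there are two concrete problems and one missed simplification.

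First, your argument for (ii) misapplies Remark~\ref{remsign}. Once you have $\Pkm(D^2u)\geq u^p/2>0$ near zero, Remark~\ref{remsign}{\rm (i)} (the case $\Pkm(D^2u)\geq 0$, $k\leq N-1$) gives directly $u'\geq 0$; you instead assert $u'\leq 0$ and go off into an energy argument. But $u'\geq 0$ already finishes the job: a nondecreasing function is bounded on $(0,\delta)$, contradicting~(i). The case split on $u''\lessgtr u'/r$ and the Osserman-type integration are unnecessary here (they \emph{are} used by the paper in Lemma~\ref{Pk+linear}(i), but for $\Pkp$, where the sign information goes the other way). Similarly, your sketch of (i) hedges between several incompatible routes and never settles on one; the paper's proof is quite specific (use Lemma~\ref{lem1} to get $u'\leq 0$, then Lemma~\ref{lem1.1} to get constant sign of $u''-u'/r$, then a dichotomy: the Laplacian regime contradicts a nonexistence result from \cite{BDL2}, the first-order regime integrates directly to a contradiction), and none of your candidate routes is that one.

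Second, for (iii)--(iv) your route (a direct monotonicity/ODE argument) is genuinely different from the paper's, which instead builds explicit radial supersolutions of the form $\epsilon r^{-2/(p-1)} + C r^{-\mu/k}$ (or with the logarithmic factor in case (iv)) and applies the comparison principle of Theorem~\ref{comparison}. Your approach \emph{can} be made to work, but your write-up has a spurious case split and leaves the ``Laplacian regime'' vague. The observation that removes the case split entirely is that for any radial $C^2$ function and any $k\leq N-1$ one has the pointwise identity
\[
\Pkm(D^2u)=\min\Big\{\,u''+(k-1)\frac{u'}{r},\ k\,\frac{u'}{r}\,\Big\}\ \leq\ k\,\frac{u'}{r}\,.
\]
Hence the equation gives the single differential inequality $k\frac{u'}{r}+\mu\frac{u}{r^2}\geq u^p>0$ valid for \emph{all} $r\in(0,1)$, and therefore $\bigl(u(r)r^{\mu/k}\bigr)'=\frac{r^{\mu/k+1}}{k}\Bigl(k\frac{u'}{r}+\mu\frac{u}{r^2}\Bigr)>0$, which proves (iii) in one line (and, incidentally, shows the hypothesis $\mu/k<2/(p-1)$ is not needed for that particular monotonicity). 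For (iv), setting $h(r)=u(r)r^{2/(p-1)}$ and using the same inequality with $\mu/k=2/(p-1)$ gives $h'\geq h^p/(kr)$, which integrates to the desired logarithmic bound. If you want to retain your framework you should state and use this pointwise bound on $\Pkm$ explicitly; without it, the ``Laplacian regime'' step in your sketch is a genuine gap. In (v), your passage from $\limsup>0$ to $\liminf>0$ should use the \emph{standard} comparison principle on annuli $B_{r_n}\setminus \overline{B}_{r_{n+1}}$ against the explicit subsolution $c\,r^{-2/(p-1)}$ (with $c$ small), not Theorem~\ref{comparison}, whose hypotheses you cannot yet verify for $u$.
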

\begin{proof}
(i)  By contradiction, let us assume that $u$ is bounded. Then,   for $r$ small enough, one has
 $$ { \cal P}_k^-( D^2 u)= u\left ( u^{p-1}-\frac{\mu}{r^2}\right)< 0\, ,$$
which yields, by Lemma \ref{lem1},  $u^\prime \leq 0$ in a neighborhood of zero. This in turn implies that the function $g(r)= u^p(r)-\mu \frac{u(r)}{r^2}$ satisfies
 $$g^\prime(r)=u^\prime \left( p u^{ p-1}-\frac{\mu}{r^2}\right) + 2 \mu \frac{u}{r^3}>0\, ,$$
so that, by Lemma \ref{lem1.1}, $u$ is such that $u''-\frac{u'}{r}$ has constant sign in a neighborhood of zero. If $u''-\frac{u'}{r}\leq0$, we get that, in a neighborhood of zero, $u$ is a bounded solution of
$$
u''+(k-1) \frac{u'}{r}= u^p-\mu \frac{u}{r^2}\, ,$$
in contradiction with Proposition 4.1 of \cite{BDL2}. On the other hand, if $u''-\frac{u'}{r}\geq0$, then, for $r>0$ small enough, $u$ satisfies
$$
k \frac{u'}{r}=u\left ( u^{p-1}-\frac{\mu}{r^2}\right)\leq - \frac{c}{r^2}\, ,
$$
for some $c>0$, again contradicting  the boundedness of $u$. 
\medskip

(ii) We observe that, whatever $\mu$ is,  the function 
 $C r^{-\frac{2}{p-1}}$ is a super-solution of  equation \eqref{Pkmsuper}  for $C>0$ sufficiently large. Hence, arguing by contradiction as in the proof of Lemma \ref{Pk+linear} (i), we get that, if statement (ii) fails, then
$$
\lim_{r\to 0} u(r)r^{\frac{2}{p-1}}=+\infty\, .
$$
This implies that, for $r$ small enough, $u$ satisfies
$$
\mathcal{P}_k^-(D^2u)= \frac{u}{r^2} \left( u^{p-1}r^2 -\mu\right)>0\, .$$ 
Then, by Remark \ref{remsign},  we obtain $u^\prime \geq 0$ and then  $u$  bounded, in contradiction with statement (i). 
\medskip

(iii) Let $0<r_0<1$ be fixed. For any $\epsilon>0$, the function
$$
v(r)=\frac{\epsilon}{r^{\frac{2}{p-1}}}+ \frac{u(r_0)r_0^{\frac{\mu}{k}}}{r^{\frac{\mu}{k}}}
$$
is convex and decreasing, and, as a radial function, it satisfies
$$
{\cal P}_k^-(D^2v)+\mu \frac{v}{r^2}= k \frac{v'}{r}+\mu \frac{v}{r^2}=\epsilon \frac{\mu-\frac{2k}{p-1}}{r^{\frac{2p}{p-1}}}<0<v^p\, .
$$
Moreover, by statement (ii), $u$ satisfies $u\leq c r^{-\frac{2}{p-1}}$. Hence,  $u$ and $v$ satisfy the assumptions of   Theorem \ref{comparison} and, since $u(r_0)\leq v(r_0)$, we get $u(r)\leq v(r)$ for all $0<r\leq r_0$. Letting $\epsilon \to 0$ yields the conclusion.
 
\medskip

(iv) Let us consider in this case, for any $\epsilon\, ,\ C>0$, the function
$$
v(r)=\frac{\epsilon}{r^{\frac{2}{p-1}}}+ \frac{C}{r^{\frac{2}{p-1}}(-\ln r)^{\frac{1}{p-1}}}\, .
$$
For $r_0$ sufficiently small and for $0<r<r_0$, $v$ satisfies $v'\leq 0\leq v''$ and
$$
{\cal P}_k^-(D^2v)+\mu \frac{v}{r^2} = k \frac{v'}{r}+\mu \frac{v}{r^2}=\frac{\mu \, C}{2 r^{\frac{2p}{p-1}}(-\ln r)^{\frac{p}{p-1}}}\leq v^p
$$
if $C$ is large enough, independently of $\epsilon>0$. If, moreover, $C$  is chosen in such a way that $v(r_0)\geq u(r_0)$, we can apply Theorem \ref{comparison} as before in order to conclude $u(r)\leq v(r)$ for $0<r\leq r_0$. Letting $\epsilon \to 0$, we get the conclusion.

\medskip

(v) By statements (iii) and (iv), we deduce that if $\frac{\mu}{k}\leq \frac{2}{p-1}$, then $\lim_{r \to 0}u(r) r^{\frac{2}{p-1}}=0$ . Hence, if $\limsup_{r \to 0}u(r) r^{\frac{2}{p-1}}>0$, then, necessarily, $\frac{\mu}{k}> \frac{2}{p-1}$ and there exist a decreasing sequence $\{ r_n\}$ converging to zero and some positive constant $l$ such that
$$
u(r_n) r_n^{\frac{2}{p-1}}\geq l\quad \hbox{ for all } n\geq 1\, .
$$
Let us consider the function $w(r)=\frac{c}{r^{\frac{2}{p-1}}}$, with $c=\min \left\{ l, \left(\mu -\frac{2k}{p-1}\right)^{\frac{1}{p-1}}\right\}$. Then, $w$ is a convex and decreasing function of $r>0$ satisfying, as a radial function,
$$
{\cal P}_k^-(D^2w)+\mu \frac{w}{r^2} = k \frac{w'}{r}+\mu \frac{w}{r^2}= \frac{c}{r^{\frac{2p}{p-1}}}\left( \mu-\frac{2k}{p-1}\right)\geq w^p\, .
$$
Since $w(r_n)\leq u(r_n)$ for all $n\geq 1$, by applying the standard comparison principle in each annular domain $B(0,r_n)\setminus B(0, r_{n+1})$, we finally deduce $u(r)\geq w(r)$ for $0<r<r_1$, which implies $\liminf_{r \to 0}u(r) r^{\frac{2}{p-1}}\geq c>0$.
\medskip

(vi) Let us argue as in the proof of statement (i). Under the assumption $\lim_{r\to 0} u(r)r^{\frac{2}{p-1}}=0$, we have that $u$ satisfies $\mathcal{P}^-_k(D^2u)=g(r)$ with the function $g(r)=u^p-\mu \frac{u}{r^2}$ such that $g(r)<0<g'(r)$ for $r$ sufficiently small. Then, by Lemma \ref{lem1.1}, we deduce that $u'\leq 0$ and $u''-\frac{u'}{r}$ has constant sign in a neighborhood of zero. We observe that if $u''\leq \frac{u'}{r}$, then $u$ is concave, hence  bounded, in contradiction with statement (i). Thus, one has $u''\geq \frac{u'}{r}$ for $r$ small enough.

\end{proof}
 
We are now ready to prove the main result of this section.
\begin{theo}\label{Pk-gamma=2} Let $N-1\geq k\geq 2$, $\mu>0$ and $p>1$ be given.

\begin{enumerate}
\item If $p<1+\frac{2k}{\mu}$ {\rm (}i.e. $\frac{\mu}{k}<\frac{2}{p-1}${\rm )},  then a radial function $u\in {\cal C}^2(B(0,1)\setminus \{0\}$ is a solution of equation \eqref{Pkmsuper} if and only if for $r>0$ sufficiently small $u(r)$ is of the form
\begin{equation}\label{case1}
u(r) =\frac{1}{\left( c\, r^{\frac{\mu (p-1)}{k}}- \frac{r^2}{\frac{2k}{p-1}-\mu}\right)^{\frac{1}{p-1}}}\, , 
\end{equation}
for some $c>0$. In particular, any solution $u$ satisfies 
$$
\lim_{r\to 0} u(r)r^{\frac{\mu}{k}}=c^{\frac{1}{p-1}}>0\, .
$$   
         
 \item  If $p=1+\frac{2k}{\mu}$ {\rm (}i.e. $\frac{\mu}{k}=\frac{2}{p-1}${\rm )},  then a radial function $u\in {\cal C}^2(B(0,1)\setminus \{0\}$ is a solution of equation \eqref{Pkmsuper} if and only if for $r>0$ sufficiently small $u(r)$ is of the form
 \begin{equation}\label{case2}
u(r)= \frac{1}{r^{\frac{2}{p-1}}\left(c+\frac{(p-1)}{k}(-\ln r)\right)^{\frac{1}{p-1}}}\, ,
 \end{equation}
 for some $c\in \R$. In particular, any solution $u$ satisfies 
$$
\lim_{r\to 0} u(r)r^{\frac{2}{p-1}}(-\ln r)^{\frac{1}{p-1}}=\left(\frac{k}{p-1}\right)^{\frac{1}{p-1}} \, .
$$   
 \item  If $p>1+\frac{2k}{\mu}$ {\rm (}i.e. $\frac{\mu}{k}>\frac{2}{p-1}${\rm )},  then a radial function $u\in {\cal C}^2(B(0,1)\setminus \{0\}$ is a solution of equation \eqref{Pkmsuper} if and only if for $r>0$ sufficiently small $u(r)$ is of the form
\begin{equation}\label{case3}
u(r) =\frac{1}{\left( c\, r^{\frac{\mu (p-1)}{k}}+\frac{r^2}{\mu -\frac{2k}{p-1}}\right)^{\frac{1}{p-1}}}\, , 
\end{equation}
for some $c\in \R$. In particular, any solution $u$ satisfies 
$$
\lim_{r\to 0} u(r)r^{\frac{2}{p-1}}=\left(\mu -\frac{2k}{p-1}\right)^{\frac{1}{p-1}}\, .$$            
          
\end{enumerate}
  \end{theo}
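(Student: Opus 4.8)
The strategy is to prove that in each of the three cases any solution $u$ of \eqref{Pkmsuper} necessarily solves the \emph{associated first order equation}
$$
k\,\frac{u'}{r}+\mu\,\frac{u}{r^2}=u^p
$$
in a punctured neighbourhood of the origin, and then to integrate this ODE explicitly; conversely, one checks by a direct computation that every function of the stated form solves \eqref{Pkmsuper} for $r>0$ small. The passage from \eqref{Pkmsuper} to the first order equation rests on the identity ${\cal P}_k^-(D^2u)=k\,u'/r$, which holds as soon as $u'\le0$ and $u''-u'/r\ge0$; so the substantial point is to establish these two sign conditions near $0$.

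\emph{Reduction to the first order regime.} In Cases 1 and 2 this is essentially contained in Lemma \ref{prelprop}: parts (iii)--(iv) immediately give $\lim_{r\to0}u(r)r^{2/(p-1)}=0$ (the extra factor $r^{\,2/(p-1)-\mu/k}$, respectively $(-\ln r)^{-1/(p-1)}$, beats the available bound), and then part (vi) yields $u'\le0$ and $u''-u'/r\ge0$ for $r$ small. In Case 3 one argues slightly differently: first, $\lim_{r\to0}u(r)r^{2/(p-1)}=0$ cannot hold, since by part (vi) it would place $u$ in the first order regime, hence (integrating, as below) $u$ would be of the form \eqref{case3}, whose behaviour at the origin is $u(r)r^{2/(p-1)}\to(\mu-\tfrac{2k}{p-1})^{1/(p-1)}>0$ --- a contradiction. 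Thus $\limsup_{r\to0}u(r)r^{2/(p-1)}>0$, and part (v) then gives $\liminf_{r\to0}u(r)r^{2/(p-1)}>0$ too; together with part (ii) this yields two-sided bounds $c_1r^{-2/(p-1)}\le u(r)\le c_2r^{-2/(p-1)}$ near $0$.

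\emph{Integration.} Whenever $u$ solves $k\,u'/r+\mu\,u/r^2=u^p$ on $(0,r_0)$, the substitution $v=u^{1-p}$ turns it into the linear ODE $v'-\frac{(p-1)\mu}{k}\,\frac{v}{r}=-\frac{p-1}{k}\,r$; multiplying by the integrating factor $r^{-(p-1)\mu/k}$ and integrating --- distinguishing $(p-1)\mu/k\neq2$ (Cases 1 and 3) from $(p-1)\mu/k=2$ (Case 2, where the primitive of $r^{\,1-(p-1)\mu/k}=r^{-1}$ produces the logarithm) --- gives exactly the formulas \eqref{case1}, \eqref{case2}, \eqref{case3}, after rewriting $k\big(2-\tfrac{(p-1)\mu}{k}\big)=(p-1)\big(\tfrac{2k}{p-1}-\mu\big)$. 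Positivity of $u$ then pins down the admissible constant: in Case 1 the leading term near $0$ is $c\,r^{(p-1)\mu/k}$ since $(p-1)\mu/k<2$, forcing $c>0$; in Cases 2 and 3 the respective leading terms $r^2(-\ln r)$ and $r^2/(\mu-\tfrac{2k}{p-1})$ are positive for every $c\in\R$. The stated limits for $u$ are read off from the leading order of each formula. For the converse, one differentiates the candidate explicit function, verifies that it satisfies the first order equation identically and, for $r$ small, also $u'\le0$ and $u''-u'/r\ge0$, whence ${\cal P}_k^-(D^2u)=k\,u'/r$ and $u$ solves \eqref{Pkmsuper} in a punctured neighbourhood of the origin.

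\emph{Main obstacle.} The delicate point is the reduction step in Case 3 \emph{after} $\lim_{r\to0}u(r)r^{2/(p-1)}=0$ has been excluded, because there Lemma \ref{prelprop}(vi) no longer applies: in this regime $u^p$ and $\mu u/r^2$ have the same order, so the sign of ${\cal P}_k^-(D^2u)$ is not immediate. I would close this by a matching argument based on the comparison principle Theorem \ref{comparison}. Fix $r_0>0$ small; for a unique $c\in\R$ the explicit function $U_c(r)=\big(c\,r^{(p-1)\mu/k}+\tfrac{r^2}{\mu-2k/(p-1)}\big)^{-1/(p-1)}$ is positive and smooth on $(0,r_0]$ with $U_c(r_0)=u(r_0)$, and a short computation gives $U_c'\le0$, $U_c''-U_c'/r\ge0$ there, so $U_c$ is itself a solution of \eqref{Pkmsuper} on $(0,r_0]$ with growth comparable to $r^{-2/(p-1)}$. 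Applying Theorem \ref{comparison} once to the pair $(u,U_c)$ and once to $(U_c,u)$ --- the two-sided bounds from the reduction step supplying the required growth hypotheses --- forces $u\equiv U_c$ on $(0,r_0]$, which is \eqref{case3}. The points needing care are the verification of the sign conditions for $U_c$ and the checking that both $u$ and $U_c$ meet the growth assumptions of Theorem \ref{comparison}.
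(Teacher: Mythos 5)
Your proposal is correct and follows essentially the same route as the paper: reduce to the associated first-order ODE via Lemma \ref{prelprop} in Cases 1 and 2, integrate explicitly (the paper invokes uniqueness directly where you use the Bernoulli substitution, but this is the same content), and in Case 3 exclude $\lim_{r\to0}u(r)r^{2/(p-1)}=0$ by the self-contradiction with \eqref{case3}, then invoke parts (ii) and (v) of Lemma \ref{prelprop} to get two-sided $r^{-2/(p-1)}$ bounds and close with a double application of Theorem \ref{comparison} against the matched explicit solution $U_c$. You also correctly identify as the only delicate point the fact that Lemma \ref{prelprop}(vi) gives no information in the non-vanishing regime of Case 3, which is precisely why the paper too switches to the comparison-principle argument there.
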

\begin{proof} \emph{Case 1.}  A direct computation shows that each function $u$ defined in \eqref{case1} is a solution of 
\begin{equation}\label{firstorder}
   k \frac{u'}{r}+\mu \frac{u}{r^2}=u^p
\end{equation}
defined in some right neighborhood of zero. Moreover, it is not difficult to check that $u''\geq \frac{u'}{r}$, hence any such $u$ is a radial solution of equation   \eqref{Pkmsuper} defined in $B(0,1)\setminus \{0\}$ if $c>0$ is chosen large enough.

Conversely, let $u$ be a radial solution of \eqref{Pkmsuper}. Under the current assumption $\frac{\mu}{k}<\frac{2}{p-1}$, by Lemma \ref{prelprop} (iii), we have that   $\lim_{r\to 0}u(r)r^{\frac{2}{p-1}}=0$. Hence, by Lemma \ref{prelprop} (vi), we deduce that $u$ satisfies the first order equation \eqref{firstorder} in an interval $(0,r_0)$ with $r_0<1$ small enough. By uniqueness of solution for equation \eqref{firstorder} we then conclude that $u$ has the expression given by \eqref{case1} with
\begin{equation}\label{c}
c=\frac{r_0^{2-\frac{\mu (p-1)}{k}}}{\frac{2k}{p-1}-\mu}+\frac{1}{\left( u(r_0) r_0^{\frac{\mu}{k}}\right)^{p-1}}\, .
\end{equation}
\medskip

\emph{Case 2.}  We argue as in the previous case, first by observing that functions given by \eqref{case2} are solutions of the first order equation \eqref{firstorder} and they satisfy $u''\geq \frac{u'}{r}$. Hence, formula \eqref{case2} with $c\in \R$ provides solutions of equation \eqref{Pkmsuper}. Conversely, given  any solution $u$ of \eqref{Pkmsuper}, we apply Lemma \eqref{prelprop} (iv) and (vi) in order to deduce that there exists $r_0>0$ sufficiently small such that, for $0<r\leq r_0$, $u$ is of the form \eqref{case2} with
$$
c=\frac{p-1}{k} \ln r_0+ \frac{1}{u(r_0)^{p-1}r_0^2}\, .
$$

\medskip

\emph{Case 3.}  As in the previous cases, a direct computation shows that any function $u$ given by \eqref{case3} is a radial solution of \eqref{Pkmsuper}. 

Conversely, let $u$ be any solution of \eqref{Pkmsuper}. 
We claim that $u$ satisfies
$\limsup_{r\to 0} u(r)r^{\frac{2}{p-1}}>0$. Indeed, if not, then, again by Lemma \ref{prelprop} (vi), we deduce that $u$ is a solution of equation \eqref{firstorder} for $0<r<r_0$, with $r_0>0$ small enough. Then, $u$ has an expression given by \eqref{case3}, but none of these functions satisfies $\lim_{r\to 0} u(r)r^{\frac{2}{p-1}}=0$. 

Hence, Lemma \ref{prelprop} (v) applies, yielding that, for any fixed $0<r_0<1$ there exist  $c_1, \ c_2>0$ depending on $r_0$ such that $u$ satisfies
$$
c_1r^{-\frac{2}{p-1}}\leq u(r) \leq c_2r^{-\frac{2}{p-1}} \quad \hbox{ for } 0<r\leq r_0\, .
$$
Thus, the functions $u$ and 
$$
v(r)= \frac{1}{\left( c\, r^{\frac{\mu (p-1)}{k}}+\frac{r^2}{\mu -\frac{2k}{p-1}}\right)^{\frac{1}{p-1}}}\, ,
$$
both radial solutions of equation \eqref{Pkmsuper} in $B(0,r_0)\setminus \{ 0\}$, can be compared by means of Theorem \ref{comparison}. By choosing $c\in \R$ as in \eqref{c}, we conclude that $u\equiv v$  in $B(0,r_0)\setminus \{ 0\}$.

\end{proof}

    \end{document}